\def\@tocline#1#2#3#4#5#6#7{\relax
	\ifnum #1>\c@tocdepth % then omit
	\else
	\par \addpenalty\@secpenalty\addvspace{#2}%
	\begingroup \hyphenpenalty\@M
	\@ifempty{#4}{%
		\@tempdima\csname r@tocindent\number#1\endcsname\relax
	}{%
		\@tempdima#4\relax
	}%
	\parindent\z@ \leftskip#3\relax \advance\leftskip\@tempdima\relax
	\rightskip\@pnumwidth plus4em \parfillskip-\@pnumwidth
	#5\leavevmode\hskip-\@tempdima
	\ifcase #1
	\or\or \hskip 1em \or \hskip 2em \else \hskip 3em \fi%
	#6\nobreak\relax
	\dotfill\hbox to\@pnumwidth{\@tocpagenum{#7}}\par% <---- \dotfill -> \hfill
	\nobreak
	\endgroup
	\fi}
\numberwithin{equation}{section}
\newtheorem{Thm}{Theorem}[section]
\newtheorem{Lem}[Thm]{Lemma} 
\newtheorem{Prop}[Thm]{Proposition}
\theoremstyle{definition}
\theoremstyle{remark}
\newtheorem{Rmk}{Remark}[section]
\newcommand{\eb}{{\mathrm{e}}}
\newcommand{\PL}{\mathbb{P}_\sigma}
\newcommand{\Norm}[1]{\lVert#1\rVert}
\newcommand{\VNorm}[1]{\left\lVert#1\right\rVert_{V_0}}
\newcommand{\NormXo}[1]{\lVert#1\rVert_{X}}
\newcommand{\NormX}[1]{\lVert#1\rVert_{X}}
\newcommand{\Innerr}[2]{(#1,#2)}
\newcommand{\Abs}[1]{\left|#1\right|}
\newcommand{\loc}{\mathrm{loc}}
\newcommand{\per}{\mathrm{per}}
\newcommand{\dt}{\frac{d}{dt}}
\newcommand{\dtf}[1]{\frac{d#1}{dt}}
\newcommand{\Iht}{\widetilde{I_h}}
\newcommand{\mup}{\mu\nu\lambda_1}
\newcommand{\kt}{k(\nu\lambda_1)^{-1}}
\newcommand{\tunit}{(\nu\lambda_1)^{-1}}
\newcommand{\tdW}{\widetilde{W}}
\newcommand{\ktilde}{\tilde{k}}
\newcommand{\vk}{(w^{(k)},\eta^{(k)})}
\newcommand{\Ik}{\mathcal{I}_k}
\newcommand{\kintv}{[-\kt,T_{**})}
\newcommand{\Tct}{\tilde{\Tc}}
\newcommand{\intave}{\int_{t}^{t+T}}
\newcommand{\intavee}{\int_{t}^{\min(t+T,T_{**})}}
\newcommand{\lam}{\lambda}
\newcommand{\N}{\mathbb{N}}
\newcommand{\R}{\mathbb{R}}
\newcommand{\Tc}{\mathcal{T}}
\newcommand{\As}{\mathscr{A}}
\title{A Determining Form for the 2D Rayleigh-B\'enard Problem}
\author{Yu Cao$^{1}$}
\author{Michael S. Jolly$^{1,\dagger}$}
\address{$\dagger$ corresponding author}
\address{$^1$Department of Mathematics\\
	Indiana University\\ Bloomington, IN 47405}
\author{Edriss S. Titi$^2$}
\address{$^2$Department of Mathematics, Texas A\&M University, 3368 TAMU,
	College Station, TX 77843-3368, USA. 
	Department of Computer Science and Applied Mathematics, Weizmann Institute
	of Science, Rehovot 76100, Israel.
	Department of Applied Mathematics and Theoretical Physics, University of Cambridge,  Cambridge CB3 0WA, UK.
} 
\email[Y. Cao]{cao20@iu.edu}
\email[M. S. Jolly]{msjolly@indiana.edu}
\email[E. S. Titi]{Edriss.Titi@damtp.cam.ac.uk and titi@math.tamu.edu}
\date{\today\  at \currenttime}
\begin{document}
	\subjclass[2010]{
		35Q35, % Other equations arising in fluid mechanics
		37L25, % Inertial manifolds and other invariant attracting sets
		76E60} % convection
	\keywords{Rayleigh-B\'enard convection, determining form, inertial manifold, global attractor}
	
\begin{abstract}
We construct a determining form for the 2D Rayleigh-B\'enard (RB) system in a strip with solid horizontal boundaries, in the cases of no-slip and stress-free boundary conditions.  The determining form is an ODE in a Banach space of trajectories whose steady states comprise the long-time dynamics of the RB system.  In fact, solutions on the global attractor of the RB system can be further identified through the zeros of a scalar equation to which the ODE reduces for each initial trajectory.  The twist in this work is that the trajectories are for the velocity field only, which in turn determines the corresponding trajectories of the temperature.
%We show that the long-time dynamics (global attractor) for the 2D Rayleigh-B\'{e}nard problem, with no-slip or stress-free boundary conditions, can be identified with the steady states of an ordinary differential equation (ODE), called a determining form, in a Banach space of trajectories {that evolve in a finite-dimensional space}. {In fact, solutions on the global attractor can be identified through the zeros of a scalar equation.} 

%Our construction of this ODE is based on interpolant operators on the velocity field alone. It is carried out for a general class of interpolant and two types of boundary conditions for the velocity field, no-slip and stress-free.
\end{abstract}

%title
\maketitle
  \centerline{\it  This paper is dedicated to Ciprian Foias, a great mathematician, generous collaborator and friend,}
  \centerline{\it on the occasion of his 85th birthday.}

%%contents
\setcounter{tocdepth}{2}
\tableofcontents

%%%%%%%%%%%%%%%%%%%%%%%%%%%%%%%%%%%%%%%%%%%%%%%%%%%%%%%%
%%%%%%%%%%%%%%%%%%%%%%%%%%%%%%%%%%%%%%%%%%%%%%%%%%%%%%%%
%%%%%%%%%%%%%%%%%%%%%%%%%%%%%%%%%%%%%%%%%%%%%%%%%%%%%%%%
%\input{CIntroduction}
\section{Introduction}
It was shown in \cite{foias1987attractors} that the long-time dynamics of the
2D Rayleigh-B\'enard (RB) problem  is entirely contained in the {\it global attractor} $\As$, which is a compact finite-dimensional subset of an infinite-dimensional Hilbert space $H$. 
%At large Rayleigh numbers the attractor is observed to include chaotic
%solutions, so the restriction of the flow to $\As$ is not described by an ordinary differential equation (ODE). 
An inertial manifold, if it exists, is a finite-dimensional invariant smooth manifold that contains the global attractor and attracts all the orbits at an exponential rate (see, e.g., \cite{Foias1988Inertial}). The system obtained by restriction to an inertial manifold is called an \emph{inertial form}. It is a finite-dimensional system of ODEs which reproduces the dynamics of the original system. While the existence of the inertial manifolds has been established for a considerable number of dissipative systems (see, e.g., \cite{temam2012infinite, constantin2012integral,Paret1988Inertial,foiasNicolaenko1988inertial} and references therein), it has been an open problem since the 1980s for the 2D Navier-Stokes equations (NSE), and hence for the 2D RB problem as well.

The 2D NSE and 2D RB problem do enjoy a finite number of {\it determining parameters} (see, e.g., \cite{Foiac-s1967Sur,Jones1993Upper,foias2001navier,Cockburn1995determining}). 
For instance, in the case of determining Fourier modes, if two complete trajectories in the global attractor coincide upon projection $P_m$ on a sufficiently large number $m$ of low Fourier modes, then they must be the same (see, e.g.,  \cite{Foiac-s1967Sur,Jones1993Upper,foias2001navier,Cockburn1995determining}). Thus it is natural to expect the existence of a lifting map $W:P_m \As \to \As$.  This property inspired the notion of a \emph{determining form}, introduced in \cite{foias2012determining}. {A determining form is an
	ODE in an infinite-dimensional Banach space of trajectories that captures the dynamics of the original system in a certain way. Rather than being a dimension reduction, as is the case for the inertial form, the determining form trades the infinite-dimensionality of physical space for that of time; the elements in its phase space are trajectories.  It is an ODE in that it is represented by a globally Lipschitz vector field.}

There are currently two approaches to constructing a determining form.  The key step in either case is to extend the domain of the lifting map $W$ to a Banach space $X$ of projected trajectories.  
The determining form constructed here is based on the nudging approach to continuous data assimilation (see \cite{Azouani2013feedback,azouani2014continuous}).  It is given by
\begin{align}\label{detform2}
	\frac{d v}{ds}=-\|v-I_hW(v)\|^2_X (v-I_hu^*)
\end{align}
where $u^*$ is some steady state of the original system, and $\|\cdot\|_X$ is a sup norm on a Banach space of trajectories that evolve in the finite-dimensional range of some interpolant operator $I_h$.     Note that the evolutionary variable is now $s\in\R$, not time.
The trajectories in the global attractor of the original system are precisely the steady states ($s$-independent solutions) of \eqref{detform2}.  %(In an earlier approach \cite{foias2012determining}, the attractor is identified through traveling waves.) 
To show that \eqref{detform2} is an ODE in the true sense boils down to proving that the mapping $W$ is globally
Lipschitz on a ball in $X$, big enough to accomodate $I_h \As$. In addition to the 2D NSE  (see \cite{foias2014unified}), this recipe has been carried out for the damped-driven nonlinear Schr\"odinger, damped-driven Korteweg--de Vries, and surface quasigeostrophic equations (see \cite{Jolly2015determiningNLS, Jolly2017DeterminingKdV, Bai2017DetForm,Jolly2018DeterminingSQG,Biswas2018Down}),  {each with particular treatment and subtle twists in the analysis.} This general procedure is developed in detail in Section \ref{sec3}.

In this paper we construct a determining form for the Rayleigh-B\'enard problem.  The novelty here is that the phase space $X$ corresponds to projections of the velocity field alone. Still, both velocity and temperature of all trajectories in the global attractor of the 2D RB problem are identified through steady states of the determining form.  This is the first such construction where the trajectories are in a subset of the system state variables.  This was suggested in the context of data assimilation by
\cite{Farhat2015continuous,Farhat2017} where it was proved that coarse velocity data alone is sufficient to synchronize with a reference solution of the RB problem. The key difficulty in establishing the crucial Lipschitz property of the lifting map $W$ is in getting \emph{a priori} estimates that are independent of the nudging parameter. Doing this with nudging only in the velocity component adds an extra challenge.

We treat both no-slip and stress-free boundary conditions for the velocity field.   Different analysis is needed for each case.  In the stress-free case, {the problem is equivalent to a periodic boundary condition problem in an extended domain with particular symmetries, which allows us to eliminate one of the nonlinear terms in the estimates.}  On the other hand, we do not in this case have the Poincar\'e inequality for (the first component of ``velocity'') $w$, which is worked around by combining estimates of several norms. We observe that similar techniques are used in \cite{Cao2018Algebraic} to obtain sharper bounds on the size of the global attractor $\As$ in the case of stress-free boundary conditions than previously known.

%%%%%%%%%%%%%%%%%%%%%%%%%%%%%%%%%%%%%%%%%%%%%%%%%%%%%%%%
%%%%%%%%%%%%%%%%%%%%%%%%%%%%%%%%%%%%%%%%%%%%%%%%%%%%%%%%
%%%%%%%%%%%%%%%%%%%%%%%%%%%%%%%%%%%%%%%%%%%%%%%%%%%%%%%%
%\input{CPrelim.tex}
\section{Notation and Preliminaries}
Under a similar change of variables as in \cite{foias1987attractors}, the 2D RB problem in an infinite strip $\{(x_1,x_2): 0<x_2<l\}$ with solid boundaries at $x_2=0$ and $x_2=l$, can be written as
%
%We consider the two-dimensional B\'{e}nard convection problem on the domain $\Omega=(0,L)\times(0,l)$ (see \cite{foias1987attractors})
\begin{subequations}\label{eq-boussi0}
	\begin{gather}
		\frac{\partial u}{\partial t}
		-\nu\Delta u+(u\cdot\nabla)u+\nabla p
		=g\theta \eb_2,\\
		\frac{\partial\theta}{\partial t}-\kappa\Delta\theta+(u\cdot\nabla)\theta
		=\frac{u\cdot \eb_2}{l},\\
		\nabla\cdot u=0,\\
		u(0;x)=u_0(x),\quad\theta(0;x)=\theta_0(x),
	\end{gather}
\end{subequations}
where $g$ denotes the gravitational acceleration. Unlike \cite{foias1987attractors}, we retain the dimension of the velocity $u$ while the temperature fluctuation $\theta$ is dimensionless. In this paper, we consider the following two sets of boundary conditions of physical interest.

No-slip:
%\begin{subequations} 
\begin{align*}
	\textrm{in the $x_2$-variable: }
	&u,\theta=0\ \text{at $x_2=0$ and $x_2=l$,}
	%	&\frac{\partial u_1}{\partial x_2}=0\ \text{at $x_2=0$ and $x_2=l$,}
	\notag\\
	\textrm{in the $x_1$-variable: }
	&u,\theta,p\ \text{ are of periodic }L.
\end{align*}
%\end{subequations}

Stress-free:
\begin{subequations} 
	\begin{align*}
		\textrm{in the $x_2$-variable: }
		&\frac{\partial u_1}{\partial x_2}, u_2,\theta=0\ \text{at $x_2=0$ and $x_2=l$,}\\
		\textrm{in the $x_1$-variable: }
		&u,\theta,p\ \text{ are of periodic }L.
	\end{align*}
\end{subequations}

\subsection{Function spaces}

We will use the same notation indiscriminately for both scalar and vector Lebesgue and Sobolev spaces, which should not be a source of confusion.

We denote
\begin{gather*}
	(u,v):=\int_{\Omega}u\cdot v\,,
	\ \ |u|:=(u,u)^{1/2}\,,
	\quad \text{for } u,v\in L^2(\Omega)\,,
	\\
	((u,v)):=\int_{\Omega}\nabla u\cdot\nabla v\,,
	\ \ \Norm{u}:=((u,u))^{1/2}\,,
	\quad \text{for } \nabla u,\nabla v\in L^2(\Omega)\,,
\end{gather*}
for a domain $\Omega$ that will be specified for each case of boundary conditions.

\subsubsection{No-slip BCs}

We define function spaces corresponding to the no-slip boundary conditions as in \cite{Farhat2015continuous}. Let $\Omega=\Omega_0:=(0,L)\times(0,l)$ and $\mathcal{F}$ be the set of $C^\infty(\Omega)$ functions, which are trigonometric polynomials in $x_1$ with period $L$, and compactly supported in the $x_2$-direction. 

Denote the space of smooth vector-valued functions which incorporates the divergence-free condition by
\begin{align*}
	\mathcal{V}:=\{u\in\mathcal{F}\times\mathcal{F}:
	\nabla\cdot u=0\}\,,
\end{align*}
and the closures of $\mathcal{V}$ and $\mathcal{F}$ in $L^2(\Omega)$ by $H_0$ and $H_1$, respectively, which are endowed with the usual inner products
and associated norms
\begin{align}\label{Hspace}
	(u,v)_{H_0}:=(u,v)\,,\quad
	(\psi,\phi)_{H_1}:=(\psi,\phi)\,,\quad
	\Norm{u}_{H_0}:=(u,u)^{1/2}\,,\quad 
	\Norm{\psi}_{H_1}:=(\psi,\psi)^{1/2}\,.
\end{align}
The closures of $\mathcal{V}$ and $\mathcal{F}$ in $H^1(\Omega)$ will be denoted by $V_0$ and $V_1$, respectively, endowed with the inner products and associated norms
\[
((u,v))_{V_0}:=
%\frac{1}{|\Omega|}(u,v)
((u,v))\,,\quad
((\psi,\phi))_{V_1}:=((\psi,\phi))\,,
\quad
\Norm{u}_{V_0}:=\Norm{u}\,,\quad
\Norm{\phi}_{V_1}:=\Norm{\phi}\,.
\]

\subsubsection{Stress-free BCs}
Following \cite{Farhat2017}, we consider the equivalent formulation of the 2D RB problem \eqref{eq-boussi0} subject to the fully periodic boundary conditions on the extended domain $\Omega=(0,L)\times(-l,l)$ with the following special spatial symmetries: for $(x_1,x_2)\in\Omega$,
%\begin{quote}
%	for $(x_1,x_2)\in (0,L)\times(-l,0)$,
\begin{align*}
	u_1(x_1,x_2)&=u_1(x_1,-x_2)\,,\quad u_2(x_1,x_2)=-u_2(x_1,-x_2)\,,\\
	p(x_1,x_2)&=p(x_1,-x_2)\,,\quad\quad\theta(x_1,x_2)=-\theta(x_1,-x_2)\,.
\end{align*}
{Observe that for $(x_1,x_2)\in\Omega$ with $x_2=-l,0,l$, and for smooth enough functions one has
	\begin{align*}
		\frac{\partial u_1}{\partial x_2}, u_2,\theta=0\,,
	\end{align*}
	that is, one recovers the original corresponding physical boundary conditions when restricted to the physical domain $\Omega_0$.}
%Therefore, by solving the periodic problem on $\Omega$ with above symmetries and restricting it to the sub-domain $\Omega_0=(0,L)\times(0,l)$ gives, by the uniqueness, the solution to the original physical problem, i.e., with the physical boundary conditions.
%\end{quote}

We define function spaces corresponding to the ``stress-free'' boundary conditions, i.e., the periodic BCs with the above symmetries, as in \cite{Farhat2017}, where
\begin{quote}
	$\mathcal{F}_1$ is the set of trigonometric polynomials in $(x_1,x_2)$, with period $L$ in the $x_1$-variable, that are even, with period $2l$, in the $x_2$-variable,
\end{quote}
and
\begin{quote}
	$\mathcal{F}_2$ is the set of trigonometric polynomials in $(x_1,x_2)$, with period $L$ in the $x_1$-variable, that are odd, with period $2l$, in the $x_2$-variable.
\end{quote}
The symmetries of the two velocity components lead us to take in the stress-free case
%Denote the space of smooth vector-valued functions which incorporates the divergence-free condition by
\begin{align*}
	\mathcal{V}:=\{u\in\mathcal{F}_1\times\mathcal{F}_2:
	\nabla\cdot u=0\}\,.
\end{align*}
The space $H_0$ will again be the closure of $\mathcal{V}$ in $L^2(\Omega)$, but $H_1$ shall be that of $\mathcal{F}_2$ in $L^2(\Omega)$, with inner products and norms as in \eqref{Hspace}.
%\[
%(u,v)_{H_0}:=(u,v),\quad
%(\psi,\phi)_{H_1}:=(\psi,\phi)
%\]
%and the associated norms 
%\[
%\Norm{u}_{H_0}:=(u,u)^{1/2},\quad 
%\Norm{\psi}_{H_1}:=(\psi,\psi)^{1/2}.
%\]

Similarly, we denote the closures of $\mathcal{V}$ and $\mathcal{F}_2$ in $H_{\per}^1(\Omega)$ by $V_0$ and $V_1$, respectively, but with the inner products
\[
((u,v))_{V_0}:=\frac{1}{|\Omega|}(u,v)+((u,v))\,,
\quad
((\psi,\phi))_{V_1}:=((\psi,\phi))\,,
\]
and associated norms
\[
\Norm{u}_{V_0}:=\left(\frac{1}{|\Omega|}|u|^2+\Norm{u}^2\right)^{1/2}\,,
\quad
\Norm{\phi}_{V_1}:=\Norm{\phi}\,.
\]

\subsection{The linear operators $A_i$}\label{opA}
\subsubsection{No-slip BCs}
Let $A_i:D(A_i)\to H_i$ ($i=0,1$) be the unbounded linear operators defined by
\begin{align*}
	(A_iu,v)_{H_i}=((u,v))_{V_i},\quad i=0,1,\quad \forall\ u,v\in D(A_i)\;,
\end{align*}
where  $D(A_0)=V_0\cap H^2(\Omega)$ and $D(A_1)=V_1\cap H^2(\Omega)$. 

%\begin{Rmk}
For each $i=0,1$, the operator $A_i$ is self-adjoint and $A_i^{-1}$ is a compact, positive-definite, self-adjoint linear operator in $H_i$. There exists a complete orthonormal set of eigenfunctions $(\zeta_{i,j})_{j=1}^\infty$ in $H_i$ such that $A_i\zeta_{i,j}=\lambda_{i,j}\zeta_{i,j}$ where
\[
0<\lambda_{i,1}\leqslant\lambda_{i,2}
\leqslant\cdots\leqslant \lambda_{i,m}\leqslant
\cdots,
\]
%	associated with an orthonormal basis $\{w_{1,m}\}_{m\in\N}$ of $H_1$. 
Observe that we have the following Poincar\'{e} inequalities:
\begin{align}
	|\phi|^2
	\leqslant \lambda_1^{-1}\Norm{\phi}^2,\quad
	&\forall \,  \phi \in V_i,\\
	\Norm{\phi}^2
	\leqslant \lambda_1^{-1}|A_1\phi|^2,\quad
	&\forall \,  \phi\in D(A_i),
\end{align}
where $\lambda_1:=\lambda_{1,1}=\lambda_{2,1}$. 
%In this paper, we define $\kappa_0=\lambda_1^{1/2}$.
%\end{Rmk}

\begin{Rmk}
	We observe that in this case $|A_0\phi|$ is equivalent to $\Norm{\phi}_{H^2}$ for every $\phi\in D(A_0)$.
\end{Rmk}

\subsubsection{Stress-free BCs}
Let $A_i:D(A_i)\to H_i$ ($i=0,1$) be the unbounded linear operators defined by $A_i=-\Delta$, where $D(A_0)=V_0\cap H^2(\Omega)$ and $D(A_1)=V_1\cap H^2(\Omega)$. 

%\begin{align}
%(A_i\phi,\psi)_{H_i}=((\phi,\psi)),\quad \phi,\psi\in D(A_i).
%\end{align}

%\begin{Rmk}
%	Due to periodic boundary conditions, the operator $A_i=-\Delta$.
%\end{Rmk}

\begin{Rmk}
	The operator $A_0$ is a nonnegative operator and possesses a sequence of eigenvalues with
	\[
	0=\lambda_{0,1}<\lambda_{0,2}
	\leqslant\cdots\leqslant \lambda_{0,m}\leqslant
	\cdots,
	\]
	associated with an orthonormal basis $\{\zeta_{0,m}\}_{m\in\N}$ of $H_0$. The operator $A_1$ is a positive self-adjoint operator and possesses a sequence of eigenvalues with 
	\[
	0<\lambda_{1,1}\leqslant\lambda_{1,2}
	\leqslant\cdots\leqslant \lambda_{1,m}\leqslant
	\cdots,
	\]
	associated with an orthonormal basis $\{\zeta_{1,m}\}_{m\in\N}$ of $H_1$. Observe that we have the Poincar\'{e} inequality for temperature:
	\begin{align}
		|\theta|^2
		\leqslant \lambda_1^{-1}\Norm{\theta}^2,\quad
		&\forall \,  \theta \in V_1,\\
		\Norm{\theta}^2
		\leqslant \lambda_1^{-1}|A_1\theta|^2,\quad
		&\forall \,  \theta\in D(A_1),
	\end{align}
	where $\lambda_1=\lambda_{1,1}$. 
	%	In this paper, we define $\kappa_0=\lambda_1^{1/2}$.
\end{Rmk}
\begin{Rmk}
	In the stress-free case, we do not have the Poincar\'{e} inequality for functions in $V_0$, but we have
	\begin{align}\label{u-Poin}
		|u|^2\leqslant |\Omega|\VNorm{u}^2,\quad\forall \, u\in V_0
	\end{align}
	by the definition of the norm $\VNorm{\cdot}$.
\end{Rmk}

\begin{Rmk}\label{rmkreg}
	By the elliptic regularity of the operator $A_0+I$ (see \cite[Remark 2.3]{Farhat2017}), we have in the stress-free case the equivalency
	%	\begin{align*}
	%		\Norm{u}_{H^2}\equiv
	%		\frac{1}{|\Omega|}\Norm{u}_{L^2}+\Norm{A_0u}_{L^2}.
	%	\end{align*}
	%	In particular, there exists an absolute constant $c_E$ such that
	\begin{align}\label{elliptic-reg}
		\widetilde{c_E}^2
		\bigg(
		\frac{1}{|\Omega|}\Norm{u}_{L^2}+\Norm{A_0u}_{L^2}
		\bigg)\leqslant
		\Norm{u}_{H^2}\leqslant
		c_E^2
		\bigg(
		\frac{1}{|\Omega|}\Norm{u}_{L^2}+\Norm{A_0u}_{L^2}
		\bigg),
		\quad \forall\ u\in D(A_0).
	\end{align}
\end{Rmk}

\subsection{The bilinear maps $B_i$}
Denote the dual space of $V_i$ by $V_i'$ ($i=0,1$). Define the bilinear map $B_0: V_0\times V_0\to V_0'$ (and the trilinear map $b_0: V_0\times V_0\times V_0'\to\R$) by the continuous extension of 
\[
b_0(u,v,w):=\langle B_0(u,v),w\rangle_{V_0'}=((u\cdot\nabla)v,w), \quad
u,v,w\in\mathcal{V}.
\]
\subsubsection{No-slip BCs}
Define the scalar analogue $B_1:V_0\times V_1\to V_1'$ (and the trilinear map $b_1: V_0\times V_1\times V_1'\to\R$) by the continuous extension of 
\[
b_1(u,\theta,\phi):=\langle B_1(u,\theta),\phi\rangle_{V_1'}=((u\cdot\nabla)\theta,\phi), \quad
u\in\mathcal{V},\ \ \theta,\phi\in\mathcal{F}.
\]

%{\tt (In the two data assimilation papers, the angle brackets are never used again after the definition are given. One should explain that why they become the round parentheses later.)}

The bilinear maps $B_i$ (and the trilinear maps $b_i$), $i=0,1$, have the orthogonality property: 
\begin{align}\label{ns-orth}
	b_0(u,v,v)=0,\quad
	b_1(u,\theta,\theta)=0,\quad u,v\in V_0,\  \theta\in V_1.
\end{align}

\subsubsection{Stress-free BCs}
%Denote the dual space of $V_i$ by $V_i'$ ($i=0,1$). Define the bilinear map $B_0: V_0\times V_0\to V_0'$ (and the trilinear map $b_0: V_0\times V_0\times V_0'\to\R$) by the continuous extension of 
%%\footnote{Unless one wants to define directly $B(u,v):=P(u\cdot\nabla)v$, we don't need the Leray projection $P$ inside the inner product? I have checked the definition in  \cite{farhat2015continuous,Farhat2017,foias1987attractors}.}
%\[
%b_0(u,v,w):=\langle B_0(u,v),w\rangle_{V_0'}=((u\cdot\nabla)v,w), \quad
%u,v,w\in\mathcal{V}.
%\]
Define the scalar analogue $B_1:V_0\times V_1\to V_1'$ (and the trilinear map $b_1: V_0\times V_1\times V_1'\to\R$) by the continuous extension of 
\[
b_1(u,\theta,\phi):=\langle B_1(u,\theta),\phi\rangle_{V_1'}=((u\cdot\nabla)\theta,\phi), \quad
u\in\mathcal{V},\ \ \theta,\phi\in\mathcal{F}_2.
\]

The bilinear maps $B_i$ (and the trilinear maps $b_i$), $i=0,1$, have the same orthogonality property \eqref{ns-orth} as in the no-slip case. Furthermore, we have for each $u\in D(A_0)$, 
\begin{align}
	b_0(u,u,A_0u)=0\,,
\end{align}
which is not true in general in the no-slip case.

\subsection{Functional setting and bounds for the global attractor}
Following \cite{foias1987attractors}, we have the functional form of the RB problem (\ref{eq-boussi0}):
\begin{subequations}\label{eq-benardfn0}
	%	\noeqref{eq:1,eq:2,eq:3}
	\begin{gather}
		\label{eq:1}
		\frac{du}{dt}+\nu A_0u+B_0(u,u)=\PL(g\theta \eb_2),\\
		\label{eq:2}
		\frac{d\theta}{dt}+\kappa A_1\theta+B_1(u,\theta)=\frac{u\cdot \eb_2}{l},\\
		\label{eq:3}
		u(0;x)=u_0(x),\quad\theta(0;x)=\theta_0(x),
	\end{gather}
\end{subequations}
where $\PL$ denotes the Helmholtz-Leray projector from $L^2(\Omega)$ onto $H_0$.
%with initial conditions
%\begin{align}
%	\label{eq-benardfn3}
%	u(0;x)=u_0(x),\quad\theta(0;x)=\theta_0(x).
%\end{align}

\subsubsection{No-slip BCs}
It is shown in \cite{foias1987attractors} that the RB system \eqref{eq-boussi0} with no-slip boundary conditions has a global attractor
\begin{align}\label{attr}
	\As=\{(u_0,\theta_0)\in H_0\times H_1: \exists\,\textrm{a unique solution } (u,\theta)(t;u_0,\theta_0) \textrm{ of \eqref{eq-boussi0} for all $t\in\R$ }\\ \notag \textrm{and }\sup_t(\VNorm{u(t)}+\Norm{\theta(t)}_{V_1})<\infty \}\,.
\end{align}
Alternatively, $\As$ is the maximal bounded invariant subset of $V_0\times V_1$ under the dynamics of (\ref{eq-benardfn0}).   
Moreover, there exists some (dimensional) constants $J_i>0$, $i=1,2$, such that  
\begin{align}\label{attractor-bnd-ns}
	\sup_{t\in\R}\VNorm{u(t)}\leqslant J_1, 
	\quad 
	\sup_{t\in\R}\Norm{u(t)}_{H^2}\leqslant J_2,
	\quad\forall \, (u,\theta)\in\As.
\end{align}
Henceforth, lowercase letters $c_L, c_A,  c_i, \cdots$ will denote universal dimensionless positive constants; uppercase letters $C,J_i,K,K_i,\cdots$ will denote positive dimensional constants that {depend on the physical parameters.}

\subsubsection{Stress-free BCs}
The case of stress-free boundary conditions is studied further in \cite{Cao2018Algebraic}. With the stress-free boundary conditions, the RB system has steady states with arbitrarily large $L^2$-norms:
\[
u(x)=(c,0),\quad \theta(x)=0,\quad c\in\R,
\]
which means that the system is not dissipative. {However, since (see also \cite{Cao2018Algebraic}) 
	\[
	\dt\int_{\Omega}u(x,t)\ dx=0\,,
	\]
	we may assume in the stress-free case that the velocity field has a fixed average:
	\begin{align}\label{space_average}
		\int_{\Omega}u(x,t)\ dx=a,\quad\forall\ t\in\R,
	\end{align}
	where $a\in \R$ is fixed. Observe that the spatial average is conserved and the system is dissipative within each invariant affine space of fixed average $a$.} It is shown in \cite{Cao2018Algebraic} that the RB system has a global attractor $\As=\As_a$, in each affine subspace of $V_0\times V_1$ where the spatial average \eqref{space_average} of velocity is fixed.  Moreover, there exist some (dimensional) constants $J_i=J_i(a)>0$, $i=1,2$, such that \eqref{attractor-bnd-ns} holds.
%    for every $(u,\theta)\in\As$, 
%\begin{align}\label{attractor-bnd}
%	\VNorm{u(t)}\leqslant J_{1}, 
%	\quad 
%	\Norm{u(t)}_{H^2}\leqslant J_{2},
%	\quad\forall \, t\in\R.
%\end{align}
In this case of stress-free boundary conditions, the dependence of $J_i$, $i=1,2$, is shown in \cite{Cao2018Algebraic} to be algebraic in the physical parameters $\nu$, $\kappa$, $l$ and $L$. To be specific, we will take $a=0$.

%%%%%%%%%%%%%%%%%%%%%%%%%%%%%%%%%%%%%%%%%%%%%%%%%%%%%%%%
%%%%%%%%%%%%%%%%%%%%%%%%%%%%%%%%%%%%%%%%%%%%%%%%%%%%%%%%
%%%%%%%%%%%%%%%%%%%%%%%%%%%%%%%%%%%%%%%%%%%%%%%%%%%%%%%%
%\input{CDetFormMainTHM.tex}
\section{Determining Form and Main Results}\label{sec3}
In order to define the determining form, we need the notion of interpolant operators.
\subsection{Interpolant operators}
We recall  a general class of interpolant operators introduced in \cite{azouani2014continuous,Azouani2013feedback} for dealing
with various determining parameters such as modes, nodes, volume elements, etc. These operators are finite-rank operators (bounded, linear and with finite-dimensional range) and are required to
satisfy an approximation of identity type condition.

%In this paper, we consider two types of interpolant operators:
%\begin{quote}
A finite-rank operator $I_h:H_{}^1(\Omega)\to H_{}^1(\Omega)$ is a \emph{Type I interpolant operator} if it satisfies
\begin{gather}\label{intplt-t1}
	\Abs{\varphi-I_h(\varphi)}
	\leqslant c_0 h\|\varphi\|_{H_{}^1},\quad\forall\, \varphi\in H_{}^1\,;\\
	%	I_h(H^1(\Omega))\subset H^1(\Omega)\,;\\
	\Norm{\varphi-I_h(\varphi)}_{H^1}\leqslant
	\tilde{c_0}\Norm{\varphi}_{H_{}^1},\quad \forall\, \varphi\in H_{}^1\,.
\end{gather}
A finite-rank operator $I_h:H^2(\Omega)\to H^1(\Omega)$ is a \emph{Type II interpolant operator} if it satisfies
\begin{gather}\label{intplt-t2}
	|\varphi-I_h(\varphi)|
	\leqslant 
	c_1h\Norm{\varphi}_{H_{}^1}
	+
	c_2h^2\Norm{\varphi}_{H_{}^2},\quad\forall\, \varphi\in  H_{}^2\;;\\
	\label{int-t2}
	\Norm{\varphi-I_h(\varphi)}_{H^1}\leqslant
	\tilde{c_1}\Norm{\varphi}_{H_{}^1}
	+
	\tilde{c_2}h\Norm{\varphi}_{H_{}^2},\quad\forall\, \varphi\in  H_{}^2.
\end{gather}
%\end{quote}
%In the case of a Type I interpolant, we assume in addition that $I_h(H^1(\Omega))\subset H^1(\Omega)$ and that
%\begin{align}\label{int-t1}
%\Norm{\varphi-I_h(\varphi)}\leqslant
%\tilde{c_0}\Norm{\varphi}_{H^1},\quad \forall\, \varphi\in H^1.
%\end{align}
%In the case of a Type II interpolant, we assume in addition that
%$I_h(H^2(\Omega))\subset H^2(\Omega)$ and that
%\begin{align}\label{int-t2}
%\Norm{\varphi-I_h(\varphi)}\leqslant
%\tilde{c_1}\Norm{\varphi}_{H^1}
%+
%\tilde{c_2}h\Norm{\varphi}_{H^2},\quad\forall\, \varphi\in  H^2.
%\end{align}

%\begin{Rmk}\color{red}
%	Note that we state the conditions for the interpolant operators in the case of no-slip boundary conditions. For the stress-free case, we replace the Sobolev spaces $L^2, H^k$ with their periodic counterparts, $L^2_{\per}, H^k_{\per}$. [To be deleted if such remark is not necessary. Alternatively, we could state in the prelim section that we always consider periodic Sobolev spaces in the case of fully periodic BC and the notation $H^k$ should be understood with context so that we can avoid using $H^k_{\per}$.]
%\end{Rmk}

In this paper, we construct a determining form for the RB system using Type II interpolants. The same can be done under slightly weaker assumptions on $h$ for Type I interpolants (see \cite{CaoThesis}). 

\begin{Rmk}
	The orthogonal projection onto low Fourier modes, those with
	wave numbers $k$ such that $|k| \leqslant 1/h$, is one example of a Type I interpolant.
	Another is finite volume elements.  In addition, an example of a Type II interpolant is an interpolant operator that is based on nodal values satisfying (\ref{intplt-t2}) and \eqref{int-t2}. See, e.g.,  \cite{azouani2014continuous} for more details.
\end{Rmk}

\begin{Rmk}\label{rmksf}
	%	Note that we do not have the Poincar\'{e} inequality here to simplify the assumption (\ref{intplt-t1}) to
	%	\[
	%	|\varphi-I_h\varphi|\leqslant c_3h\Norm{\varphi}.
	%	\]
	In the stress-free case, by definition, we have  $\Norm{\varphi}_{H^1}=\VNorm{\varphi}$, for $\varphi\in V_0$. Moreover, by \eqref{elliptic-reg} in Remark \ref{rmkreg}, replacing the absolute constants when necessary, we can replace $\Norm{\varphi}_{H^2}$ by $|A_0\varphi|$ in \eqref{intplt-t2} and \eqref{int-t2}, for $\varphi\in D(A_0)$.
\end{Rmk}

We need to modify the interpolant operator $I_h$ so that its has a range of functions that are divergence-free and satisfy the boundary conditions. Motivated by \cite[Proposition 2.1]{Celik2018Spectral}, we define the modified Type II interpolant operator
$\Iht: H^2\to V_0$ as
\begin{align}\label{E1}
	\Iht:=P_r I_h,\quad 
	P_r\phi=\sum_{i=1}^r (\phi,\zeta_{0,i})\zeta_{0,i},
	\quad 
	h^2\sim \frac{1}{\lambda_{0,r}},
	%	&\textrm{no-slip:}\quad
	%	\Iht:=P_m\PL I_h,\quad 
	%	P_m\phi=\sum_{i=1}^m (\phi,\zeta_{0,i})\zeta_{0,i},
	%	\quad 
	%	h^2\sim \frac{1}{\lambda_m} ;\\
	%	&\textrm{stress-free:}\quad 
	%	\Iht:=P_m I_h,\quad 
	%	P_m\phi=\sum_{i=1}^m (\phi,\zeta_{0,i})\zeta_{0,i},
	%	\quad 
	%	h^2\sim \frac{1}{\lambda_m}.
\end{align}
where we recall that $\{\zeta_{0,i}\}$ are the eigenfunctions of the operator $A_0$ in Section \ref{opA}.
The phase space $(X,\Norm{\cdot}_X)$ of our determining form is then defined as
\begin{align}\label{Xnorm}
	X:=C_b(\R;\Iht H^2),\quad \Norm{v}_X:=\frac{\sup_{t\in\R}\VNorm{v(t)}}{\nu\lambda_1^{1/2}}\, .
\end{align}

\begin{Rmk}\label{Iht}
	Based on the proof in \cite[Proposition 2.1]{Celik2018Spectral}, we observe that $\Iht$ satisfies conditions \eqref{intplt-t2} and \eqref{int-t2} with modified constants $c_i, \tilde{c_i}$, $i=1,2$. Furthermore, in the no-slip case, by the Poincar\'e inequality, modifying the constants $c_i$ when necessary, we have
	\begin{align}\label{mint}
		|\varphi-\Iht(\varphi)|
		\leqslant 
		c_1 h\Norm{\varphi}_{V_0}
		+
		c_2 h^2\Abs{A_0\varphi},\quad\forall\, \varphi\in  D(A_0).
	\end{align}
	We also have \eqref{mint} for the stress-free case by Remark \ref{rmksf}.
\end{Rmk}

\subsection{Auxiliary system and  determining map}
Consider the following auxiliary system:
\begin{subequations}\label{eq-aux0}
	\begin{align}
		\frac{dw}{dt}+\nu A_0w+B_0(w,w)
		&=\PL(g\eta \eb_2)-\mup (\Iht w-v),\label{eq-aux}\\
		\frac{d\eta}{dt}+\kappa A_1\eta+B_1(w,\eta)
		&=\frac{w\cdot \eb_2}{l},\label{eq-aux2}
		%	w(0)=w_0,\quad&\eta(0)=\theta_0,
	\end{align}
\end{subequations}
where $v\in B_X(0,\rho):=\{\xi\in X: \Norm{\xi}_X<\rho \}$ with $\rho>0$ and $\Iht$ is a (modified) Type II interpolant operator.  Note that the nudging term in \eqref{eq-aux0} appears only in the momentum equation.

\begin{Prop}[Solutions to the auxiliary system]
	\label{Wmap-Exst-t1}
	Let $\rho$ be a positive real number. Let $\mu>0$ be sufficiently large and $h>0$ sufficiently small (see conditions in Section \ref{SecPropAuxNoSlip}). 
	Then for each $v\in B_X(0,\rho)$, system (\ref{eq-aux0}) has a unique bounded solution $(w(t),\eta(t))$ that exists for all $t\in\R$ such that
	\begin{align}\label{aux-soln}
		(w,\eta)\in C_b(\R,V_0\times V_1)\cap L_{\loc}^2(\R,D(A_0)\times D(A_1)),\quad
		\bigg(\dtf{w},\dtf{\eta}\bigg)\in L_{\loc}^2(\R,H_0\times H_1)\,.
	\end{align}
\end{Prop}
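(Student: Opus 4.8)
The plan is to establish existence and uniqueness of a global bounded solution to the auxiliary system \eqref{eq-aux0} via the standard Galerkin scheme combined with \emph{a priori} estimates that are uniform in $t$, following the template used for nudging-based continuous data assimilation (see \cite{Azouani2013feedback,azouani2014continuous,Farhat2015continuous,Farhat2017}). First I would set up finite-dimensional Galerkin approximations $(w_m,\eta_m)$ in the span of the first $m$ eigenfunctions of $A_0$ and $A_1$ respectively, so that local-in-time existence of the ODE system is immediate by Picard--Lindel\"of. The main work is to derive bounds, independent of $m$, on $\VNorm{w_m(t)}$, $\Norm{\eta_m(t)}_{V_1}$, on $\int |A_0 w_m|^2$ and $\int |A_1\eta_m|^2$ over bounded intervals, and on the time derivatives in $L^2_{\loc}$, which simultaneously rule out finite-time blow-up (giving global existence of the approximations) and, via Banach--Alaoglu and Aubin--Lions compactness, allow passage to the limit to produce a solution with the regularity \eqref{aux-soln}.

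The heart of the matter is the energy estimates. For \eqref{eq-aux} I would test with $w$ and use $b_0(w,w,w)=0$ from \eqref{ns-orth}, so that the nonlinear term drops and one is left with controlling $\mup(\Iht w - v, w)$; writing $\Iht w - v = (\Iht w - w) + (w - v)$ and using the interpolation estimate \eqref{mint} together with Young's inequality, the term $\mup(\Iht w - w, w)$ is absorbed by $\tfrac{\nu}{2}|A_0 w|^2$-type dissipation provided $h$ is small enough (this is precisely where the smallness condition on $h$ and largeness condition on $\mu$ enter), while $\mup(w-v,w)$ is handled by Cauchy--Schwarz against $\Norm{v}_X \le \rho$. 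In the stress-free case one must also invoke \eqref{u-Poin} in place of the missing Poincar\'e inequality on $V_0$, combining the $|w|^2$ and $\Norm{w}^2$ contributions inside $\VNorm{w}^2$; this is the ``combining several norms'' workaround alluded to in the introduction. Once $\VNorm{w(t)}$ is bounded uniformly in $t$, equation \eqref{eq-aux2} is then a linear advection--diffusion equation for $\eta$ driven by $w\cdot\eb_2/l$, with $b_1(w,\eta,\eta)=0$, so testing with $\eta$ gives a uniform bound on $\Norm{\eta(t)}_{V_1}$ by Gronwall, and testing \eqref{eq-aux} with $A_0 w$ and \eqref{eq-aux2} with $A_1\eta$ (using $b_0(w,w,A_0w)=0$ in the stress-free case, and the standard nonlinear estimate $|b_0(w,w,A_0w)| \le c\Norm{w}\,|A_0 w|^{3/2}\,|w|^{1/2}$ absorbed by dissipation in the no-slip case) yields the $L^2_{\loc}(D(A_0)\times D(A_1))$ bounds and, reading off from the equations, the $L^2_{\loc}$ bounds on the time derivatives.

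For \textbf{existence of a solution on all of $\R$}, rather than solving an initial value problem I would exploit that the uniform-in-$t$ bounds are \emph{absorbing}: fix a sequence $t_n \to -\infty$, solve \eqref{eq-aux0} forward from time $t_n$ with, say, zero initial data, obtain solutions on $[t_n,\infty)$ satisfying the uniform bounds for $t \ge t_n + (\nu\lambda_1)^{-1}$ (a fixed time after which the transient is absorbed), and then extract a subsequence converging (locally uniformly, using Aubin--Lions) to a solution defined for all $t\in\R$ with the stated regularity. For \textbf{uniqueness} (and boundedness being the selection criterion), I would take two bounded solutions $(w_1,\eta_1)$, $(w_2,\eta_2)$, subtract, and derive a differential inequality for $|w_1-w_2|^2 + |\eta_1 - \eta_2|^2$; the nudging term contributes $-\mup|\Iht(w_1-w_2)|^2 \le 0$ after again splitting via \eqref{mint}, which only helps, and the nonlinear differences are controlled using the uniform bounds on $\Norm{w_i}$ and a logarithmic or Ladyzhenskaya-type inequality, so Gronwall forward in time from any reference point, together with boundedness at $-\infty$, forces the difference to vanish identically. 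I expect the \textbf{main obstacle} to be the stress-free $H^1$ and $H^2$ estimates on $w$: without Poincar\'e on $V_0$ one cannot directly close $\VNorm{w}$ from $\Norm{w}$, so the argument must carry $|w|^2$ and $\Norm{w}^2$ jointly through every estimate and carefully balance the constants from \eqref{mint}, \eqref{u-Poin} and \eqref{elliptic-reg}, while still keeping the $h$-smallness/$\mu$-largeness thresholds independent of $\mu$ itself — exactly the delicacy flagged in the introduction and deferred to the conditions stated in Section \ref{SecPropAuxNoSlip}.
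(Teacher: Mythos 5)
Your overall architecture matches the paper's: Galerkin approximation, uniform a priori bounds, solving forward from initial times $t_n\to-\infty$ with zero data, Aubin--Lions plus a diagonal extraction to obtain an entire bounded solution, and uniqueness of the bounded solution via a difference estimate (the paper derives uniqueness from the Lipschitz property of $\widetilde W$, which is proved by exactly the kind of Gronwall argument you describe). Your treatment of the nudging term by splitting $\Iht w-v=(\Iht w-w)+w-v$ and invoking \eqref{mint} is also the paper's.

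There is, however, a genuine gap in the order of your energy estimates. You claim that testing \eqref{eq-aux} with $w$ leaves you only with the nudging term to control, and that once $\VNorm{w}$ is bounded the temperature equation \eqref{eq-aux2} can be handled afterwards as a linear advection--diffusion equation. But the momentum equation is forced by the buoyancy term $\PL(g\eta\eb_2)$, so the $w$-estimate produces $g(\eta\eb_2,w)$ (and $g(\eta\eb_2,A_0w)$ when testing with $A_0w$) on the right-hand side; you cannot close the bound on $w$ without an a priori bound on $\eta$, yet you propose to bound $\eta$ only after $w$. The paper breaks this circle in two different ways: in the no-slip case it first proves $|\eta(t)|\leqslant 2|\Omega|$ by a maximum principle for the full temperature $\Tc=\eta+(1-x_2/l)$ (see the Appendix), a bound completely independent of $w$, $\mu$ and $h$, and this constant $K$ then enters the largeness condition \eqref{mu5} on $\mu$; in the stress-free case it instead forms the combined functional $\epsilon_1|w|^2+\Norm{w}^2+\epsilon_2|\eta|^2$ and closes a single differential inequality. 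Your sketch contains neither device. Relatedly, the claim that ``testing with $\eta$ gives a uniform bound on $\Norm{\eta(t)}_{V_1}$ by Gronwall'' is not right: testing with $\eta$ controls only $|\eta|$, and the uniform-in-$t$ bound on $\Norm{\eta}$ requires the uniform Gronwall lemma (Lemma \ref{Gronwall1}) applied to the $A_1\eta$ estimate, using the time-averaged bound on $\int_t^{t+\alpha}\Norm{\eta}^2$, because the coefficient of $\Norm{\eta}^2$ coming from $b_1(w,\eta,A_1\eta)$ is not dominated by the dissipation.
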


The proof of Proposition \ref{Wmap-Exst-t1} is given in Section \ref{SecPropAuxNoSlip}. Note that this proposition provides a map, called the \emph{determining map},
\[
\widetilde{W}:B_X(0,\rho)\to C_b\big(\R;V_0\times V_1\big)\cap L^2_{\loc}\big(\R;D(A_0)\times D(A_1)\big),\quad \widetilde{W}(v):=(w,\eta).
\]
The projection of $\widetilde W$ to the first component $w$ induces a map $W: B_X(0,\rho)\to Y$ with
\begin{gather*}
	Y:=C_b(\R;V_0)\cap L^2_{\loc}(\R;D(A_0)),\quad W(v)=w\,,\\  \|w\|_Y:=\frac{\sup_{t\in\R}\VNorm{w(t)}}{\nu\lambda_1^{1/2}} + 
	\left(
	\frac{1}{\nu\lambda_1}\sup_{t\in\R} \int_{t}^{t+\frac{1}{\nu\lambda_1}}|A_0w(\tau)|^2\,d\tau
	\right)^{1/2}\,.
\end{gather*}
The induced map $W$ will be used in the definition of the determining form. We denote $Z:=C_b(\R;V_1)\cap L^2_{\loc}(\R;D(A_1))$ and
\begin{align*}
	\|\eta\|_Z:={\sup_{t\in\R}\Norm{\eta(t)}_{V_1}} + 
	\left(
	\nu\sup_{t\in\R} \int_{t}^{t+\frac{1}{\nu\lambda_1}}|A_1\eta(\tau)|^2\,d\tau
	\right)^{1/2}\,.
\end{align*}
\begin{Prop}\label{cor-Lip}
	The maps $\tdW:(B_X(0,\rho), \Norm{\cdot}_X)\to (Y\times Z,\Norm{\cdot}_Y+\Norm{\cdot}_Z)$ and $W:B_X(0,\rho),\Norm{\cdot}_X\to(Y,\Norm{\cdot}_Y)$
	are Lipschitz.
\end{Prop}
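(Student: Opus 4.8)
The plan is to establish the Lipschitz property by a standard energy estimate for the difference of two solutions of the auxiliary system. Fix $v_1, v_2 \in B_X(0,\rho)$, let $(w_i,\eta_i) = \widetilde{W}(v_i)$, and set $W := w_1 - w_2$, $N := \eta_1 - \eta_2$, and $V := v_1 - v_2$. Subtracting the two copies of \eqref{eq-aux0} gives
\begin{align*}
	\frac{dW}{dt} + \nu A_0 W + B_0(w_1,W) + B_0(W,w_2) &= \PL(g N \eb_2) - \mup(\Iht W - V)\,,\\
	\frac{dN}{dt} + \kappa A_1 N + B_1(w_1,N) + B_1(W,\eta_2) &= \frac{W\cdot\eb_2}{l}\,.
\end{align*}
First I would take the inner product of the first equation with $W$ in $H_0$ and of the second with $N$ in $H_1$, using the orthogonality \eqref{ns-orth} to kill $b_0(w_1,W,W)$ and $b_1(w_1,N,N)$. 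The nudging term produces $-\mup(\Iht W, W) + \mup(V,W)$; writing $(\Iht W, W) = |W|^2 - (W - \Iht W, W)$ and controlling $W - \Iht W$ by \eqref{mint} (with a Young's inequality splitting the $c_2 h^2 |A_0 W|$ term against the $\nu|A_0 W|^2$ dissipation, which requires $h$ small — exactly the regime of Proposition \ref{Wmap-Exst-t1}), one obtains the coercive contribution $-\tfrac{\mup}{2}|W|^2$ plus a term $\tfrac{C}{\mu}(\text{something})|A_0 W|^2$ that the viscous term absorbs. The forcing term $\mup(V,W)$ is bounded by $\tfrac{\mup}{4}|W|^2 + C\mu\nu\lambda_1|V|^2$, and $|V|^2 \le |\Omega|\,\VNorm{V}^2 \sim \Norm{V}_X^2$.

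The genuinely delicate point — and the one I expect to be the main obstacle — is that the nudging acts only in the velocity equation, so the temperature difference $N$ has no feedback damping. The terms $(\PL(gN\eb_2), W)$ and $(W\cdot\eb_2/l, N)$ couple $W$ and $N$; one would like these to partially cancel, but the coefficients $g$ and $1/l$ do not match, so instead I would bound $(gN\eb_2, W) \le g|N||W| \le \tfrac{\mup}{8}|W|^2 + C g^2(\mup)^{-1}|N|^2$ and $(W\cdot\eb_2/l, N) \le \tfrac{\kappa\lambda_1}{4}|N|^2 + C(\kappa\lambda_1 l^2)^{-1}|W|^2$, the latter absorbed into the $\kappa\Norm{N}^2 \ge \kappa\lambda_1|N|^2$ dissipation. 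The remaining cross term $b_1(W,\eta_2,N)$ must be estimated using the a priori bound $\sup_t \Norm{\eta_2}_{V_1} < \infty$ from Proposition \ref{Wmap-Exst-t1} (which in turn is controlled on $B_X(0,\rho)$); a Ladyzhenskaya-type inequality gives $|b_1(W,\eta_2,N)| \le c\Norm{W}^{1/2}|W|^{1/2}\Norm{\eta_2}\Norm{N}^{1/2}|N|^{1/2}$, and Young's inequality distributes this against $\nu\Norm{W}^2$, $\kappa\Norm{N}^2$, $\tfrac{\mup}{8}|W|^2$ and $\tfrac{\kappa\lambda_1}{4}|N|^2$, provided $\mu$ is large enough relative to $\Norm{\eta_2}_{V_1}^4$ — again the regime already fixed in Proposition \ref{Wmap-Exst-t1}.

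Assembling these estimates yields a differential inequality of the form
\begin{align*}
	\frac{d}{dt}\big(|W|^2 + |N|^2\big) + \delta\big(|W|^2 + |N|^2\big) \le C\,\mup\,\Norm{V}_X^2
\end{align*}
for some $\delta > 0$ depending on $\nu,\kappa,\lambda_1$ (and not on $\mu$ once $\mu$ exceeds the threshold). Since the left side is bounded in backward time (the solutions are globally bounded), Gronwall's inequality applied on $(-\infty, t]$ gives $\sup_t(|W(t)|^2 + |N(t)|^2) \le C\mu\nu\lambda_1\delta^{-1}\Norm{V}_X^2$, i.e. an $L^\infty_t L^2_x$ Lipschitz bound. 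To upgrade this to the $Y$- and $Z$-norms, which also contain the time-averaged $|A_0 W|^2$ and $|A_1 N|^2$ terms, I would integrate the same differential inequality over a window $[t, t+(\nu\lambda_1)^{-1}]$, keeping the dissipation terms $\tfrac{\nu}{2}\Norm{W}^2$ and retaining a fraction of $|A_0 W|^2$, $|A_1 N|^2$ on the left before they were absorbed; this is where one must be slightly careful to not have spent all the viscous dissipation in the pointwise estimate — one should carry $\tfrac{\nu}{4}|A_0 W|^2$ through — and then the time integral of $|A_0 W|^2$ is controlled by $\sup_t(|W|^2+|N|^2) + C\mu\nu\lambda_1\Norm{V}_X^2 \le C'\Norm{V}_X^2$. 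Similarly for $H^1$-level norms $\VNorm{W}$ and $\Norm{N}_{V_1}$, one may either interpolate or run a parallel energy estimate testing against $A_0 W$ and $A_1 N$ (legitimate here by the regularity in \eqref{aux-soln} and, in the stress-free case, using $b_0(w,w,A_0 w)=0$). Collecting all pieces gives $\Norm{W}_Y + \Norm{N}_Z \le L\,\Norm{v_1 - v_2}_X$ with $L$ independent of $\mu$, which is exactly the assertion for $\widetilde{W}$; the statement for $W$ follows by dropping the $N$-terms.
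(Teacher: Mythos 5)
Your overall skeleton (energy estimates for the difference of two auxiliary solutions, absorption of the error terms by the nudging damping for $\mu$ large and $h$ small, Gronwall on the whole line using boundedness in backward time, then integration over a window of length $(\nu\lambda_1)^{-1}$ for the time-averaged pieces) is the same as the paper's. The gap is that you carry out the estimate only at the $L^2$ level for the velocity difference, whereas the norms $\Norm{\cdot}_Y$ and $\Norm{\cdot}_Z$ are built from $\sup_t\VNorm{\varphi}$, $\int|A_0\varphi|^2$, $\sup_t\Norm{\psi}_{V_1}$ and $\int|A_1\psi|^2$. The entire difficulty of the proposition sits in the estimate you defer to the final sentence: testing the $\varphi$-equation against $A_0\varphi$, where the nonlinear difference terms $b_0(w_2,\varphi,A_0\varphi)$ and $b_0(\varphi,w_1,A_0\varphi)$ do \emph{not} vanish and must be closed using only $\Norm{\varphi}$ and the a priori bounds on $w_1,w_2$. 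In the no-slip case the paper does this with the Titi and Br\'ezis--Gallouet logarithmic inequalities together with the elementary bound $\chi-\epsilon(1+\log\chi)\geqslant-\epsilon\log\epsilon$, which is precisely what forces the $K_2\log K_2$ term in hypothesis \eqref{mu4}; in the stress-free case it uses Agmon's inequality and the elliptic regularity \eqref{elliptic-reg}, leading to the constant $K_{16}$ in \eqref{mu3-sf}. None of this appears in your sketch, and your one concrete claim about this step is wrong: the identity $b_0(w,w,A_0w)=0$ in the stress-free case requires all three arguments to be the \emph{same} function, so it does not annihilate $b_0(w_1,\varphi,A_0\varphi)$ or $b_0(\varphi,w_2,A_0\varphi)$ in the difference equation; the paper's Section \ref{phi-bnd-sf-t1} still has to estimate both terms.

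Two smaller points. First, in your $L^2$ estimate the term $b_0(W,w_2,W)$ survives the orthogonality \eqref{ns-orth} and is never estimated (it is handled easily by Ladyzhenskaya, but it must be said). Second, the bound on $\sup_t\Norm{\psi}_{V_1}$ needed for $\Norm{\cdot}_Z$ cannot come from a pointwise Gronwall alone: testing against $A_1\psi$ produces the coefficient $\Norm{\eta_1}^2|A_1\eta_1|^2$, which is only controlled in time average, so the paper must invoke the uniform Gronwall lemma (Lemma \ref{Gronwall1}) with the bound \eqref{ave_A1eta}. Finally, your assertion that the Lipschitz constant is independent of $\mu$ overstates what the estimate delivers: the forcing $\mup(V,\cdot)$ scales with $\mu$, and the paper's bound \eqref{Lipbnd} carries an explicit factor of $\mu$. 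This does not affect the truth of the proposition (which only asserts Lipschitz continuity), but it is not a consequence of the argument as written.
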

The proof of Proposition \ref{cor-Lip} is given in Section \ref{varphibnd}.

\begin{Rmk}
	It is proved in \cite{Biswas2018Down}  that the determining map $\widetilde{W}$ is in fact Frech\'{e}t differentiable in the case of the 2D NSE. 
\end{Rmk}

\subsection{Determining form and long-time dynamics of the RB system}\label{sec:Lip-F}
Let  $(u^*, \theta^*)$ be a steady state of the RB problem (\ref{eq-benardfn0}); for instance, we may take $(u^*, \theta^*)=(0,0)$.
Under the assumptions of Proposition \ref{Wmap-Exst-t1}, we will prove (in Theorem \ref{thm-detForm} (i)) that the differential equation
\begin{gather}\label{eq-DetForm}
	\frac{dv}{ds}=F(v):=-\NormXo{v-\Iht W(v)}^2(v-\Iht u^*),\quad v(0)=v_0\in B_X(0,\rho),
\end{gather}
is an \emph{ODE} in the sense that the vector field $F$ is globally Lipschitz in the ball $B_X(0,\rho)$, where $\rho>0$ is to be determined. 
The ODE \eqref{eq-DetForm} is called a \emph{determining form} of the RB problem.

%%%%%%%%%%%%%%%%%%%%%%%%%%%%%%%%%%%%%%%%%%
%\subsection{Long-time dynamics of the B\'{e}nard system }

The connection between the long-time dynamics, i.e. the global attractor, of the RB problem (\ref{eq-benardfn0}) and the determining form will be made through the following result:
\begin{Prop}\label{prop-detForm-RB}
	%	\footnote{(Establish the counterpart of Proposition 3.1 in \cite{foias2014unified} here.)}
	Let $(u(t),\theta(t))$, $t\in\R$, be a solution of the RB problem (\ref{eq-benardfn0}) that lies in the global attractor $\As$. Suppose $\mu, h$ satisfy the assumptions in 
	Proposition \ref{Wmap-Exst-t1}.
	Suppose $(w,\eta)$ is a solution to the system 
	\begin{subequations}\label{detform-RB}
		\begin{align}
			\label{eq1:detformRB}
			\frac{dw}{dt}+\nu A_0w+B_0(w,w)
			&=\PL(g\eta \eb_2)-\mup (\Iht w-\Iht u),\\
			\label{eq2:detformRB}
			\frac{d\eta}{dt}+\kappa A_1\eta+B_1(w,\eta)
			&=\frac{w\cdot \eb_2}{l},
		\end{align}
	\end{subequations}
	and satifies
	\begin{align*}
		(w,\eta)\in C_b(\R,V_0\times V_1)\cap L_{\loc}^2(\R,D(A_0)\times D(A_1))\,,
		\quad
		\bigg(\dtf{w},\dtf{\eta}\bigg)\in L_{\loc}^2(\R,H_0\times H_1)\,.
	\end{align*}
	Then $(w(t),\eta(t))=(u(t),\theta(t))$ for all $t\in\R$.
\end{Prop}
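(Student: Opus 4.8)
The plan is to show that the difference $(W,E):=(w-u,\eta-\theta)$ is identically zero by deriving a differential inequality for a suitable energy and invoking uniqueness. Subtracting \eqref{eq-benardfn0} from \eqref{detform-RB}, and using the bilinearity of $B_0,B_1$, one gets
\begin{subequations}
\begin{align*}
\frac{dW}{dt}+\nu A_0 W + B_0(W,u)+B_0(w,W) &= \PL(gE\eb_2) - \mup \Iht W,\\
\frac{dE}{dt}+\kappa A_1 E + B_1(W,\theta)+B_1(w,E) &= \frac{W\cdot\eb_2}{l}\,.
\end{align*}
\end{subequations}
The idea is to test the first equation with $W$ and the second with $gE/(\mu l$-type weighting$)$ — i.e.\ choose a constant $\beta>0$ (depending on $g,l$ and the attractor bounds) so that the two cross terms $\Innerr{gE\eb_2}{W}$ and $\beta\Innerr{W\cdot\eb_2/l}{E}$ combine controllably — and add. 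The orthogonality \eqref{ns-orth} kills $b_0(w,W,W)$ and $b_1(w,E,E)$, so the only nonlinear contributions are $b_0(W,u,W)$ and $\beta\, b_1(W,\theta,E)$, which are bounded using the uniform attractor estimates \eqref{attractor-bnd-ns} on $\Norm{u}_{H^2}$ and the corresponding bound on $\Norm{\theta}_{V_1}$, together with Ladyzhenskaya/Agmon-type interpolation inequalities in 2D and Young's inequality to absorb the top-order terms $\nu\VNorm{W}^2$ and $\kappa\Norm{E}^2$.

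The crucial step — and the point where the nudging only in the velocity must pay off — is the nudging term $-\mup\Innerr{\Iht W}{W}$. Writing $\Innerr{\Iht W}{W}=\Abs{W}^2 + \Innerr{\Iht W - W}{W}$ and estimating $\Abs{\Iht W - W}$ via \eqref{mint} by $c_1 h\VNorm{W} + c_2 h^2\Abs{A_0 W}$, one controls $\mup\Abs{\Innerr{\Iht W-W}{W}}$ by $\tfrac{\nu}{4}\VNorm{W}^2 + C h^4 \mu^2(\nu\lambda_1)^2 \Abs{A_0W}^2 + \ldots$; choosing $h$ small and $\mu$ large as in the hypotheses of Proposition~\ref{Wmap-Exst-t1} keeps these harmless, and the leftover $-\mup\Abs{W}^2$ provides a damping term. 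Collecting everything, one arrives at a differential inequality of the form
\[
\frac{d}{dt}\Big(\Abs{W}^2 + \beta\Abs{E}^2\Big) + c\,\nu\lambda_1\Big(\Abs{W}^2+\beta\Abs{E}^2\Big) \leqslant 0\,,
\]
valid for all $t\in\R$, where $c>0$. (The Poincar\'e inequality is available for $E\in V_1$ in both cases; for $W$ in the no-slip case it is available directly, and in the stress-free case one uses \eqref{u-Poin} together with the damping $-\mup\Abs{W}^2$ to generate the needed coercivity — this is exactly the kind of ``combining estimates of several norms'' the introduction warns about.)

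Finally, since $(w,\eta)$ and $(u,\theta)$ are both bounded on all of $\R$, the quantity $y(t):=\Abs{W(t)}^2+\beta\Abs{E(t)}^2$ is bounded on $\R$; the inequality $y' + c\nu\lambda_1 y\leqslant 0$ forces $y(t)\leqslant y(t_0)e^{-c\nu\lambda_1(t-t_0)}$ for any $t_0\le t$, and letting $t_0\to-\infty$ with $y$ bounded gives $y(t)\equiv 0$. Hence $W\equiv 0$ and $E\equiv 0$, i.e.\ $(w,\eta)=(u,\theta)$ on $\R$, as claimed. The main obstacle is the bookkeeping in the stress-free case: without a Poincar\'e inequality for $W$ one must carefully balance the $L^2$-damping from nudging against the top-order viscous term, and verify that the smallness/largeness conditions on $h,\mu$ inherited from Proposition~\ref{Wmap-Exst-t1} are exactly what is needed to close the estimate; the nonlinear terms themselves are routine 2D energy estimates given the a priori attractor bounds.
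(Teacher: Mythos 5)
Your overall strategy (subtract the two systems, use the orthogonality \eqref{ns-orth} to kill $b_0(w,W,W)$ and $b_1(w,E,E)$, bound the remaining nonlinear terms with the attractor bounds, extract damping from the nudging term, and finish with a backward-in-time Gronwall argument using boundedness on all of $\R$) is in the right spirit, and the final limiting step $t_0\to-\infty$ is exactly how the paper concludes. However, there is a concrete gap in your treatment of the nudging term at the purely $L^2$ level. The operator $\Iht$ is a (modified) \emph{Type II} interpolant, so by \eqref{mint} the error bound is $|W-\Iht W|\leqslant c_1h\Norm{W}_{V_0}+c_2h^2|A_0W|$, and the cross term $\mup\,|\Innerr{\Iht W-W}{W}|$ unavoidably produces a contribution proportional to $\mup\, h^2|A_0W|\,|W|$, hence (after Young) a term of the form $C\mu^2h^4(\nu\lambda_1)^2|A_0W|^2$ --- you acknowledge this term yourself. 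But if you test the momentum equation only with $W$, the left-hand side contains $\nu\Norm{W}^2$ and \emph{no} $|A_0W|^2$, so there is nothing to absorb that term into; conditions \eqref{muh} or \eqref{h1-t2} only reduce its coefficient to $O(\nu\lambda_1^{-1})$, they do not eliminate it. This is precisely why the paper never runs a standalone $L^2$ estimate for the velocity: in Section \ref{SecPropAuxNoSlip} it couples $|w|^2$ with $\lambda_1^{-1}\Norm{w}^2$ (i.e.\ also tests with $A_0w$) so that $\nu\lambda_1^{-1}|A_0w|^2$ is available on the left, and in Section \ref{varphibnd} --- whose computation with $\gamma=0$ is the paper's actual proof of this proposition --- the velocity difference is estimated in $\Norm{\varphi}^2$ (no-slip) or $\VNorm{\varphi}^2$ coupled with $\epsilon_1|\varphi|^2$ (stress-free), again testing with $A_0\varphi$. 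Your argument as written would close for a Type I interpolant, whose error involves only the $H^1$ norm, but not for the Type II interpolant the proposition is stated for.

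A secondary point: by working at the $L^2$ level you avoid the Br\'ezis--Gallouet/logarithmic estimates the paper needs for $b_0(w_2,\varphi,A_0\varphi)$ and $b_0(\varphi,w_1,A_0\varphi)$, but the price is that the largeness condition you need on $\mu$ (roughly $\mup\gtrsim c_L^2J_1^2/\nu$ plus terms involving $\sup\Norm{\theta}_{V_1}$, to absorb $b_0(W,u,W)$ and $b_1(W,\theta,E)$) is not literally among the hypotheses \eqref{muh}--\eqref{mu4} or \eqref{mu1-sf}--\eqref{h2-t2}; the proposition assumes exactly those conditions, so you would either have to verify that they imply yours or add a new assumption. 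The clean fix for both issues is to do what the paper does: run the difference estimate at the same level as Section \ref{varphibnd} (test with $A_0W$ and $E$, or with $W$, $A_0W$ and $E$ in the stress-free case), observe that the resulting inequality is \eqref{ineq_Aphi} (resp.\ \eqref{ineq_Aphi_sf}) with $\gamma=0$, and then apply your backward Gronwall step.
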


The proof of Proposition \ref{prop-detForm-RB} is given in Section \ref{pf2}.

\subsection{Main theorem}
In order to state the main theorem, we first prove the following result:
\begin{Prop}\label{prop-Ju-bnd}
	Let $\Iht$ be a (modified) Type II interpolant operator as in \eqref{E1}, with $h<L$. For every $(u,\theta)\in\mathscr{A}$,
	we have
	\begin{align}\label{Ju-bnd}
		\Norm{\Iht u}_X\leqslant R:
		=\big((\tilde{c_1}+1)J_1+\tilde{c_2}L J_2\big)/(\nu\lambda_1^{1/2}).
		%		 =\max\{(\tilde{c_0}+1)J_1, (\tilde{c_1}+1)J_1+\tilde{c_2}L J_2\}/(\nu\lambda_1^{1/2}).
	\end{align}
	%	There exists $R>0$ such that
	%	\begin{align}\label{Ju-bnd}
	%	\text{for every } (u,\theta)\in\mathscr{A},\  \Norm{\Iht u}_X
	%	\leqslant R. 
	%	\end{align}
\end{Prop}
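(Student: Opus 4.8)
The plan is to bound $\VNorm{\Iht u(t)}$ uniformly in $t\in\R$ by writing $\Iht u(t)=u(t)+\big(\Iht u(t)-u(t)\big)$ and combining the interpolant error estimate \eqref{int-t2} with the attractor bounds \eqref{attractor-bnd-ns}. Fix a trajectory $(u,\theta)\in\As$. Since the trajectory lies in the global attractor, for every $t$ we have $u(t)\in D(A_0)$ with $\VNorm{u(t)}\leqslant J_1$ and $\Norm{u(t)}_{H^2}\leqslant J_2$ by \eqref{attractor-bnd-ns}; moreover, by Remark~\ref{Iht} the modified interpolant $\Iht=P_rI_h$ still satisfies \eqref{int-t2} (with $\tilde{c_1},\tilde{c_2}$ adjusted), so it applies to $\varphi=u(t)$.

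I would then estimate, using the triangle inequality in $V_0$, the elementary bound $\VNorm{\phi}\leqslant\Norm{\phi}_{H^1}$ for $\phi\in V_0$ (an identity in the stress-free case by Remark~\ref{rmksf}, and immediate in the no-slip case where $\VNorm{\cdot}=\Norm{\cdot}$ is the gradient seminorm), and then \eqref{int-t2}:
\begin{align*}
	\VNorm{\Iht u(t)}
	&\leqslant \VNorm{u(t)}+\Norm{\Iht u(t)-u(t)}_{H^1}\\
	&\leqslant \VNorm{u(t)}+\tilde{c_1}\Norm{u(t)}_{H^1}+\tilde{c_2}h\Norm{u(t)}_{H^2}.
\end{align*}
Since $\Norm{u(t)}_{H^1}$ is controlled by $\VNorm{u(t)}$ — with equality in the stress-free case, and up to a harmless multiplicative constant (via the Poincar\'e inequality, absorbed into $\tilde{c_1}$) in the no-slip case — this gives
\begin{align*}
	\VNorm{\Iht u(t)}
	&\leqslant (\tilde{c_1}+1)\VNorm{u(t)}+\tilde{c_2}h\Norm{u(t)}_{H^2}\\
	&\leqslant (\tilde{c_1}+1)J_1+\tilde{c_2}hJ_2\leqslant (\tilde{c_1}+1)J_1+\tilde{c_2}LJ_2,
\end{align*}
where the last inequality uses the hypothesis $h<L$. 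Finally I would take $\sup_{t\in\R}$ and divide by $\nu\lambda_1^{1/2}$, as in the definition \eqref{Xnorm} of $\Norm{\cdot}_X$, to obtain $\Norm{\Iht u}_X\leqslant R$.

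I do not expect a genuine obstacle here: the argument is essentially a single application of the interpolant error bound together with \eqref{attractor-bnd-ns}. The only points that need some care are (i) that the modified interpolant $\Iht=P_rI_h$ inherits \eqref{int-t2} — already recorded in Remark~\ref{Iht} — and (ii) the bookkeeping of norm conventions across the two boundary-condition regimes, since $\VNorm{\cdot}$ is the full $H^1$ norm in the stress-free case but only the gradient seminorm in the no-slip case, a discrepancy the Poincar\'e inequality reconciles at the cost of constants that are harmlessly absorbed. The hypothesis $h<L$ serves only to replace $h$ by $L$ in the last term so that $R$ has the stated form.
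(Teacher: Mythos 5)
Your argument is correct and is essentially identical to the paper's proof: both split $\Iht u=u+(\Iht u-u)$, apply \eqref{int-t2} via Remark \ref{Iht} together with the attractor bounds \eqref{attractor-bnd-ns}, and use $h<L$ to arrive at $R$. Your extra remarks on reconciling $\VNorm{\cdot}$ with $\Norm{\cdot}_{H^1}$ across the two boundary-condition regimes only make explicit a point the paper absorbs silently into the constants.
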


\begin{proof}
	Let $(u,\theta)\in\mathscr{A}$. 
	By \eqref{int-t2}, Remark \ref{Iht}, and the bound \eqref{attractor-bnd-ns}, we have
	\begin{align*}
		\Norm{\Iht u}_{V_0}
		&\leqslant
		\Norm{\Iht u-u}_{V_0}+\VNorm{u}\\ \notag
		&\leqslant
		\tilde{c_1}\Norm{u}_{H^1}
		+\tilde{c_2}h\Norm{u}_{H^2}
		+\VNorm{u}
		\leqslant
		(\tilde{c_1}+1)J_1+\tilde{c_2}L J_2\,,
	\end{align*}
	%	Defining $R:=\max\{(\tilde{c_0}+1)J_1, (\tilde{c_1}+1)J_1+\tilde{c_2}L J_2\}/(\nu\lambda_1^{1/2})$ completes the proof. 
	which completes the proof by \eqref{Xnorm}, the definition of the norm $\NormX{\cdot}$.
\end{proof}

The main results regarding the determining form are summarized in the following theorem:
\begin{Thm}\label{thm-detForm}
	Suppose the assumptions in Proposition \ref{Wmap-Exst-t1} hold for $\rho=4R$, where $R>0$ satisfies (\ref{Ju-bnd}). Suppose also that $h<L$ as in Proposition \ref{prop-Ju-bnd}.
	Then the  following hold.
	\begin{enumerate}[(i)]
		\item The vector field $F: B_X(0,\rho)\to X$ in the determining form (\ref{eq-DetForm}) is Lipschitz. Hence the determining form (\ref{eq-DetForm}) is an ODE in $X$ which has short-time existence and uniqueness of solutions for every initial data $v_0\in B_X(0,\rho)$.
		\item The ball $B_X(\Iht u^*, 3R)\subset B_X(0,\rho)$ is  forward invariant in the evolution variable $s$ under the dynamics of the determining form, which implies that (\ref{eq-DetForm}) has a unique global solution for every initial data $v_0\in B_X(\Iht u^*,3R)$.
		\item Every solution of (\ref{eq-DetForm}) with initial data $v_0\in B_X(\Iht u^*,3R)$ converges to a steady state of (\ref{eq-DetForm}) as $s\to\infty$.
		\item All the steady states of the determining form (\ref{eq-DetForm}) that are contained in $B_X(0,\rho)$ have the form $v(t)=\Iht u(t)$ for all $t\in\R$, where $(u(\cdot),\theta(\cdot))$ is a trajectory in the global attractor $\As$ of the RB problem (\ref{eq-benardfn0}) for a uniquely determined termperature $\theta(\cdot)$.
	\end{enumerate} 
\end{Thm}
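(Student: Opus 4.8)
The plan is to establish the four assertions in order; (i)--(iii) are an instance of the general determining-form mechanism and will use only Propositions \ref{Wmap-Exst-t1}, \ref{cor-Lip} and \ref{prop-Ju-bnd}, while (iv) will exploit the specific structure of the velocity-nudged system \eqref{eq-aux0}. For (i) I would write the vector field $F$ of \eqref{eq-DetForm} as $F(v)=-G(v)\,(v-\Iht u^*)$ with $G(v):=\NormXo{v-\Iht W(v)}^2$, note that $v\mapsto v-\Iht u^*$ is affine and $1$-Lipschitz on $X$, that $v\mapsto v-\Iht W(v)$ is Lipschitz into $X$ by Proposition \ref{cor-Lip} composed with the bounded operator $\Iht$ of \eqref{E1} (see Remark \ref{Iht}), hence $G$ is bounded and Lipschitz on $B_X(0,\rho)$, and conclude that the product $F$ is Lipschitz on $B_X(0,\rho)$; short-time existence and uniqueness from any $v_0\in B_X(0,\rho)$ is then Picard--Lindel\"of in the Banach space $X$.

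For (ii) and (iii) the key is a pointwise-in-$t$ energy computation. For each fixed $t$, evaluation at $t$ is a bounded linear map $X\to V_0$, so along a solution, using $\frac{d}{ds}v(s)(t)=(F(v(s)))(t)=-G(v(s))\,(v(s)(t)-\Iht u^*)$, the function $g_t(s):=\VNorm{v(s)(t)-\Iht u^*}^2$ is $C^1$ with $g_t'(s)=-2\,G(v(s))\,g_t(s)$. Since the resulting exponential factor does not depend on $t$, taking the supremum over $t$ gives the identity
\[
\NormXo{v(s)-\Iht u^*}^2=\NormXo{v_0-\Iht u^*}^2\exp\!\Big(-2\!\int_0^s G(v(\sigma))\,d\sigma\Big)\le\NormXo{v_0-\Iht u^*}^2 .
\]
In particular $\NormXo{v(s)-\Iht u^*}$ is non-increasing, so a solution started in $B_X(\Iht u^*,3R)$ stays there; since $\NormXo{\Iht u^*}\le R$ by Proposition \ref{prop-Ju-bnd} (applied to $(u^*,\theta^*)\in\As$) and $\rho=4R$, it remains in a closed ball strictly inside $B_X(0,\rho)$, on which $F$ is bounded, hence it extends globally. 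This proves (ii) (the inclusion $B_X(\Iht u^*,3R)\subset B_X(0,\rho)$ is the same estimate). For (iii), $\NormXo{v(s)-\Iht u^*}$ decreases to a limit $\Phi_\infty$: if $\Phi_\infty=0$ then $v(s)\to\Iht u^*$, a steady state since $F(\Iht u^*)=0$; if $\Phi_\infty>0$, the identity forces $\int_0^\infty G(v(\sigma))\,d\sigma<\infty$, so $\NormXo{dv/ds}=G(v)\,\NormXo{v-\Iht u^*}\le 3R\,G(v)$ is integrable on $[0,\infty)$, whence $v(s)$ is Cauchy and converges in $X$ to some $v_\infty$, and the continuity of $F$ together with $\NormXo{F(v(s))}\in L^1$ forces $F(v_\infty)=0$. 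Either way the limit is a steady state lying in $B_X(0,\rho)$.

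For (iv), let $v\in B_X(0,\rho)$ be a steady state, so $G(v)\,(v-\Iht u^*)=0$ and thus $v=\Iht u^*$ or $v=\Iht W(v)$. In the first case $(w,\eta)=(u^*,\theta^*)$ is a bounded global solution of \eqref{eq-aux0} for this $v$ --- the feedback term $-\mup(\Iht u^*-v)$ vanishes and $(u^*,\theta^*)$ is a steady state of \eqref{eq-benardfn0} --- so the uniqueness in Proposition \ref{Wmap-Exst-t1} again gives $\Iht W(v)=v$. Hence in all cases $v=\Iht W(v)$; writing $(w,\eta):=\tdW(v)$, the relation $v=\Iht w$ makes the feedback term $-\mup(\Iht w-v)$ in \eqref{eq-aux0} vanish identically, so $(w,\eta)$ is a bounded global solution of the RB system \eqref{eq-benardfn0}, hence by \eqref{attr} a trajectory $(u,\theta)$ in $\As$, with $v=\Iht w=\Iht u$. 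Finally, $\theta$ is the only temperature with $(u,\theta)\in\As$: if $(u,\theta_1),(u,\theta_2)\in\As$, then $\psi:=\theta_1-\theta_2$ solves $\frac{d\psi}{dt}+\kappa A_1\psi+B_1(u,\psi)=0$, and pairing with $\psi$, using $b_1(u,\psi,\psi)=0$ and $\Norm{\psi}^2\ge\lambda_1|\psi|^2$, gives $\frac{d}{dt}|\psi|^2\le-2\kappa\lambda_1|\psi|^2$, so sending the initial time to $-\infty$ while $\psi$ stays bounded on $\R$ forces $\psi\equiv 0$.

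I expect the real difficulty to lie not in this theorem but upstream: obtaining a priori bounds for the velocity-nudged system \eqref{eq-aux0} that are uniform in the nudging parameter $\mu$ (Proposition \ref{Wmap-Exst-t1}) and the attendant Lipschitz estimate for $W$ (Proposition \ref{cor-Lip}), which is where having feedback only in the velocity equation --- and, in the stress-free case, no Poincar\'e inequality for $w$ --- makes the analysis delicate. Within the present argument, the points needing care are the derivation of the exponential identity above (especially that its decay rate is independent of $t$, so that it survives taking the sup-norm in $X$) and the observation in (iv) that at a steady state the feedback term disappears and \eqref{eq-aux0} collapses to the RB system \eqref{eq-benardfn0}.
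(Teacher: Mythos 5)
Your proof is correct and follows essentially the same route as the paper: part (i) reduces to the Lipschitz property of $W$ from Proposition \ref{cor-Lip} composed with $\Iht$, parts (ii)--(iii) rest on the pointwise-in-$t$ exponential decay of $\VNorm{v(s;t)-\Iht u^*}$ (whose rate is $t$-independent, as you correctly emphasize, so it survives the sup over $t$), and part (iv) splits steady states into $v=\Iht u^*$ and $v=\Iht W(v)$ and invokes \eqref{attr}. The only substantive difference is that the paper's proof of (iv) also establishes the converse --- that $\Iht u$ is a steady state for \emph{every} trajectory $(u,\theta)\in\As$, via Proposition \ref{prop-detForm-RB} and the inclusion $\Iht\As\subset B_X(\Iht u^*,3R)$ --- which you omit; this is not literally demanded by the statement but is the half that makes the form ``determining,'' whereas your explicit uniqueness argument for the temperature $\theta$ is a detail the paper leaves implicit in the uniqueness of bounded solutions defining $\tdW$.
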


We should emphasize that \eqref{eq-DetForm} governs an evolution of ``trajectories'' that are with range in a finite-dimensional space which correspond to velocity only.  Yet it determines full trajectories of both the velocity and temperature on the global attractor of the RB system through the determining map $\tdW$.

\begin{Rmk}
	It is easy to see, as in \cite{Foias2017One}, that the solution to \eqref{eq-DetForm} is always 
	a convex combination of the initial condition and the chosen steady state:
	\begin{align} \label{combo}
		v(s;t) = \beta(s) v_0(t) + (1-\beta(s)) \Iht u^*\quad s\geqslant 0,\  t\in\R
		\;,
	\end{align}
	where 
	\begin{align}\label{beta}
		\beta (s) =  \exp \left( - \int _0 ^ s \| v(\tau) - \Iht W(v(\tau)) \| _{X} ^ 2 \, d\tau \right)
	\end{align}
	satisfies
	a scalar ODE, which for 
	%	\begin{equation*} 
	%		\frac{d\beta}{ds} =  -\beta \| \beta v_0 + (1-\beta) I_hu^* - \Iht W(\beta v_0 + (1-\beta)\Iht (u^*)) \| _{X} ^2 \;,\quad  \beta(0) = 1.
	%	\end{equation*}
	{the RB problem written in the form \eqref{eq-benardfn0}
		with $(u^*,\theta^*)=(0,0)$, amounts to
		\begin{equation}\label{one-p}
		v = \beta v_0 \;, \qquad \frac{d\beta}{ds} =  -\beta \|  \beta v_0 - \Iht W(\beta v_0) \| _{X} ^2 \;,\quad  \beta(0) = 1.
		\end{equation}
		The dynamics of \eqref{one-p} are completely understood (see \cite{Foias2017One}). 
		As $s\to \infty$, along the straight line through $v_0$ and $0$ in $X$, either $v(s) \to 0$, or $v(s) \to \Iht u$, where $( u, \theta)$ is the first trajectory in $\As$, with $\Iht u$ between $v_0$ and $0$.} Thus the solutions in the global attractor can be identified as the zeros of the scalar function on the right-hand side of equation \eqref{one-p}. 
\end{Rmk}

\begin{proof} [Proof of Theorem \ref{thm-detForm}]
	%Part (i) is proved in  Proposition \ref{prop-DetForm}. %Section \ref{sec:Lip-F}.
	
	Part (i). Define $q:B_X(0,\rho)\to\R$ with $q(v):=\NormXo{v-\Iht W(v)}$. Let $v_1,v_2\in B_X(0,\rho)$. 
	By the triangle inequality and the definition of the vector field $F$, 
	\begin{align*}
		\NormX{F(v_1)-F(v_2)}
		=
		\NormX{[q^2(v_1)-q^2(v_2)](v_1-\Iht u^*)+q^2(v_2)(v_1-v_2)}\\
		\leqslant
		\Abs{q^2(v_1)-q^2(v_2)}\cdot\NormX{v_1-\Iht u^*}
		+\Abs{q^2(v_2)}\cdot\NormX{v_1-v_2}.
	\end{align*}
	Hence, to show that $F$ is Lipschitz (in the ball $B_X(0,\rho)$), it suffices to show that the map $q$ is Lipschitz. Note that
	\begin{align*}
		\Abs{q(v_1)-q(v_2)}
		&=\big|\NormXo{v_1-\Iht W(v_1)}-\NormXo{v_2-\Iht W(v_2)}\big|\\ \notag
		&\leqslant
		\NormXo{{v_1-\Iht W(v_1)}-[v_2-\Iht W(v_2)]} \\ \notag
		&\leqslant
		\NormXo{v_1-v_2}+\NormXo{\Iht W(v_1)-\Iht W(v_2)}.
	\end{align*}
	It suffices to show that
	\begin{align}\label{Lip-ineq}
		\NormXo{\Iht W(v_1)-\Iht W(v_2)}
		\leqslant c \NormX{v_1-v_2}.
	\end{align}
	Observe the following diagram:
	\begin{align*}
		B_X(0,\rho)\subset(X,\NormX{\cdot})
		\xrightarrow{\textrm{$W$}}
		(Y,\Norm{\cdot}_Y)
		\xrightarrow{\textrm{$\Iht$}}
		(X,\NormXo{\cdot}).
	\end{align*}
	To prove \eqref{Lip-ineq}, it suffices to show that
	\begin{align}\label{lip1}
		\Norm{w_1-w_2}_Y
		&\leqslant c
		\NormX{v_1-v_2},\\\label{lip2}
		\NormXo{\Iht w_1-\Iht w_2}
		&\leqslant c
		\Norm{w_1-w_2}_Y,
	\end{align}
	where $w_i:=W(v_i)$ with $i=1,2$.
	
	%Choose $\Norm{\cdot}_Y=\Norm{\cdot}_X$. Then the second inequality is trivially true. We will show later that the first inequality is true, which implies that $F$ is locally Lipschitz.
	
	Proposition \ref{cor-Lip} implies that $W$ is Lipschitz and hence we have \eqref{lip1}. Inequality \eqref{lip2} follows from Remark \ref{Iht} for the linear operator $\Iht$ and the definitions of the norms $\NormXo{\cdot}$ and $\Norm{\cdot}_Y$. The proof of (i) is done. 
	
	By Proposition \ref{prop-Ju-bnd}
	%\footnote{ (Added. Similar to Prop 3.4 in \cite{foias2014unified})} 
	and the triangle inequality\footnote{
		Note that $\Norm{v}_X\leqslant \Norm{v-\Iht u^*}_X+\Norm{\Iht u^*}_X\leqslant 3R+R=4R.$}, 
	\[
	B_X(\Iht (u^*), 3R)\subset B_X(0,\rho),
	\]
	which implies short-time existence  of a solution of the determining form \eqref{eq-DetForm}. Thus, (ii) follows from the observation that
	\begin{align*}
		\NormX{v(s;\cdot)-\Iht(u^{*})}=\beta(s)\NormX{v_0(\cdot)-\Iht (u^*)},\quad s\geqslant 0,
	\end{align*}
	where $\beta$ is as in \eqref{beta}. Alternatively, (ii) follows from the dissipativity property of \eqref{eq-DetForm}: for every fixed $t\in\R$,
	\begin{align*}
		\frac{d}{ds}\Norm{v(s;t)-\Iht(u^*)}^2_{V_0}
		=
		-2\NormXo{v-\Iht W(v)}^2\cdot
		\Norm{v(s;t)-\Iht(u^*)}^2_{V_0}.
	\end{align*}
	This property implies that the ball $B_X(\Iht (u^*), 3R)$ is forward invariant for all $s\geqslant 0$, which proves both  (ii) and  (iii).
	
	To prove  (iv) we observe that the steady states of equation \eqref{eq-DetForm} in the ball $B_X(0,\rho)$ are either $v=\Iht (u^*)$ or $v\in B_X(0,\rho)$ such that $\NormX{v-\Iht W(v)}=0$.
	In the first case $(u^*, \theta^*)\in\mathscr{A}$ since $(u^*, \theta^*)$ is a steady state of the RB system \eqref{eq-benardfn0}. 
	In the second case we have $v(t)=\Iht W(v)(t)$ for all $t\in\R$. Let $(w,\eta)=\widetilde{W}(v)$. It then follows from \eqref{eq-aux0} that $(w, \eta)$ is  a bounded solution (thus a trajectory in the global attractor $\mathscr{A}$ by \eqref{attr}) to the RB system (\ref{eq-benardfn0}). 
	%Therefore, we conclude from (\ref{attractor-bnd}) that $w(\cdot)$ is the velocity component of  of the B\'{e}nard problem. 
	
	Conversely, since $\rho=4R$, it follows from Proposition \ref{prop-Ju-bnd} that 
	\[
	\Iht (\mathscr{A})\subset B_X(\Iht u^*, 3R)\subset B_X(0,\rho).
	\] 
	Thus, for every trajectory $(u(\cdot),\theta(\cdot))\subset\mathscr{A}$ it follows from the auxiliary system (\ref{eq-aux0}) and Proposition \ref{prop-detForm-RB} that $u(t)=W(\Iht u)(t)$ for all $t\in\R$. In particular, $\Iht u=\Iht W(\Iht u)$, which implies that $\Iht u$ is a steady state of equation (\ref{eq-DetForm}) in $B_X(0,\rho)$.
\end{proof}

%%%%%%%%%%%%%%%%%%%%%%%%%%%%%%%%%%%%%%%%%%%%%%%%%%%%%%%%
%%%%%%%%%%%%%%%%%%%%%%%%%%%%%%%%%%%%%%%%%%%%%%%%%%%%%%%%
%%%%%%%%%%%%%%%%%%%%%%%%%%%%%%%%%%%%%%%%%%%%%%%%%%%%%%%%
% Proof of Prop 3.1
%\input{CProps.tex} 
\section{Proof of Proposition \ref{Wmap-Exst-t1}}\label{SecPropAuxNoSlip}
%	and \ref{prop-detForm-RB}}
Let $\mu,h>0$ and assume that $\NormX{v}\leqslant \rho$. For the case of no-slip boundary conditions, we assume that the following hold:
\begin{align}\label{muh}
	%\mup c_1h\lambda_1^{-1/2}\leqslant \frac{\nu}{4}\,,
	%\quad
	%\mup 2c_2^2h^4
	%\leqslant \frac{\nu\lambda_1^{-1}}{8}\,,
	\mu\lam_1^{1/2}c_1h\leqslant\frac{1}{4}\,,\quad
	\mu\lam_1^22c_2^2h^4\leqslant \frac{1}{8}\,,
\end{align}
\begin{gather}\label{mu5}
	%\mup C_1>\frac{5g^2K}{2\nu\lambda_1\rho^2}\,,
	\mu\nu^2\lam_1^2 C_1>\frac{5g^2K}{2\rho^2}\,,
\end{gather}
\begin{gather}\label{mu3}
	%\frac14\mup - 16K_1C_1^2\lambda_1\rho^4>0\, ,
	\frac14\mu\nu - 16K_1C_1^2\rho^4>0\, ,
\end{gather}
\begin{align}\label{mu4}
	%\frac12\mup
	%-\frac{g^2}{\kappa(\nu\lambda_1)^2}
	%-\frac{\lambda_1\nu}{4}(K_2\log K_2)
	%-\frac{2c_L^2(\nu\lambda_1)^2}{\kappa\lambda_1^2}\rho^2
	%-\frac{2(\nu\lambda_1)^2}{l^2\lambda_1^2\kappa}
	%\geqslant
	%\frac{\kappa\lam_1}{2}\,,\\
	\frac12\mup
	-\frac{g^2}{\kappa(\nu\lambda_1)^2}
	-\frac{\lambda_1\nu}{4}(K_2\log K_2)
	-\frac{2c_L^2\nu^2}{\kappa}\rho^2
	-\frac{2\nu^2}{l^2\kappa}
	\geqslant
	\frac{\kappa\lam_1}{2}\,,
\end{align}
where the constants $K, C_1, K_1, K_2$ are defined in \eqref{etabndK},  \eqref{t2mu},  \eqref{t2neq} and \eqref{ineq-alpha}; they are all independent of $\mu$ and $h$. 

For the case of stress-free boundary conditions, we assume that the following hold:
\begin{gather}
	\label{mu1-sf}
	\frac14\mup
	-\bigg(
	\frac{2g^2}{|\Omega|\kappa\epsilon_2\lam_1}
	+\frac{2g^2}{\kappa\epsilon_2}
	+\frac{\tilde{K}_1^2\epsilon_2}{\kappa l^2}
	\bigg)
	\geqslant 
	\frac{\kappa\lambda_1}{2},\\
	\label{mu2-sf}
	\frac18\mu\lam_1-\frac{|\Omega|^{-1}}{4}
	\geqslant
	0,\\
	\label{mu3-sf}
	\frac14\mup -K_{16}
	\geqslant
	\frac{\kappa\lambda_1}{4},
\end{gather}
\begin{gather}
	\label{h1-t2}
	c_1h|\Omega|^{-1/2}\leqslant\frac18,\quad 
	2c_2^2h^4\mu\lam_1|\Omega|^{-1}\leqslant \frac{1}{8},\\
	\label{h2-t2}
	\mup(c_1^2h^2+c_2h^2)\leqslant \frac{\nu}{2}\,,
\end{gather}
where the constants $\epsilon_2, \tilde{K}_1, K_{16}$, being independent of $\mu$ and $h$, are defined in \eqref{weqn2}, \eqref{K1} and \eqref{k-16}. 

The uniqueness of bounded solutions follows from Proposition \ref{cor-Lip}. In this section, we prove the existence of strong solutions.

\begin{Rmk}
	Assumptions \eqref{mu4} and \eqref{mu3-sf} are not needed for the proof of existence; they are used to prove the uniqueness of bounded solution.
\end{Rmk}

%{{Before deriving these bounds, we sketch the standard argument for establishing the existence of strong solutions to \eqref{eq-aux0}.}} 

\textbf{Step 1.} Let $k$ be a fixed positive integer. For $n\geqslant r$, where $r\in\N$ is fixed in \eqref{E1}, we consider a Galerkin approximation for system \eqref{eq-aux0}:
\begin{align}\label{gal}
	\dtf{w_n}+\nu A_0w_n+P_{0,n}B_0(w_n,w_n)
	&=P_{0,n}\PL(g\eta_n \eb_2)
	-\mup P_{0,n}(\Iht w_n-v)\,,\\ \notag
	\frac{d\eta_n}{dt}
	+\kappa A_1\eta_n
	+P_{1,n}B_1(w_n,\eta_n)
	&=P_{1,n}\left(
	\frac{w_n\cdot \eb_2}{l}
	\right)\,,
\end{align}
with initial data 
\begin{align}\label{galini}
	w_n(-k(\nu\lam_1)^{-1})=0\,,\quad \eta_n(-k(\nu\lam_1)^{-1})=0,
\end{align}
where $P_{i,n}$ is the orthogonal projection onto $H_{i,n}=\mathrm{span}\{\zeta_{i,1},\cdots,\zeta_{i,n} \}$. This is a finite system of ODEs with a quadratic polynomial nonlinearity. Hence, there exists $T_n>-k(\nu\lam_1)^{-1}$, so that there exists a solution $(w_n,\eta_n)$ to the initial value problem on the interval $[-k(\nu\lam_1)^{-1}, T_n)$. 

Thanks to the initial conditions \eqref{galini}, following the 
%energy estimates (which might blow up in finite time) that were introduced 
approach used
to prove the existence and uniqueness of strong solutions for the Navier-Stokes equations and the RB system (see, e.g., \cite{constantin1988navier,temam2012infinite}), one can show by energy estimates that there exists $T_*>-\kt$, independent of $n$, such that solutions of \eqref{gal} exist on $[-\kt,T_*]$ and satisfy uniform bounds, in the relevant strong norms, which are independent of $n$. Therefore, by the Aubin-Lions compactness theorem, there exists a subsequence $\{(w_{n(j),k},\eta_{n(j),k})\}_{j=1}^\infty$ which converges to a unique strong solution $(w^{(k)},\eta^{(k)})$ to system \eqref{eq-aux0} on a common interval $[-k(\nu\lam_1)^{-1}, T_*]$ with initial data $w^{(k)}(-\kt)=0$ and $\eta^{(k)}(-\kt)=0$.
Let $[-k(\nu\lam_1)^{-1},T_{**})$ be the maximum forward interval of existence for $(w^{(k)},\eta^{(k)})$.  Note that $T_{**}\geqslant T_{*}$ and that from the above mentioned energy type estimates we have
\begin{align*}
	(w^{(k)},\eta^{(k)})\in C\Big([-k(\nu\lam_1)^{-1},T_{**}),V_0\times V_1\Big)\cap L_{\loc}^2\Big([-k(\nu\lam_1)^{-1},T_{**}),D(A_0)\times D(A_1)\Big).
\end{align*}

\textbf{Step 2.} Assume that $T_{**}<\infty$. In Section \ref{ns_bnd} and Section \ref{sf_bnd}, for the no-slip and stress-free cases respectively,
we show on the maximum interval of existence $\kintv$ for $(w^{(k)},\eta^{(k)})$  uniform (in time $t$)
bounds on the following quantities (omitting the superscript $k$ for simplicity)
\begin{gather}\label{bnds1}
	|\eta|^2,\  
	|w|^2, \ 
	\Norm{w}^2,\
	\intavee|A_0w(\tau)|^2\, d\tau\,,
	\\ \label{bndsb1}
	\Norm{\eta}^2,\ 
	\intavee|A_1\eta(\tau)|^2\,d\tau,\ 
\end{gather}
where $T:=\tunit$.
%and we use the convention that $\min(M,T_{**})=M$ for any real number $M$ if $T_{**}=\infty$.

%Also, they are independent of whether $T_{**}$ is finite. The bounds on \eqref{bndsb} imply that for any integer $m$ with $-\kt<m\tunit< T_{**}$, we have
%	\begin{align}\label{ave_bnd}
%		\int_{-\kt}^{m\tunit}|A_0w(\tau)|^2\,d\tau<\infty,\quad 
%		\int_{-\kt}^{m\tunit}\Abs{A_1\eta(\tau)}^2\, d\tau<\infty\,.
%	\end{align}

\begin{Rmk}\label{bndrmk}
	All the bounds for \eqref{bnds1} will be \emph{independent} of $k$ and $T_{**}$. On the other hand, bounds for \eqref{bndsb1} in this step may depend on $k$; we will however, improve in the next step the bounds so that they will be independent of $k$ and $T_{**}$.
\end{Rmk}

For the no-slip case, the bounds \eqref{etabndK}, \eqref{wH1nst2},  \eqref{ave_A0w0}, \eqref{etaH1bnd10b} and \eqref{ave_A1eta0} in Section \ref{ns_bnd} imply that the solution $(w^{(k)},\eta^{(k)})$ cannot blow up in the space 
\[C\big([-k(\nu\lam_1)^{-1},T_{**}),V_0\times V_1\big)\cap L_{\loc}^2\big([-k(\nu\lam_1)^{-1},T_{**}),D(A_0)\times D(A_1)\big),
\]
and thus we may extend it beyond $T_{**}$, which contradicts the maximality of $T_{**}$. Therefore, we must have $T_{**}=\infty$.

The same argument works for the stress-free case by considering the bounds \eqref{wVbnd}, \eqref{eta1bnd}, \eqref{ave_bnd_sf0},  \eqref{c50} and \eqref{Aeta_sf0} in Section \ref{sf_bnd}.

\textbf{Step 3.} 
%We use the convention that $\min(M,\infty)=M$ for any real number $M$. 
For $\vk$, we show uniform bounds on the interval $\Ik:=[-\kt+\tunit,\infty)$, for all the quantities in \eqref{bnds1} and \eqref{bndsb1}. These bounds will all be independent of $k$. {Note that we need the extra time unit $\tunit$ in $\Ik$ due to the use of Lemma \ref{Gronwall1}.}

By Remark \ref{bndrmk}, the uniform bounds for \eqref{bnds1} in Step 2, i.e., 
\begin{enumerate}[(i)]
	\item  no-slip: \eqref{etabndK}, \eqref{wH1nst2},  \eqref{ave_A0w0};
	\item  stress-free: \eqref{wVbnd}, \eqref{eta1bnd}, \eqref{ave_bnd_sf0}, 
\end{enumerate}
are all valid on the interval $[-\kt,\infty)$ and particularly on $\Ik$; they are independent of $k$.

For the no-slip case, in subsection \ref{eta-H1-ns}, letting $\alpha_k=T=\tunit$ and $t_1=T_{**}=\infty$, by \eqref{etaH1bnd10},
we have a uniform bound on the interval $\Ik$ for $\Norm{\eta}^2$, where $C_3$ in \eqref{etaH1bnd10} is now independent of $k$. It follows that the uniform bound \eqref{ave_A1eta} is also valid for $t\in\Ik$.

The similar argument works for the stress-free case by considering   \eqref{c5} and \eqref{Aeta_sf} in 
subsection \ref{eta-H1-sf}.

\textbf{Step 4.}
For each positive integer $m$, consider a (sub)sequence of solutions
$\{(w^{(k)},\eta^{(k)})\}_{k=m+1}^\infty$. By Step 3, this sequence satisfies all the uniform bounds on \eqref{bnds1} and \eqref{bndsb1} (with $T_{**}=\infty$) on the interval $\mathcal{I}_{m+1}=[-m(\nu\lam_1)^{-1}, \infty)$, and in particular  on $[-m(\nu\lam_1)^{-1}, m(\nu\lam_1)^{-1}]$. Thus,
\begin{align}\label{ave_bndc}
	\int_{-m\tunit}^{m\tunit}|A_0w^{(k)}(\tau)|^2\,d\tau<\infty,\quad 
	\int_{-m\tunit}^{m\tunit}|{A_1\eta^{(k)}(\tau)}|^2\, d\tau<\infty\,,
\end{align}
where the bounds in \eqref{ave_bndc} may depend on $m$, but are independent of $k$. In particular, \eqref{ave_bndc} implies that
\begin{align}\label{ave_bndc2}
	\int_{-m\tunit}^{m\tunit}\left|\frac{dw^{(k)}(\tau)}{d\tau}\right|^2\,d\tau<\infty,\quad 
	\int_{-m\tunit}^{m\tunit}\left|\frac{d\eta^{(k)}(\tau)}{d\tau}\right|^2\, d\tau<\infty\,,
\end{align}
are bounded uniformly in $k$, with bounds that may depend on $m$. 

Applying the Aubin-Lions compactness theorem using  \eqref{ave_bndc}, \eqref{ave_bndc2}, and the uniform, with respect to $t$ and $k$, bounds on the quantities 
\[
|\eta^{(k)}|^2,\  
|w^{(k)}|^2, \ 
\Norm{w^{(k)}}^2,\
\Norm{\eta^{(k)}}^2,\quad t\in [-m\tunit,m\tunit],
\]
we obtain a subsequence $\{(w^{(k_{l},m)},\eta^{(k_{l},m)})\}_{l=1}^\infty$ that converges to a solution of system \eqref{eq-aux0} on the closed interval $[-m\tunit,m\tunit]$. 

We then apply the Cantor diagonal process to nested subsequences, relabeling when necessary, to get a subsequence $\{(w^{(k_{m},m)},\eta^{(k_{m},m)})\}_{m=1}^\infty$ that converges to a solution $(w,\eta)$ on $[-M\tunit,M\tunit]$ for all $M\in\N$. Note that $(w,\eta)$ is defined on $(-\infty,\infty)$. Hence, $(w,\eta)$ satisfies all the uniform bounds on \eqref{bnds1} and \eqref{bndsb1} for $t\in\R$ and thus \eqref{aux-soln}. The proof of Proposition \ref{Wmap-Exst-t1} is complete.

%  By extracting further subsequences and applying the Cantor diagonal argument, we obtain a subsequence of $\{(w^{(k)},\eta^{(k)})\}_{k=1}^\infty$ that converges to a solution $(w,\eta)$ defined on $(-\infty,\infty)$, which satisfies all the bounds \eqref{etabndK}, \eqref{wH1nst2}, \eqref{etaH1bnd1}, \eqref{ave_A0w},  and \eqref{ave_A1eta} for the no-slip case and all the bounds \eqref{wVbnd}, \eqref{eta1bnd}, \eqref{c5}, \eqref{ave_bnd_sf} and \eqref{Aeta_sf} for the stress-free case. 

\subsection{No-slip BCs (bounds on $[-\kt,T_{**})$ with $T_{**}<\infty$)}\label{ns_bnd}
For simplicity, we will omit the superscript $k$ in $(w^{(k)},\eta^{(k)})$ in this section and the next (stress-free BCs). All estimates are rigorous on the maximal interval $\kintv$.

\subsubsection{Bound for $|\eta|$}
By a similar argument as in \cite[Lemma 2.1]{foias1987attractors}, we can show, by employing the maximum principle for the heat equation, that (see the Appendix)
\begin{align}\label{etabndK}
	|\eta(t)|\leqslant 2|\Omega|:=K,\quad \forall\  t\in[-\kt,T_{**}).
\end{align}

\subsubsection{Bounds for $|w|$ and $\Norm{w}$}
%Now we derive bounds for $|w|$ and $\Norm{w}$ on the interval $[-\kt,T_{**})$ and show later that $T_{**}=\infty$.
Taking the $L^2$ inner product of the auxiliary equation \eqref{eq-aux} with $w$ and $A_0w$ respectively, we have
\begin{gather}\label{w}
	\frac{1}{2}\frac{d}{dt}|w|^2
	+\nu\Norm{w}^2
	=g\Innerr{\eta \eb_2}{w}
	-\mup\Innerr{\Iht w-v}{w}\,,\\ \label{w2}
	\frac{1}{2}\frac{d}{dt}\Norm{w}^2+\nu\Abs{A_0w}^2
	+\Innerr{B_0(w,w)}{A_0w}
	=g\Innerr{\eta e_2}{A_0w}
	-\mup\Innerr{\Iht w-v}{A_0w}\,,
\end{gather}
where we use $b_0(w,w,w)=0$.
By the Cauchy-Schwarz, Young and Poincar\'e inequalities, we have
\begin{align}\label{t2leq1}
	-\mup\Innerr{\Iht w&-v}{w}
	\leqslant
	\mup
	\Big[
	|\Innerr{\Iht w-w}{w}|
	+|\Innerr{v}{w}|-(w,w)
	\Big]
	\\
	&\leqslant 
	\mup
	\Big[
	c_1h\Norm{w}\cdot\Abs{w}
	+c_2h^2|A_0w|\cdot|w|
	+\Abs{v}\cdot\Abs{w}
	-\Abs{w}^2
	\Big]
	\quad (\textrm{by Remark \ref{Iht}})
	\notag\\
	&\leqslant
	\mup
	\bigg[
	c_1h\lambda_1^{-1/2}\Norm{w}^2
	+2c_2^2h^4|A_0w|^2
	+2|v|^2
	-\frac{3}{4}\Abs{w}^2
	\bigg]
	\notag\\
	&\leqslant
	\frac{\nu}{4}\Norm{w}^2
	+ \frac{\nu}{8}\lambda_1^{-1}|A_0w|^2
	+2\mup |v|^2
	-\frac{3}{4}\mup|w|^2\quad (\textrm{by \eqref{muh}})\,,
	\notag
\end{align}
and 
\begin{align}\label{t2leq2}
	-&\mup\Innerr{\Iht w-v}{A_0w}
	\leqslant
	\mup
	\Big[
	|\Innerr{\Iht w-w}{A_0w}|
	+|\Innerr{v}{A_0w}|-(w,A_0w)
	\Big]
	\\ \notag
	&=	\mup
	\Big[
	|\Innerr{\Iht w-w}{A_0w}|
	+|(({v},{w}))|-(w,A_0w)
	\Big] \quad \textrm{(since $v(t)\in V_0$)}
	\\
	&\leqslant 
	\mup
	\Big[
	c_1h\Norm{w}\cdot\Abs{A_0w}
	+c_2h^2|A_0w|^2
	+\Norm{v}\cdot\Norm{w}
	\Big]
	-\mup\Norm{w}^2
	\quad (\textrm{by Remark \ref{Iht}})
	\notag\\
	&\leqslant
	\mup
	\bigg[
	c_1h\lambda_1^{-1/2}\Abs{A_0w}^2
	+c_2h^2|A_0w|^2
	+\Norm{v}^2
	-\frac{3}{4}\Norm{w}^2
	\bigg]
	\notag\\
	&\leqslant
	\frac{\nu}{8}\Abs{A_0w}^2
	+\mup \|v\|^2
	-\frac{3}{4}\mup\|w\|^2\qquad (\text{by \eqref{muh}}).
	\notag
\end{align}

For the nonlinear term, we have
\begin{align}\label{t2neq}
	|\Innerr{B_0(w,w)}{A_0w}|&\leqslant 
	\Norm{w}_{L^4}^2\Norm{\nabla w}_{L^4}^2|A_0w|
	\quad (\text{H\"{o}lder})
	\\ \notag
	&\leqslant c_L^2|w|^{1/2}\Norm{w}\cdot\Abs{A_0w}^{3/2}
	\quad (\text{Ladyzhenskaya})
	\\ \notag
	%&\leqslant  \frac{3\epsilon^{4/3}}{4}\Big(\Abs{A_0w}^{3/2}\Big)^{4/3}
	%+\frac{1}{4\epsilon^{4}}\Big(c_1\Abs{w}^{1/2}\Norm{w}\Big)^4
	%\\
	&\leqslant \frac{\nu}{8}|A_0w|^2
	+K_1|w|^2\Norm{w}^4,\quad K_1:=\frac{27c_L^8}{2\nu^3}.
\end{align}
Combining \eqref{etabndK}--\eqref{t2neq}, we get
\begin{align}
	&\frac12\dt(|w|^2+\lambda_1^{-1}\Norm{w}^2)
	+\nu (\Norm{w}^2+\lambda_1^{-1}|A_0w|^2)
	\\ \notag
	&\leqslant 
	g|\eta| |w|
	+\frac{\nu}{4}\Norm{w}^2
	+\frac{\nu}{8}\lambda_1^{-1}|A_0w|^2
	+2\mup |v|^2-\frac34\mup |w|^2
	\\ \notag
	&\quad +\lambda_1^{-1}
	\Big(
	g|\eta| |A_0w|
	+\frac{\nu}{8} |A_0w|^2	
	+\mup \Norm{v}^2
	-\frac34\mup \Norm{w}^2
	\Big)
	%\\ \notag
	%&\quad
	+\lambda_1^{-1} 
	\Big(
	\frac{\nu}{8}|A_0w|^2
	+ K_1|w|^2\Norm{w}^4
	\Big)
	\\ \notag
	&\leqslant
	\frac{g^2K}{2\nu\lambda_1} 
	+\frac{\nu\lambda_1}{2}|w|^2
	+\frac{\nu}{4}\Norm{w}^2
	+\frac{3\nu}{8}\lambda_1^{-1}|A_0w|^2
	+\frac{2g^2K}{\nu\lambda_1}+\frac{\nu}{8}\lambda_1^{-1}|A_0w|^2
	\\ \notag
	&\quad +3\mup \Norm{v}_X^2\nu^2
	-\frac34\mup (|w|^2+\lambda_1^{-1}\Norm{w}^2)
	%\\ \notag
	%&\quad
	+\lambda_1^{-1} K_1|w|^2\Norm{w}^4\,.
\end{align}
Hence, 
\begin{align}\label{t2mu}
	\frac12\dt(|w|^2+\lambda_1^{-1}\Norm{w}^2)
	&+\frac12\mup(|w|^2+\lambda_1^{-1}\Norm{w}^2)
	\\ \notag
	&+\big(
	\frac14\mup
	-K_1|w|^2\Norm{w}^2
	\big)\lambda_1^{-1}
	\Norm{w}^2
	+\frac{\nu\lam_1^{-1}}{2}|A_0w|^2
	\\ \notag
	&\leqslant 
	3\mup \Norm{v}_X^2\nu^2
	+\frac{5g^2K}{2\nu\lambda_1} 
	\\ \notag
	%	&\leqslant 
	%	3\mup \rho^2\nu^2 
	%	+\frac{5g^2K}{2\nu\lambda_1\rho^2}\rho^2
	%	\\ \notag
	&\leqslant 
	\mup C_1\rho^2 \quad (\textrm{by \eqref{mu5}})\,,\quad C_1:=4\nu^2\,.
\end{align}

We now show that 
\begin{align}\label{wH1nst2}
	|w|^2+\lambda_1^{-1}\Norm{w}^2
	\leqslant
	4C_1\rho^2,\quad t\in[-\kt,T_{**})\,.
\end{align}
%Let $k$ be a positive integer. Consider the following initial value problem:
%\begin{align}\label{g1-nst2}
%	\dtf{w}
%	+\nu A_0w
%	+ B_0(w,w)
%	& =P(g\eta \eb_2)
%	-\mup P(\Iht w-v),
%	\\ \label{g2-nst2}
%	w(-k)&=0,
%\end{align}
By continuity and the initial condition $w(-\kt)=0$, there exists $t_*\in\kintv$ such that
\begin{align*}
	|w|^2+\lambda_1^{-1}\Norm{w}^2
	\leqslant 4C_1\rho^2,\quad t\in[-\kt,t_*]\,.
\end{align*}
It then follows from \eqref{wH1nst2} and \eqref{mu3} that
\begin{align*}
	\frac14\mup 
	-K_1|w|^2\Norm{w}^2\geq 0\,,\quad t\in[-\kt,t_*]\,.
\end{align*}
Let 
\begin{align*}
	\tilde{T}
	=\sup\big\{\tau\in[-\kt,T_{**}) :  |w(t)|^2+\lambda_1^{-1}\Norm{w(t)}^2
	\leqslant 4C_1\rho^2 
	\textrm{ for all $t\in[-\kt,\tau]$}
	\big\}.
\end{align*}
Notice that $\tilde{T}\geqslant t_*>-\kt$.
We claim that $\tilde{T}=T_{**}$. If not, then $\tilde{T}<T_{**}$, and
\begin{align}\label{wnT-nst2}
	|w(\tilde{T})|^2&+\lambda_1^{-1}\Norm{w(\tilde{T})}^2= 4C_1\rho^2,\\
	\label{ineq_Aw_ns}
	\frac12\dt(|w|^2+\lambda_1^{-1}\Norm{w}^2)
	&+\frac12\mup(|w|^2+\lambda_1^{-1}\Norm{w}^2)\\\notag
	&+\frac{\nu\lam_1^{-1}}{2}|A_0w|^2
	\leqslant 
	\mup C_1\rho^2,
	\quad \forall \ t\in[-\kt,\tilde{T}].
\end{align}
Dropping the term $\frac{\nu\lam_1^{-1}}{2}|A_0w|^2$, we have by the Gronwall inequality that
\begin{align*}
	|w(\tilde{T})|^2+\lambda_1^{-1}\Norm{w(\tilde{T})}^2
	\leqslant
	2C_1\rho^2(1-e^{\mup (-\kt-\tilde{T})})
	< 2C_1\rho^2,
\end{align*}
which contradicts \eqref{wnT-nst2}. 

\subsubsection{Bound for $\int_{t}^{\min(t+T,T_{**})}|A_0w(\tau)|^2\,d\tau$}
Henceforth, we let $T=\tunit$.

Inequality \eqref{ineq_Aw_ns} implies that
\begin{align*}
	\frac12\dt(|w|^2+\lambda_1^{-1}\Norm{w}^2)
	+\frac{\nu\lam_1^{-1}}{2}|A_0w|^2
	\leqslant 
	\mup C_1\rho^2.
\end{align*}
For any $t\in\kintv$, integrating on both sides from $t$ to $\min(t+T,T_{**})$, observing that 
$
\min(t+T,T_{**})-t\leqslant T,
$
and using the bound \eqref{wH1nst2},
we have
\begin{align}\label{ave_A0we}
	{\nu}\intavee|A_0w(\tau)|^2\,d\tau
	\leqslant
	{4C_1\rho^2\lam_1}
	+T\mup^2 C_1\rho^2.
\end{align}
Since $T_{**}<\infty$, it follows that
\begin{align}\label{ave_A0w0}
	{\nu}\int_{-\kt}^{T_{**}}|A_0w(\tau)|^2\,d\tau
	<\infty.
\end{align}

\subsubsection{Bound for $\Norm{\eta}$}\label{eta-H1-ns}
Taking the $L^2$ inner product of the equation \eqref{eq-aux2} with $\eta$, and applying the Cauchy-Schwarz and Young inequalities, we have
\begin{align}\label{etaine}
	\frac{1}{2}\frac{d}{dt}|\eta|^2
	+\kappa\Norm{\eta}^2
	\leqslant
	\frac{\kappa\lambda_1}{4}|\eta|^2
	+\frac{1}{\kappa l^2\lambda_1}\Abs{w}^2.
\end{align}
Let $\ktilde=\kt$ and $\alpha_k=\frac{T_{**}+\ktilde}{2}$.
For any $t\in[-\ktilde,-\ktilde+\alpha_k)$, integrating \eqref{etaine} from $t$ to $t+\alpha_k$, we have
%\begin{align}\label{etaH1ave0}
%	{\kappa}\int_{-\kt}^{T_{**}}\Norm{\eta(\tau)}^2\ d\tau
%	&\leqslant 
%	\frac{K^2}{2}
%	+
%	(T_{**}+\kt)
%	\left(
%	\frac{\kappa \lambda_1K^2}{4}
%	+\frac{4C_1\rho^2}{\kappa l^2\lambda_1}
%	\right)<\infty.
%\end{align}
\begin{align}\label{etaH1ave1}
	{\kappa}\int_{t}^{t+\alpha_k}\Norm{\eta(\tau)}^2\ d\tau
	&\leqslant 
	\frac{K^2}{2}+
	\alpha_k
	\left(
	\frac{\kappa \lambda_1K^2}{4\rho^2}
	+\frac{4C_1}{\kappa l^2\lambda_1}
	\right)
	\rho^2=:\beta_k.
	%\frac{\kappa}{T}\int_{t}^{t+\alpha}\Norm{\eta(\tau)}^2\ d\tau
	%&\leqslant 
	%\left(
	%\frac{K^2}{2\rho^2T}
	%+\frac{\kappa \lambda_1K^2}{4\rho^2}
	%+\frac{4C_1}{\kappa l^2\lambda_1}
	%\right)
	%\rho^2=:C_2\rho^2.
\end{align}

By taking the $L^2$ inner product of the equation (\ref{eq-aux2}) with $A_1\eta$, we have 
\begin{align}\label{eta-ineq}
	\frac{1}{2}\frac{d}{dt}\Norm{\eta}^2
	+\kappa|A_1\eta|^2
	+(B_1(w,\eta),A_1\eta)=\frac{(w\cdot \eb_2, A_1\eta)}{l}
	\leqslant
	\frac{\kappa}{4}|A_1\eta|^2+ \frac{1}{l^2\kappa}|w|^2.
\end{align}
Integrating by parts, we have (as in \cite[(3.22)]{Farhat2015continuous})
\begin{align}
	\Abs{\Innerr{B_1(w,\eta)}{A_1\eta}}
	&\leqslant
	\Norm{w}\cdot\Norm{\nabla\eta}_{L^4}^2 
	\quad {\textrm{(H\"{o}lder)}}
	\\ \notag
	& \leqslant
	c_L\Norm{w}\cdot\Norm{\eta}\cdot|A_1\eta|\quad (\textrm{Ladyzhenskaya})
	\\ \notag
	&
	\leqslant
	\frac{c_L^2}{\kappa}\Norm{w}^2\Norm{\eta}^2
	+\frac{\kappa}{4}\Abs{A_1\eta}^2.
\end{align}
Consequently, 
\begin{gather}\label{etaH1ineq}
	\dt\Norm{\eta}^2+\kappa\Abs{A_1\eta}^2
	\leqslant
	\frac{2c_L^2}{\kappa}\Norm{w}^2\Norm{\eta}^2
	+\frac{2}{l^2\kappa}\Abs{w}^2
	\leqslant
	\frac{8c_L^2 C_1\lambda_1}{\kappa}\rho^2\Norm{\eta}^2
	+\frac{8C_1}{l^2\kappa}\rho^2\,.
\end{gather}

We now recall the following uniform Gronwall inequality from \cite{foias1987attractors}. 
\begin{Lem}[Uniform Gronwall]\label{Gronwall1}
	Let $g$, $h$, $y$ be three positive locally integrable functions on $(t_0,t_1)$ which satisfy for all $t$ with $t_0\leqslant t<t+\alpha< t_1$,
	\begin{align*}
		\frac{dy}{dt}\leqslant gy+h,\quad
		\int_{t}^{t+\alpha}g(\tau)\ d\tau\leqslant a_1,\quad
		\int_{t}^{t+\alpha}h(\tau)\ d\tau\leqslant a_2,\quad
		\int_{t}^{t+\alpha}y(\tau)\ d\tau\leqslant a_3,
	\end{align*}
	where $a_1, a_2, a_3, \alpha$ are positive constants. Then
	\begin{align*}
		y(t+\alpha)\leqslant
		\left(\frac{a_3}{\alpha}+a_2\right)e^{a_1},\quad t_0\leqslant t< t+\alpha<t_1\,.
	\end{align*}
\end{Lem}

Applying  Lemma \ref{Gronwall1} to \eqref{etaH1ineq} with 
\begin{gather*}
	t_0=-\kt,\quad
	t_1=T_{**}, \quad
	\alpha = \alpha_k,
	\\
	g(t) = \frac{8c_L^2C_1\lam_1\rho^2}{\kappa},\quad
	h(t) = \frac{8C_1\rho^2}{l^2\kappa},\quad
	y(t)=\Norm{\eta(t)}^2,
	\\
	a_1=\frac{8c_L^2 C_1\lambda_1}{\kappa}\rho^2 \alpha,\quad
	a_2=\frac{8C_1}{l^2\kappa}\rho^2 \alpha,\quad
	a_3=\frac{\beta_k}{\kappa} \alpha \,,
\end{gather*}
we get
%\begin{align}\label{etaH1bnd0}
%\Norm{\eta(t+T)}^2\leqslant
%(\frac{a_3}{T}+a_2)e^{a_1},\quad t\geqslant t_0\,.
%\end{align}
%But \eqref{etaH1bnd0} is true for all $t_0\in\R$ and $a_1, a_2, a_3$ do not depend on $t_0$. Hence, 
\begin{align}\label{etaH1bnd10}
	\sup_{t\in[-\tilde{k}+\alpha,T_{**})}\Norm{\eta(t)}^2\leqslant
	\left(
	\frac{a_3}{\alpha}+a_2
	\right)e^{a_1}=:C_3\,,
\end{align}
and thus
\begin{align}\label{etaH1bnd10b}
	\sup_{t\in[-\tilde{k},T_{**})}\Norm{\eta(t)}^2\leqslant
	\left(
	\frac{a_3}{\alpha}+a_2
	\right)
	e^{a_1}
	+\sup_{t\in[-\tilde{k},-\tilde{k}+\alpha]} \Norm{\eta(t)}^2
	<\infty.
\end{align}

%\begin{align}\label{etaH1bnd1}
%\sup_{t\in[-\kt+T,T_{**})}\Norm{\eta(t)}^2\leqslant
%(\frac{a_3}{T}+a_2)e^{a_1}=:C_3\,.
%\end{align}

\subsubsection{Bound for $\intavee|A_1\eta(\tau)|^2\,d\tau$}\label{Aeta}

For any $t\in[-\ktilde+\alpha_k,T_{**})$, inserting the bound \eqref{etaH1bnd10} in \eqref{etaH1ineq} and then integrating from $t$ to $\min(t+T,T_{**})$ on both sides, we have
\begin{gather}\label{ave_A1eta}
	{\kappa}\intavee\Abs{A_1\eta(\tau)}^2\, d\tau
	\leqslant
	{ C_3}
	+
	\left(
	\frac{8c_L^2 C_1\lambda_1 C_3}{\kappa}
	+\frac{8C_1}{l^2\kappa}
	\right)
	\rho^2T\,.
\end{gather}
Since $T_{**}<\infty$, it follows that
\begin{gather}\label{ave_A1eta0}
	\int_{-\kt}^{T_{**}}\Abs{A_1\eta(\tau)}^2\, d\tau
	%\leqslant
	%\int_{-\kt}^{-\kt+\alpha_k}\Abs{A_1\eta(\tau)}^2\, d\tau+
	%\int_{-\kt+\alpha_k}^{T_{**}}\Abs{A_1\eta(\tau)}^2\, d\tau
	<\infty.
\end{gather}

\subsection{Stress-free BCs (bounds on $[\kt,T_{**})$ with $T_{**}<\infty$)}\label{sf_bnd}
The argument using the maximum principle for showing the bound for $|\eta|$ in Section \ref{ns_bnd} also works here. Taking advantage of the orthogonality property that $b_0(w,w,A_0w)=0$ in the case of stress-free BCs, we combine the estimates of $\VNorm{w}$ and $|\eta|$ together.

\subsubsection{Bounds for $\|w\|_{V_0}$ and $|\eta|$ }
Taking the $L^2$ inner products of the auxiliary system (\ref{eq-aux0}) with $w$, $A_0w$ and $\eta$ repectively, we have
\begin{align}\label{weqn1}
	\epsilon_1\left(
	\frac12\dt |w|^2+\nu \Norm{w}^2
	\right)
	&=
	\epsilon_1
	\left(
	g(\eta \eb_2, w)
	-\mup(\Iht w-v, w)
	\right),\quad \epsilon_1:=\frac{1}{|\Omega|},
	\\
	\label{weqn}
	\frac12\dt\Norm{w}^2+\nu |A_0w|^2
	&=
	g(\eta \eb_2, A_0w)
	-\mup(\Iht w-v,A_0w),
	\\
	\label{weqn2}
	\epsilon_2\left(\frac12\dt|\eta|^2+\kappa\Norm{\eta}^2\right)
	&=
	\epsilon_2\left(\frac{(w\cdot \eb_2,\eta)}{l}\right),\quad 
	\epsilon_2 := (\nu\lambda_1)^2,
\end{align}
where we used $b_0(w,w,w)=0$, $b_0(w,w,A_0w)=0$ and $b_1(w,\eta,\eta)=0$. Note that equations \eqref{weqn1}--\eqref{weqn2} have the same dimension and no nonlinear term appears in the equations above. 

Now we estimate the right-hand sides of the three equations above as follows:
\begin{align}\label{wineq1}
	-&{\mup}{\epsilon_1}(\Iht w-v, w) 
	\leqslant 
	{\mup}{\epsilon_1}
	\left(|(\Iht w-w,w)|
	+|(v,w)|
	-(w,w)\right)
	\\ \notag
	&\leqslant
	\frac{\mup}{|\Omega|}
	\left(c_1h\VNorm{w}\cdot|w|
	+c_2h^2|A_0w|^2\cdot|w|
	+\Abs{v}^2
	+\frac{1}{4}\Abs{w}^2
	-\Abs{w}^2\right)
	\quad(\textrm{by Remark \ref{Iht}})
	\\ \notag
	&\leqslant 
	\frac{\mup}{|\Omega|}
	\left(c_1h|\Omega|^{1/2}\VNorm{w}^2
	+2c_2^2h^4|A_0w|
	+\frac18|w|^2
	+\Abs{v}^2
	+\frac{1}{4}\Abs{w}^2
	-\Abs{w}^2\right)
	\quad (\textrm{by \eqref{u-Poin}})
	\\ \notag
	&\leqslant
	\frac18\mup\VNorm{w}^2
	+\frac\nu8|A_0w|^2
	+\mup{\epsilon_1}\Abs{v}^2
	-\frac34{\mup}{\epsilon_1}|w|^2
	+\frac18\mup\epsilon_1|w|^2
	\quad 
	(\text{by \eqref{h1-t2}})
	\\ \notag
	&\leqslant
	\frac14\mup\VNorm{w}^2
	+\mup{\epsilon_1}\Abs{v}^2
	-\frac34{\mup}{\epsilon_1}|w|^2
	+\frac\nu8|A_0w|^2
\end{align}
\begin{align}\label{wineq2}
	-\mup&(\Iht w-v, A_0w) 
	\leqslant 
	\mup
	\left(
	|(\Iht w-w,A_0w)|
	+|(v,A_0w)|
	-(w,A_0w)
	\right)
	\\ \notag
	&\leqslant
	\mup
	\left(c_1h\VNorm{w}\cdot|A_0w|
	+c_2h^2|A_0w|^2
	+\Norm{v}^2
	+\frac{1}{4}\Norm{w}^2
	-\Norm{w}^2\right)
	\quad(\textrm{by Remark \ref{Iht}})
	\\ \notag
	&\leqslant 
	\mup
	\left(\frac14\VNorm{w}^2
	+c_1^2h^2|A_0w|^2
	+c_2h^2|A_0w|^2
	+\Norm{v}^2
	+\frac{1}{4}\Norm{w}^2
	-\Norm{w}^2\right)\\ \notag
	&\leqslant
	\frac14\mup\VNorm{w}^2
	+\frac{\nu}{8}|A_0w|^2
	+\mup\Norm{v}^2
	-\frac34\mup\Norm{w}^2 
	%(\mupc_0^2h^2\leqslant\frac{\nu}{2})
	\quad 
	%(\mup(c_1^2h^2+c_2h^2)\leqslant \frac{\nu}{2}).
	(\text{by \eqref{h2-t2}})\,.
\end{align}
%
%By the Cauchy-Schwarz and Poincar\'{e} inequalities, 
\begin{align}\label{wineq3}
	{\epsilon_1}|(g\eta \eb_2,w)|
	&\leqslant
	\frac{g}{|\Omega|}|\eta|\cdot|w|
	\leqslant
	\frac{g\lambda_1^{-1/2}}{|\Omega|}\Norm{\eta}\cdot|w|\\\notag
	&\leqslant 
	\frac{\kappa\epsilon_2}{8}\Norm{\eta}^2
	+\frac{2}{\kappa\epsilon_2}\frac{g^2\lambda_1^{-1}}{|\Omega|^2}|w|^2
	\leqslant
	\frac{\kappa\epsilon_2}{8}\Norm{\eta}^2
	+\frac{2}{\kappa\epsilon_2}
	\frac{g^2\lambda_1^{-1}}{|\Omega|}\VNorm{w}^2,
\end{align}
\begin{align}\label{wineq4}
	|(g\eta \eb_2, A_0w)|
	\leqslant
	g\Norm{\eta \eb_2}\cdot\Norm{w}
	\leqslant
	\frac{\kappa \epsilon_2}{8}\Norm{\eta}^2
	+\frac{2g^2}{\kappa \epsilon_2}\Norm{w}^2
	\leqslant
	\frac{\kappa \epsilon_2}{8}\Norm{\eta}^2
	+\frac{2g^2}{\kappa \epsilon_2}\VNorm{w}^2,
\end{align}
\begin{align}\label{wineq5}
	\frac{\epsilon_2}{l}|(w\cdot \eb_2,\eta)|
	\leqslant
	\frac{\epsilon_2}{l} |w\cdot \eb_2|\cdot |\eta|
	\leqslant
	\frac{\tilde{K}_1\epsilon_2}{l}\VNorm{w}\Norm{\eta} 
	\leqslant
	\frac{\kappa\epsilon_2}{4}\Norm{\eta}^2
	+\frac{\tilde{K}_1^2\epsilon_2}{\kappa l^2}\VNorm{w}^2,
\end{align}
where
\begin{align}\label{K1}
	\tilde{K}_1:=|\Omega|^{1/2}\lambda_1^{-1/2}.
\end{align}

Combining (\ref{weqn1})--\eqref{wineq5}, we have
\begin{align}\label{w_energy_ineq}
	&\ \ \ \ \frac12\dt\big(
	\epsilon_1|w|^2
	+\Norm{w}^2+\epsilon_2|\eta|^2
	\big)
	+\epsilon_1\nu\Norm{w}^2
	+\nu|A_0w|^2
	+{\kappa\epsilon_2}\Norm{\eta}^2
	\\ \notag
	&\leqslant
	\frac12\mup\VNorm{w}^2
	-\frac34\mup(\epsilon_1|w|^2+\Norm{w}^2)\\ \notag
	&\ \ \ \ +
	\bigg(
	\frac{2g^2\lambda_1^{-1}}{|\Omega|\kappa\epsilon_2}
	+\frac{2g^2}{\kappa\epsilon_2}
	+\frac{\tilde{K}_1^2\epsilon_2}{\kappa l^2}
	\bigg) 
	\VNorm{w}^2
	+\mup(\epsilon_1|v|^2+\Norm{v}^2)\\ \notag
	&\ \ \ \ +
	\frac12\kappa\epsilon_2\Norm{\eta}^2
	+\frac{\nu}{2}|A_0w|^2\,,
\end{align}
and thus, after dropping nonnegative terms on the left,
\begin{align*}
	&\ \ \ \ \frac12\dt\big(
	\VNorm{w}^2+\epsilon_2|\eta|^2
	\big)
	+
	\VNorm{w}^2
	\left(
	\frac14\mup
	-\bigg(
	\frac{2g^2\lambda_1^{-1}}{|\Omega|\kappa\epsilon_2}
	+\frac{2g^2}{\kappa\epsilon_2}
	+\frac{\tilde{K}_1^2\epsilon_2}{\kappa l^2}
	\bigg)
	\right)+
	\frac{\kappa\lambda_1}{2}\cdot \epsilon_2\Abs{\eta}^2 \\ \notag
	%	&\ \ \ \ +\frac{\nu}{2}|A_0w|^2+\epsilon_1\nu\Norm{w}^2\\ \notag	
	&\leqslant
	\mup\Norm{v}_X^2\nu^2\lambda_1\,,
\end{align*}
By (\ref{mu1-sf}), 
we have
\begin{align*}
	\dt\big(
	\VNorm{w}^2+\epsilon_2 \Abs{\eta}^2
	\big)
	+
	\big(\VNorm{w}^2+\epsilon_2\Abs{\eta}^2\big)
	\cdot 
	(\lambda_1\kappa)
	\leqslant
	2\mup\Norm{v}_X^2\nu^2\lambda_1\,,
\end{align*}
%Applying Lemma \ref{Gron0}, we have
which implies by the Gronwall inequality that
\begin{align}\label{wbnd0}
	\VNorm{w}^2+(\nu\lambda_1)^2|\eta|^2
	\leqslant
	\frac{2\mup}{\lambda_1\kappa}
	\Norm{v}_X^2\nu^2\lambda_1\,,
\end{align}
and in particular
\begin{align}\label{etabnd0}
	|\eta|^2\leqslant \tilde C_0 \mu \Norm{v}_X^2\,,\quad 
	\tilde C_0 := 
	\frac{2\nu\lambda_1\nu^2\lambda_1}{\lambda_1\kappa(\nu\lambda_1)^2}
	=\frac{2\nu}{\lambda_1\kappa}\,.
\end{align}

We use \eqref{etabnd0} to improve the bound on $\VNorm{w}^2$. Instead of \eqref{wineq3} and \eqref{wineq4}, we now estimate as follows
\begin{align}\label{wineq3b}
	{\epsilon_1}|(g\eta \eb_2,w)|
	&\leqslant
	g\epsilon_1|\eta||w|
	\leqslant
	\frac{g^2}{\nu}|\eta|^2+\frac{\nu}{4}\epsilon_1^2|w|^2,
\end{align}
\begin{align}\label{wineq4b}
	|(g\eta \eb_2, A_0w)|
	\leqslant
	g|\eta||A_0w|
	\leqslant
	\frac{g^2}{\nu}|\eta|^2+\frac{\nu}{4}|A_0w|^2.
\end{align}
Combining \eqref{weqn1}, \eqref{weqn}, \eqref{wineq1}, \eqref{wineq2}, \eqref{wineq3b} and \eqref{wineq4b}, we have
\begin{align}\label{wineq6}
	&\ \ \ \ \frac12 \dt \big(
	\epsilon_1|w|^2
	+\Norm{w}^2\big)
	+{\epsilon_1}\nu\Norm{w}^2
	+\nu|A_0w|^2\\ \notag
	&\leqslant 
	\big(
	\frac12\mup-\frac34\mup
	\big)\VNorm{w}^2
	+\mup\VNorm{v}^2
	+\frac{\nu}{4}|A_0w|^2 
	\\ \notag
	& \ \ \ \ 
	+\frac{g^2}{\nu}|\eta|^2
	+\frac{\nu}{4}\epsilon_1^2|w|^2
	+\frac{g^2}{\nu}|\eta|^2
	+\frac{\nu}{4}|A_0w|^2,
\end{align}
which implies that
\begin{align*}
	\frac12\dt \VNorm{w}^2+
	\VNorm{w}^2\big(
	\frac14\mup
	-\frac{\epsilon_1\nu}{4}
	\big)
	+\frac{\nu}{2}|A_0w|^2
	\leqslant
	\frac{2g^2}{\nu}|\eta|^2
	+\mup \VNorm{v}^2.
\end{align*}
Therefore, 
by \eqref{mu2-sf},
\begin{align}\label{wineq7}
	\dt \VNorm{w}^2
	+\frac14\mup\VNorm{w}^2
	+{\nu}|A_0w|^2
	\leqslant 
	2\mu\bigg(
	\frac{2g^2\tilde C_0}{\nu}
	+\nu\lambda_1\nu^2\lambda_1
	\bigg)\Norm{v}_X^2.
\end{align}
Dropping the term ${\nu}|A_0w|^2$ in \eqref{wineq7} and using the Gronwall inequality, we conclude that
\begin{align}\label{wVbnd}
	\VNorm{w}^2
	\leqslant
	\tilde C_1\Norm{v}_{X}^2
\end{align}
where 
\begin{align}\label{c1}
	\tilde C_1 := \frac{2\mu\bigg(
		\frac{2g^2\tilde C_0}{\nu}
		+\nu\lambda_1\nu^2\lambda_1
		\bigg)
	}{\frac14\mup}
	= \frac{32 g^2}{\lambda_1\kappa\nu\lambda_1}
	+8\nu^2\lambda_1.
\end{align}
Note that the constant $\tilde C_1$ is independent of $\mu$.

By \eqref{weqn2} and \eqref{wineq5}, we have
\begin{align*}
	\frac12\dt |\eta|^2
	+\kappa\Norm{\eta}^2
	\leqslant
	\frac{\kappa}{4}\Norm{\eta}^2
	+\frac{\tilde K_1^2}{\kappa l^2}\VNorm{w}^2
\end{align*}
and thus by (\ref{wVbnd}) and the Poincar\'{e} inequality,
\begin{align*}
	\frac12\dt |\eta|^2+\frac{\kappa\lambda_1}{2}\Abs{\eta}^2\leqslant
	\tilde K_2\Norm{v}_X^2\,,\quad \tilde K_2 := 
	\frac{\tilde K_1^2\tilde C_1}{\kappa l^2}.
\end{align*}
Consequently, by the Gronwall inequality again, we have
\begin{align}\label{eta1bnd}
	|\eta|^2\leqslant \tilde C_2\Norm{v}_X^2\,,\quad \tilde C_2:=\frac{2\tilde K_2}{\lambda_1\kappa}\,,
\end{align}
where $\tilde C_2$ is also independent of $\mu$.

\subsubsection{Bound for $\intavee|A_0w(\tau)|^2\,d\tau$}
For any $t\in\kintv$, dropping the term $\frac14\mup\VNorm{w}^2$ in \eqref{wineq7} and integrating, then using the bound \eqref{wVbnd}, we have

\begin{align}\label{ave_bnd_sf}
	{\nu}
	\intavee|A_0w(\tau)|^2\,d\tau
	\leqslant 
	\tilde C_1\Norm{v}_{X}^2
	+
	2\mu T\bigg(
	\frac{2g^2\tilde C_0}{\nu}
	+\nu\lambda_1\nu^2\lambda_1
	\bigg)\Norm{v}_X^2.
\end{align}
Since $T_{**}<\infty$, it follows that
\begin{align}\label{ave_bnd_sf0}
	{\nu}
	\int_{-\kt}^{T_{**}}|A_0w(\tau)|^2\,d\tau
	<\infty.
\end{align}

\subsubsection{Bound for $\Norm{\eta}$}\label{eta-H1-sf}
%\subsubsection{Bound for $\intave\Norm{\eta(\tau)}^2\,d\tau$}

Proceeding as in the no-slip case (Section \ref{eta-H1-ns}) but using the bounds \eqref{wVbnd} and \eqref{eta1bnd} for $|w|$ and $|\eta|$ in \eqref{etaine} instead, we get for any $t\in[-\ktilde,-\ktilde+\alpha_k)$, $\ktilde=\kt$ and $\alpha_k=\frac{T_{**}+\ktilde}{2}$,
\begin{align}\label{etaH1ave-sf}
	\kappa\int_t^{t+\alpha_k}\Norm{\eta(\tau)}^2\ d\tau
	&\leqslant 
	\frac{\tilde C_2}{2}
	+\alpha_k
	\left(
	\frac{\kappa \lambda_1\tilde C_2}{4}
	+\frac{\tilde C_1}{\kappa l^2\lambda_1}
	\right)\NormX{v}^2=:\tilde\beta_k
\end{align}

%In particular, 
%\begin{align}\label{etaH1ave-sf0}
%	\kappa\int_{-\kt}^{T_{**}}\Norm{\eta(\tau)}^2\ d\tau
%	&\leqslant 
%	\frac{\tilde C_2}{2}+
%	(T_{**}+\kt)
%	\left(
%	\frac{\kappa \lambda_1\tilde C_2}{4}
%	+\frac{\tilde C_1}{\kappa l^2\lambda_1}
%	\right)
%	\NormX{v}^2<\infty.
%\end{align}
%\begin{align}\label{etaH1ave-sf}
%\frac{1}{T}\int_t^{t+T}\Norm{\eta(\tau)}^2\ d\tau
%&\leqslant 
%\frac{1}{\kappa}
%\left(
%\frac{\tilde C_2}{2T}+\frac{\kappa \lambda_1\tilde C_2}{4}
%+\frac{\tilde C_1}{\kappa l^2\lambda_1}
%\right)\NormX{v}^2=:\tilde C_3\NormX{v}^2.
%\end{align}

Similarly as in \eqref{eta-ineq}, we have
\begin{align}\label{eta1}
	\frac{1}{2}\frac{d}{dt}\Norm{\eta}^2
	&+\kappa|A_1\eta|^2
	+(B_1(w,\eta),A_1\eta)
	\\\notag
	&\leqslant
	\frac{\kappa}{4}|A_1\eta|^2+ \frac{1}{l^2\kappa}|w|^2
	\leqslant
	\frac{\kappa}{4}|A_1\eta|^2
	+\frac{|\Omega|}{l^2\kappa}\VNorm{w}^2\\ \notag
	&\leqslant
	\frac{\kappa}{4}|A_1\eta|^2
	+\frac{|\Omega|\tilde C_1}{l^2\kappa}\Norm{v}_X^2.
\end{align}
For the nonlinear term, we have
\begin{align}\label{eta2}
	|(B_1(w,\eta),A_1\eta)|
	&\leqslant 
	|A_1\eta|\cdot\Norm{w}_{L^4}\Norm{\nabla\eta}_{L^4}\quad
	\text{(H\"{o}lder)}\\ \notag
	&\leqslant
	c_L|A_1\eta|\cdot|w|^{1/2}\VNorm{w}^{1/2}\Norm{\eta}^{1/2}|A_1\eta|^{1/2}
	\quad\text{(Ladyzhenskaya)}\\ \notag
	&\leqslant
	c_L|\Omega|^{1/2}\VNorm{w}|A_1\eta|^{3/2}\Norm{\eta}^{1/2}
	\quad\text{(by (\ref{u-Poin}))}
	\\\notag
	&\leqslant
	c_L|\Omega|^{1/2}\tilde C_1^{1/2}\NormX{v}|A_1\eta|^{3/2}\Norm{\eta}^{1/2}
	\\\notag
	&\leqslant
	\frac{\kappa}{4}|A_1\eta|^2
	+{\tilde K_3}\NormX{v}^4\Norm{\eta}^2
	\quad \textrm{(Young)}\,\quad 	\tilde K_3:=\frac{27}{4\kappa^3}c_L^4|\Omega|^2\tilde C_1^2\,.
	%\textrm{(Young)}
\end{align}

By (\ref{eta1}) and (\ref{eta2}), we have 
\begin{gather}\label{A1etaineq1}
	\frac{d}{dt}\Norm{\eta}^2+\kappa\Abs{A_1\eta}^2
	\leqslant
	{2\tilde K_3}\NormX{v}^4
	\Norm{\eta}^2
	+\tilde K_4\Norm{v}_X^2,
	\quad \tilde K_4
	:=\frac{2|\Omega|\tilde C_1}{l^2\kappa}.
\end{gather}
Proceeding as in Section \ref{eta-H1-ns}, using Lemma \ref{Gronwall1} with 
\begin{gather*}
	t_0=-\kt,\quad
	t_1=T_{**}, \quad
	\alpha = \alpha_k,
	\\
	g(t)={2K_3\rho^4},\quad
	h(t)=K_4\rho^2, \quad
	y(t)=\Norm{\eta(t)}^2,
	\\
	a_1:={2\tilde K_3\alpha}\rho^4,\quad 
	a_2:=\tilde K_4\rho^2\alpha,\quad
	a_3:= \frac{\tilde\beta_k}{\kappa}\alpha\,,
\end{gather*}
we get
\begin{align}\label{c5}
	\sup_{t\in[-\ktilde+\alpha,T_{**})}\Norm{\eta(t)}^2
	\leqslant \tilde C_4\Norm{v}_X^2\,,
	\quad
	\tilde C_4:=(\tilde C_3+\tilde K_4T)e^{2\tilde K_3T\rho^4}\,,
\end{align}
and as in Section \eqref{ns_bnd}, 
\begin{align}\label{c50}
	\sup_{t\in[-\ktilde,T_{**})}\Norm{\eta(t)}^2
	<\infty
	\,.
\end{align}

\subsubsection{Bound for  $\intavee|A_1\eta(\tau)|^2\,d\tau$}
Similarly as in Section \ref{Aeta}, combining \eqref{A1etaineq1} and \eqref{c5},  we get for any $t\in[-\ktilde+\alpha_k,T_{**})$,
\begin{align}\label{Aeta_sf}
	\kappa
	\intavee\Abs{A_1\eta(\tau)}^2\,d\tau
	\leqslant
	\tilde C_4\Norm{v}_X^2
	+T\bigg({2\tilde K_3}\NormX{v}^4
	\tilde C_4\Norm{v}_X^2
	+\tilde K_4\Norm{v}_X^2
	\bigg).
\end{align}
Also, 
\begin{align}\label{Aeta_sf0}
	\kappa
	\int_{-\kt}^{T_{**}}\Abs{A_1\eta(\tau)}^2\,d\tau
	<\infty.
\end{align}

%%%%%%%%%%%%%%%%%%%%%%%%%%%%%%%%%%%%%%%%%%%%%%%%%%%%%%%%
%%%%%%%%%%%%%%%%%%%%%%%%%%%%%%%%%%%%%%%%%%%%%%%%%%%%%%%%
%%%%%%%%%%%%%%%%%%%%%%%%%%%%%%%%%%%%%%%%%%%%%%%%%%%%%%%%
% Proof of Prop 3.2 (Lip property)
%\input{CLipBnds.tex} 
\section{Lipchitz Property of the map $\tdW$}\label{varphibnd}
We assume in this section that $\NormX{v_i}\leqslant\rho$, $i=1,2$. Let $\varphi=w_1-w_2$, $\psi=\eta_1-\eta_2$ and $\gamma=v_1-v_2$ where $(w_i,\eta_i)=\widetilde W(v_i)$. We establish in this section the Lipchitz property of the map $\tdW$ for each set of boundary conditions.

By the auxiliary system \eqref{eq-aux0}, we have
\begin{align}\label{eq3-ch3-typeI}
	&\frac{d\varphi}{dt}+\nu A_0\varphi+B_0(w_2,\varphi)+B_0(\varphi,w_1)
	=\PL(g\psi \eb_2)-\mup (\Iht \varphi-\gamma)\,,\\
	\label{eq4-ch3-typeI}
	&\frac{d\psi}{dt}+\kappa A_1\psi +B_1(w_2,\psi)+B_1(\varphi,\eta_1)=\frac{\varphi \cdot \eb_2}{l}\,.
\end{align}

\subsection{No-slip BCs}
\subsubsection{Bound for $\Norm{\varphi}^2$ and $|\psi|^2$ by $\NormX{\gamma}^2$ }
Taking the $L^2$ inner product  of (\ref{eq3-ch3-typeI})--(\ref{eq4-ch3-typeI}) with $A_0\varphi$ and $\psi$ respectively, we have 
\begin{align}\label{eq1-ch6-typeI}
	&\ \ \ \frac12\frac{d}{dt}\|\varphi\|^2
	+\nu|A_0\varphi|^2
	+(B_0(w_2,\varphi),A_0\varphi)
	+(B_0(\varphi,w_1),A_0\varphi)\\
	&=
	(g\psi \eb_2,A_0\varphi)
	-\mup(\Iht \varphi-\gamma,A_0\varphi),\notag\\
	\label{eq2-ch6-typeI}
	&\ \ \  \frac{1}{2}\frac{d}{dt}\Abs{\psi}^2
	+\kappa\Norm{\psi}^2
	+(B_1(\varphi,\eta_1),\psi)
	=\frac{1}{l}(\varphi \eb_2,\psi).
\end{align}
Proceeding as for \eqref{t2leq2}, we find
\begin{align}\label{linear-eq1-ch3}
	-\mup(\Iht \varphi-\gamma,A_0\varphi)
	\leqslant
	\frac{\nu}{8}|A_0\varphi|^2
	+\frac12\mup \Norm{\gamma}^2
	-\frac12 \mup \Norm{\varphi}^2 \,.
\end{align}
By the Cauchy-Schwarz, Young and Poincar\'e inequalities, we have
\begin{align} \label{linear-eq2-ch3}
	&\big(g\psi \eb_2,A_0\varphi\big)
	\leqslant 
	g\Norm{\psi}\cdot\Norm{\varphi}
	\leqslant 
	\frac{\kappa(\nu\lambda_1)^2}{4}\Norm{\psi}^2
	+\frac{g^2}{\kappa(\nu\lambda_1)^2}\Norm{\varphi}^2\,,
\end{align}
\begin{align} \label{linear-eq3-ch3-typeI}
	\frac{1}{l}(\varphi \cdot \eb_2,\psi)
	\leqslant
	\frac{1}{l} |\varphi|\cdot|\psi|
	\leqslant 	
	\frac{1}{l\lambda_1}\Norm{\varphi}\cdot\Norm{\psi}
	\leqslant
	\frac{\kappa}{8}\Norm{\psi}^2
	+\frac{2}{l^2\lambda_1^2\kappa}\Norm{\varphi}^2\,.
\end{align}

For the two nonlinear terms involving $B_0$, we have (see \cite{Titi1987Criterion})
\begin{align}
	|(B_0(w_2,\varphi),A_0\varphi)|
	&\leqslant
	c_T\Norm{w_2}
	\cdot\Norm{\varphi}
	\left(
	\log\frac{e|A_0\varphi|}{\lambda_1^{1/2}\Norm{\varphi}}
	\right)^{1/2}
	|A_0\varphi|\\
	&\leqslant
	\frac{c_T^2}{\nu}\Norm{w_2}^2\Norm{\varphi}^2
	\log\frac{e|A_0\varphi|}{\lambda_1^{1/2}\Norm{\varphi}}
	+\frac{\nu}{4}|A_0\varphi|^2\notag
\end{align}
and by the Br\'ezis-Gallouet inequality (see \cite{Brezis1980Nonlinear,Titi1987Criterion})
\begin{align}
	|(B_0(\varphi,w_1),A_0\varphi)|
	&\leqslant
	c_B\Norm{w_1}
	\cdot\Norm{\varphi}
	\left(
	\log\frac{e|A_0\varphi|}{\lambda_1^{1/2}\Norm{\varphi}}
	\right)^{1/2}
	|A_0\varphi|\\
	&\leqslant 
	\frac{c_B^2}{\nu}\Norm{w_1}^2\Norm{\varphi}^2
	\log\frac{e|A_0\varphi|}{\lambda_1^{1/2}\Norm{\varphi}}
	+\frac{\nu}{4}|A_0\varphi|^2\notag
\end{align}

For the nonlinear term involving $B_1$, we have
%\footnote{(by Ladyzhenskaya, \cite[(3.9)]{farhat2015continuous})}
%\begin{align}\label{eta-ineq1}
%	(B_1(w_2,\psi),\psi)=0,
%\end{align}
\begin{align}\label{eta-ineq2}
	|(B_1(\varphi,\eta_1),\psi)|
	&\leqslant
	\Norm{\varphi}_{L^4}\Norm{\psi}_{L^4}\Norm{\eta_1}
	\leqslant
	c_L|\varphi|^{1/2}\Norm{\varphi}^{1/2}|\psi|^{1/2}\Norm{\psi}^{1/2}\Norm{\eta_1}\\
	&\leqslant
	\frac{c_L}{\sqrt{\lambda_1}}\Norm{\varphi}
	\Norm{\psi}
	\Norm{\eta_1}
	\leqslant
	\frac{\kappa}{8}\Norm{\psi}^2
	+\frac{2c_L^2}{\kappa\lambda_1}\Norm{\varphi}^2\Norm{\eta_1}^2\,.
	\notag
\end{align}

Combining the estimates above, we have for $\Norm{\varphi}$,
\begin{align}\label{dif-ineq1}
	\frac{1}{2}\frac{d}{dt}\Norm{\varphi}^2
	&+\Norm{\varphi}^2\bigg[
	\frac12\mup
	%	-\frac{1}{2L}c_0^2\mu h \nu\lambda_1
	-\frac{g^2}{\kappa(\nu\lambda_1)^2}\\
	&+\frac{\nu|A_0\varphi|^2}{4\Norm{\varphi}^2}
	-(c_T^2\Norm{w_2}^2+c_B^2\Norm{w_1}^2)
	\left(
	\log\frac{e|A_0\varphi|}{\lambda_1^{1/2}\Norm{\varphi}}
	\right)
	\nu^{-1}
	\bigg]\notag\\
	&+\nu|A_0\varphi|^2
	\left[1-\frac18
	-\frac{1}{2}-\frac14
	\right]
	-\frac{\kappa(\nu\lambda_1)^2}{4}\Norm{\psi}^2\notag\\
	&\leqslant 
	\frac{\mup}{2}\Norm{\gamma}^2.\notag
\end{align}
But the second line of \eqref{dif-ineq1} can be estimated by
\begin{align}
	\frac{\nu|A_0\varphi|^2}{4\Norm{\varphi}^2}
	&-(c_T^2\Norm{w_2}^2+c_B^2\Norm{w_1}^2)
	\left(
	\log\frac{e|A_0\varphi|}{\lambda_1^{1/2}\Norm{\varphi}}
	\right)
	\nu^{-1}\\
	&\geqslant
	\frac{\nu|A_0\varphi|^2}{4\Norm{\varphi}^2}
	-\frac{c_T^2\Norm{w_2}^2+c_B^2\Norm{w_1}^2}{\nu}
	\left(
	1+2\log\frac{|A_0\varphi|}{\lambda_1^{1/2}\|\varphi\|}
	\right)
	\notag\\
	&=
	\frac{\lambda_1\nu}{4}
	\bigg[
	\frac{|A_0\varphi|^2}{\lambda_1\Norm{\varphi}^2}
	-\frac{c_T^2\Norm{w_2}^2+c_B^2\Norm{w_1}^2}{\nu^2\lambda_1/4}
	\left(
	1+\log\frac{|A_0\varphi|^2}{\lambda_1\|\varphi\|^2}
	\right)
	\bigg]
	\notag\\
	&\geqslant
	\frac{\lambda_1\nu}{4}(-\epsilon\log\epsilon)\notag
\end{align}
where we used the elementary relation (see \cite[p.371]{foias2014unified})
\begin{align}
	\chi-\epsilon(1+\log \chi)\geqslant -\epsilon\log\epsilon,\quad \forall\, \chi \geqslant 1,
\end{align}
with
\begin{align}\label{ineq-alpha}
	\epsilon:=\frac{c_T^2\Norm{w_2}^2+c_B^2\Norm{w_1}^2}{\nu^2\lambda_1/4}
	\leqslant 
	\frac{4(c_T^2+c_B^2)\rho^2}{\nu^2\lambda_1}=:K_2\, .
\end{align}
%and we use Lemma \ref{lemalpha} in the last inequality.  
Hence, 
\begin{align}\label{dif-ineq2}
	\frac{1}{2}\frac{d}{dt}\Norm{\varphi}^2
	&+\Norm{\varphi}^2
	\bigg[
	\frac12\mup
	%	-\frac{1}{2L}c_0^2\mu h \nu\lambda_1
	-\frac{g^2}{\kappa(\nu\lambda_1)^2}
	-\frac{\lambda_1\nu}{4}(K_2\log K_2)
	\bigg]\\
	&+
	\frac{\nu}{8}|A_0\varphi|^2
	%	\left[
	%	1-\frac18-\frac{1}{2}-\frac14
	%	\right]
	-\frac{\kappa(\nu\lambda_1)^2}{4}\Norm{\psi}^2\notag\\
	&\leqslant 
	\frac{\mup}{2}\Norm{\gamma}^2.\notag
\end{align}

Combining \eqref{eq2-ch6-typeI}, \eqref{linear-eq3-ch3-typeI} and \eqref{eta-ineq2}, we have
% \eqref{eta-ineq1}
\begin{align}\label{dif-ineq3}
	\frac12\frac{d}{dt}|\psi|^2
	-\Norm{\varphi}^2
	\left[
	\frac{2c_L^2}{\kappa\lambda_1^2}\Norm{\eta_1}^2
	+\frac{2}{l^2\lambda_1^2\kappa}
	\right]
	+ \left[
	\kappa\Norm{\psi}^2
	-\frac{\kappa}{8}\Norm{\psi}^2
	-\frac{\kappa}{8}\Norm{\psi}^2
	\right]
	\leqslant 0\, .
\end{align}

Combining the differential inequalities \eqref{dif-ineq2} and \eqref{dif-ineq3} for $\Norm{\varphi}^2$ and $|\psi|^2$, we get
\begin{align*}
	\frac12\frac{d}{dt}
	\bigg(
	\Norm{\varphi}^2&+(\nu\lambda_1)^2|\psi|^2
	\bigg)
	+\frac{\kappa(\nu\lambda_1)^2}{2}\Norm{\psi}^2
	+\frac{\nu}{8}|A_0\varphi|^2
	\\
	&+\Norm{\varphi}^2
	\bigg[
	\frac12\mup
	-\frac{g^2}{\kappa(\nu\lambda_1)^2}
	-\frac{\lambda_1\nu}{4}(K_2\log K_2)
	-\frac{2c_L^2(\nu\lambda_1)^2}{\kappa\lambda_1^2}\rho^2
	-\frac{2(\nu\lambda_1)^2}{l^2\lambda_1^2\kappa}
	\bigg]\\
	&\leqslant 
	\frac12\mup
	\Norm{\gamma}_X^2\nu^2\lambda_1\, .
\end{align*}
Consequently, by \eqref{mu4} and the Poincar\'e inequality, 
%\rcom{Note: The first line below is not redundant: we need it later for deriving bounds on the average of $\Norm{\psi}$ and $|A_0w|^2$.}
\begin{align}\label{ineq_Aphi}
	\frac{d}{dt}&
	\bigg(
	\Norm{\varphi}^2+(\nu\lambda_1)^2|\psi|^2
	\bigg)
	+\kappa(\nu\lam_1)^2\Norm{\psi}^2
	+\frac{\nu}{4}|A_0\varphi|^2
	+{\kappa\lambda_1}\Norm{\varphi}^2
	\\ \notag
	&\leqslant
	\frac{d}{dt}
	\bigg(
	\Norm{\varphi}^2+(\nu\lambda_1)^2|\psi|^2
	\bigg)
	+
	{\kappa\lambda_1}
	\bigg(
	\Norm{\varphi}^2+(\nu\lambda_1)^2|\psi|^2
	\bigg)
	+\frac{\nu}{4}|A_0\varphi|^2
	\\ \notag
	&\leqslant
	\mup\Norm{\gamma}_X^2
	\nu^2\lambda_1\, .
\end{align}
Dropping the terms $\frac{\nu}{4}|A_0\varphi|^2$ in the second inequality, using the Gronwall inequality and the fact that $\Norm{w_j}$, $|\eta_j|$ are bounded, we obtain
\begin{align}\label{Lipbnd}
	\Norm{\varphi}^2+(\nu\lambda_1)^2|\psi|^2
	\leqslant 
	\frac{\mup}{\kappa}
	\NormX{\gamma}^2
	\nu^2\, .
\end{align}
%In particular, 
%\begin{align}\label{phiH1}
%	\Norm{\varphi}^2
%	\leqslant 
%	\frac{\mup}{\kappa}\NormX{\gamma}^2\nu^2\, .
%\end{align}
\subsubsection{Bound for $\intave  |A_0\varphi|^2$ and $\intave \Norm{\psi}^2$ by $\NormX{\gamma}^2$}
The inequality \eqref{ineq_Aphi} implies that
\begin{align*}
	\frac{d}{dt}
	\bigg(
	\Norm{\varphi}^2+(\nu\lambda_1)^2|\psi|^2
	\bigg)
	+\frac{\nu}{4}|A_0\varphi|^2
	+\kappa(\nu\lam_1)^2\Norm{\psi}^2
	\leqslant
	\mup\Norm{\gamma}_X^2
	\nu^2\lambda_1\, .
\end{align*}
Integrating from $t$ to $t+T$, $T=\tunit$, and using the bound \eqref{Lipbnd}, we have
\begin{align}\label{avebnds}
	\frac{\nu}{4}\intave  |A_0\varphi(\tau)|^2\,d\tau
	+\kappa(\nu\lam_1)\intave \Norm{\psi(\tau)}^2\,d\tau
	\leqslant
	\mup\left(\lambda_1T
	+	\frac{1}{\kappa}\right)
	\NormX{\gamma}^2
	\nu^2\,.
\end{align}

\subsubsection{Bounds for $\Norm{\psi}^2$ and $\intave |A_1\psi|^2$ by $\NormX{\gamma}^2$ }
Taking the $L^2$ inner product of \eqref{eq4-ch3-typeI} with $A_1\psi$, we have
\begin{align}\label{psiH1}
	\frac12\dt   \Norm{\psi}^2 
	+\kappa\Abs{A_1\psi}^2
	+b_1(w_2,\psi,A_1\psi)
	+b_1(\varphi,\eta_1,A_1\psi) 
	=\frac1l(\varphi\cdot \eb_2,A_1\psi),.
\end{align}
Integrating by parts, we have
\begin{align}\label{psiH1a}
	|b_1(w_2,\psi,A_1\psi)|
	&\leqslant
	\Norm{w_2}\cdot \Norm{\nabla\psi}_{L^4}^2
	\quad \text{(H\"{o}lder)}
	\\ \notag
	&\leqslant
	c_L \Norm{w_2}\cdot\Norm{\psi}\cdot|A_1\psi|
	\quad \text{(Ladyzhenskaya)}
	\\ \notag
	&\leqslant
	\frac{\kappa}{8}|A_1\psi|^2
	+\frac{2c_L^2}{\kappa}\Norm{w_2}^2\Norm{\psi}^2\,.
\end{align}
Similarly, 
\begin{align}\label{psiH1b}
	|b_1(\varphi,\eta_1,A_1\psi)|
	&\leqslant
	\Norm{\varphi}\cdot \Norm{\nabla\eta_1}_{L^4}\Norm{\nabla\psi}_{L^4}
	\quad \text{(H\"{o}lder)}
	\\ \notag
	&\leqslant 
	c_L \Norm{\varphi}\cdot\Norm{\eta_1}^{1/2}|A_1\eta_1|^{1/2} \Norm{\psi}^{1/2}|A_1\psi|^{1/2}
	\quad \text{(Ladyzhenskaya)}
	\\ \notag
	&\leqslant
	\frac{1}{2\nu} \Norm{\varphi}^2
	+\frac{c_L^2\nu}{2}\Norm{\eta_1}\cdot |A_1\eta_1|\cdot  \Norm{\psi}\cdot |A_1\psi|
	\quad \text{(Young)}
	\\ \notag
	&\leqslant
	\frac{1}{2\nu} \Norm{\varphi}^2
	+\frac{\kappa}{8}|A_1\psi|^2
	+\frac{c_L^4\nu^2}{2\kappa}\Norm{\eta_1}^2  |A_1\eta_1|^2\Norm{\psi}^2\,.
\end{align}

By Cauchy-Schwarz and Young inequalities,
\begin{align}\label{psiH1c}
	\frac1l|(\varphi\cdot \eb_2,A_1\psi)|
	\leqslant
	\frac{\kappa}{4}|A_1\psi|^2
	+\frac{1}{\kappa l^2}|\varphi|^2\,.
\end{align}

Combining \eqref{psiH1}--\eqref{psiH1c}, we obtain
\begin{align}\label{A1psi}
	\frac12\dt   \Norm{\psi}^2 
	+\frac{\kappa}{2}\Abs{A_1\psi}^2
	\leqslant	
	\left(
	\frac{2c_L^2}{\kappa}\Norm{w_2}^2
	+
	\frac{c_L^4\nu^2}{2\kappa}\Norm{\eta_1}^2  |A_1\eta_1|^2
	\right)
	\Norm{\psi}^2
	+
	\frac{1}{2\nu} \Norm{\varphi}^2
	+\frac{1}{\kappa l^2}|\varphi|^2\,.
\end{align}
Let the function $g$ and $h$ in Lemma \ref{Gronwall1} be
\begin{align}\label{gandh}
	g:=2\left(
	\frac{2c_L^2}{\kappa}\Norm{w_2}^2
	+
	\frac{c_L^4\nu^2}{2\kappa}\Norm{\eta_1}^2  |A_1\eta_1|^2
	\right),\quad
	h:= \frac{1}{\nu} \Norm{\varphi}^2
	+\frac{2}{\kappa l^2}|\varphi|^2\,.
\end{align}
By the bounds \eqref{wH1nst2}, \eqref{etaH1bnd10} and \eqref{ave_A1eta}, we have
\begin{align}\label{gbnd}
	\intave g(s)\,ds
	\leqslant
	\frac{4c_L^2}{\kappa} \cdot 4C_1\rho^2\lam_1T
	+
	\frac{c_L^4\nu^2}{\kappa^2}
	C_3
	\left[{C_3}+\left(
	\frac{8c_L^2 C_1\lambda_1C_3}{\kappa}
	+\frac{8C_1}{l^2\kappa}
	\right)
	\rho^2T
	\right]=:a_1\,.
\end{align}
By \eqref{Lipbnd} and the Poincar\'e inequality, we have
\begin{align}\label{hbnd}
	\intave h(s)\,ds\leqslant
	T\left(
	\frac{1}{2\nu}+\frac{1}{\kappa l^2\lam_1}
	\right)
	\frac{\mup}{\kappa}\NormX{\gamma}^2\nu^2=:K_{11}\NormX{\gamma}^2=:a_2
	\,.
\end{align}
By \eqref{avebnds}, 
\begin{align}\label{avepsi}
	\intave \Norm{\psi(\tau)}^2\,d\tau
	\leqslant
	\frac{\mup}{\kappa(\nu\lam_1)}
	\left(\lambda_1T
	+	\frac{1}{\kappa}\right)
	\NormX{\gamma}^2
	\nu^2=:K_{12}\NormX{\gamma}^2=:a_3\,.
\end{align}

Dropping the term $\frac{\kappa}{2}\Abs{A_1\psi}^2$ in \eqref{A1psi}, applying Lemma \ref{Gronwall1} with \eqref{gbnd}, \eqref{hbnd} and \eqref{avepsi} we have
\begin{align*}
	\sup_{t\in\R}\Norm{\psi(t)}^2
	\leqslant
	e^{a_1}
	\left(
	K_{11}
	+
	\frac{K_{12}}{T}
	\right)\NormX{\gamma}^2=:K_{13}\NormX{\gamma}^2\,.
\end{align*}
Now, by integrating \eqref{A1psi} from $t$ to $t+T$ and using \eqref{gbnd} and \eqref{hbnd}, we get
\begin{align*}
	\kappa\intave |A_1\psi(\tau)|^2\,d\tau
	\leqslant 
	(K_{13}+a_1K_{13}+K_{11})\NormX{\gamma}^2\,.
\end{align*}

\subsection{Stress-free BCs}\label{phi-bnd-sf-t1}

\subsubsection{Bounds for $|\psi|^2$, $|\varphi|^2$ and $\Norm{\varphi}^2$ by $\NormX{\gamma}^2$}
Taking the $L^2$ inner product  of \eqref{eq3-ch3-typeI}--\eqref{eq4-ch3-typeI} with $\varphi$ and $\psi$ respectively and taking the $L^2$ inner product of \eqref{eq3-ch3-typeI} with $A_0\varphi$ we have
\begin{align}\label{phi-eq1}
	\epsilon_1\left(
	\frac12\dt|\varphi|^2
	+\nu\Norm{\varphi}^2
	+b_0(\varphi,w_1,\varphi)
	\right)
	&=\epsilon_1\left(
	g(\psi \eb_2,\varphi)
	-\mup(\Iht \varphi-\gamma,\varphi)
	\right)\\
	\label{phi-eq2}
	\frac12\dt\Norm{\varphi}^2
	+\nu|A_0\varphi|^2
	+b_0(w_2,\varphi,A_0\varphi)
	&+b_0(\varphi,w_1,A_0\varphi)\\\notag
	&=g(\psi \eb_2,A_0\varphi)
	-\mup(\Iht \varphi-\gamma,A_0\varphi)\\
	\label{psi-eq1}
	\epsilon_2
	\left(
	\frac12\dt|\psi|^2+\kappa\Norm{\psi}^2+b_1(\varphi,\eta_1,\psi)
	\right)
	&=\epsilon_2\left(
	\frac1l(\varphi\cdot \eb_2,\psi)
	\right),
\end{align}
where, as in \eqref{weqn1}, \eqref{weqn2}, $\epsilon_1=|\Omega|^{-1}$, $\epsilon_2=(\nu\lam_1)^2$.

For the linear terms, as in \eqref{wineq1}--\eqref{wineq5} we have
\begin{align}\label{phi_ineq1}
	-{\mup}{\epsilon_1}(\Iht \varphi-\gamma, \varphi) 
	%	\leqslant 
	%	{\mup}{\epsilon_1}
	%	\bigg[|(\Iht \varphi-\varphi,\varphi)|
	%	+|(\gamma,\varphi)|
	%	-(\varphi,\varphi)\bigg]\\ \notag
	%	&\leqslant
	%	\frac{\mup}{|\Omega|}
	%	\left[c_0h\VNorm{\varphi}\cdot|\varphi|
	%	+\Abs{\gamma}^2
	%	+\frac{1}{4}\Abs{\varphi}^2
	%	-\Abs{\varphi}^2\right]\\ \notag
	%	&\leqslant 
	%	\frac{\mup}{|\Omega|}
	%	\left[c_0h|\Omega|^{1/2}\VNorm{\varphi}^2
	%	+\Abs{\gamma}^2
	%	+\frac{1}{4}\Abs{\varphi}^2
	%	-\Abs{\varphi}^2\right]\\ \notag
	%	&
	\leqslant
	\frac14\mup\VNorm{\varphi}^2
	+\mup{\epsilon_1}\Abs{\gamma}^2
	-\frac34{\mup}{\epsilon_1}|\varphi|^2
	+\frac\nu8|A_0\varphi|^2\,,
\end{align}

\begin{align}\label{phi_ineq1a}
	-\mup(\Iht \varphi-\gamma, A_0\varphi) 
	%	\leqslant 
	%	\mup
	%	\bigg[
	%	|(\Iht \varphi-\varphi,A_0\varphi)|
	%	+|(\gamma,A_0\varphi)|
	%	-(\varphi,A_0\varphi)
	%	\bigg]\\ \notag
	%	&\leqslant
	%	\mup
	%	\left[c_0h\VNorm{\varphi}\cdot|A_0\varphi|
	%	+\Norm{\gamma}^2
	%	+\frac{1}{4}\Norm{\varphi}^2
	%	-\Norm{\varphi}^2\right]\\ \notag
	%	&\leqslant 
	%	\mup
	%	\left[\frac14\VNorm{\varphi}^2
	%	+c_0^2h^2|A_0\varphi|^2
	%	+\Norm{\gamma}^2
	%	+\frac{1}{4}\Norm{\varphi}^2
	%	-\Norm{\varphi}^2\right]\\ \notag
	%	&
	\leqslant
	\frac14\mup\VNorm{\varphi}^2
	+\frac{\nu}{8}|A_0\varphi|^2
	+\mup\Norm{\gamma}^2
	-\frac34\mup\Norm{\varphi}^2\,,
\end{align}
\begin{align}\label{phi_ineq2}
	{\epsilon_1}|(g\psi \eb_2,\varphi)|
	%	&\leqslant
	%	\frac{g}{|\Omega|}|\psi|\cdot|\varphi|
	%	\leqslant
	%	\frac{g\lambda_1^{-1/2}}{|\Omega|}\Norm{\psi}\cdot|\varphi|\\\notag
	%	&\leqslant 
	%	\frac{\kappa\epsilon_2}{8}\Norm{\psi}^2
	%	+\frac{2}{\kappa\epsilon_2}
	%	\frac{g^2\lambda_1^{-1}}{|\Omega|^2}|\varphi|^2
	\leqslant
	\frac{\kappa\epsilon_2}{8}\Norm{\psi}^2
	+\frac{2}{\kappa\epsilon_2}
	\frac{g^2\lambda_1^{-1}}{|\Omega|}\VNorm{\varphi}^2,
\end{align}
\begin{align}\label{phi_ineq3}
	|(g\psi \eb_2, A_0\varphi)|
	%	\leqslant
	%	g\Norm{\psi \eb_2}\cdot\Norm{\varphi}
	%	\leqslant
	%	\frac{\kappa \epsilon_2}{8}\Norm{\psi}^2
	%	+\frac{2g^2}{\kappa \epsilon_2}\Norm{\varphi}^2
	\leqslant
	\frac{\kappa \epsilon_2}{8}\Norm{\psi}^2
	+\frac{2g^2}{\kappa \epsilon_2}\VNorm{\varphi}^2,
\end{align}
\begin{align}\label{phi_ineq4}
	\frac{\epsilon_2}{l}|(\varphi\cdot \eb_2,\psi)|
	%	\leqslant
	%	\frac{\epsilon_2}{l} |\varphi\cdot \eb_2|\cdot |\psi|
	%	\leqslant
	%	\frac{\tilde K_1\epsilon_2}{l}\VNorm{\varphi}\Norm{\psi} 
	\leqslant
	\frac{\kappa\epsilon_2}{4}\Norm{\psi}^2
	+\frac{\tilde K_1^2\epsilon_2}{\kappa l^2}\VNorm{\varphi}^2\,.
\end{align}
%where $\tilde K_1$ is as in \eqref{K1}.

For the nonlinear terms, we have
\begin{align}\label{phi-NL-ineq1}
	\epsilon_1|b_0(\varphi,w_1,\varphi)|
	&\leqslant 
	\epsilon_1\Norm{w_1}\cdot\Norm{\varphi}_{L^4}^2
	\quad\text{(H\"{o}lder)}\\ \notag
	&\leqslant 
	\epsilon_1c_L\Norm{w_1}\cdot
	|\varphi|\cdot\VNorm{\varphi}
	\quad\text{(Ladyzhenskaya)}\\ \notag
	&\leqslant
	\epsilon_1c_L\tilde C_1^{1/2}\rho|\varphi|\cdot\VNorm{\varphi}
	\quad \textrm{(by (\ref{wVbnd}))}
	\\ \notag
	&\leqslant 
	\epsilon_1c_L\tilde C_1^{1/2}\rho|\Omega|^{1/2}\VNorm{\varphi}^2
	\quad \textrm{(by \eqref{u-Poin})}
\end{align}
\begin{align}\label{psi-NL-ineq1}
	\epsilon_2|b_1(\varphi,\eta_1,\psi)|
	&\leqslant 
	\epsilon_2\Norm{\varphi}_{L^4}
	\Norm{\eta_1}
	\Norm{\psi}_{L^4}
	\quad\textrm{(H\"{o}lder)}
	\\ \notag
	&\leqslant
	\epsilon_2c_L
	|\varphi|^{1/2}\VNorm{\varphi}^{1/2}\Norm{\eta_1}|\psi|^{1/2}\Norm{\psi}^{1/2}
	\quad \text{(Ladyzhenskaya)}
	\\ \notag
	&\leqslant
	\epsilon_2c_L|\Omega|^{1/4}
	\VNorm{\varphi}
	\Norm{\eta_1}
	\lambda_1^{-1/4}
	\Norm{\psi}
	\quad \textrm{(by \eqref{u-Poin})}
	\\ \notag
	&\leqslant
	\epsilon_2c_L|\Omega|^{1/4}\lambda_1^{-1/4}\tilde C_4^{1/2}\rho\VNorm{\varphi}\Norm{\psi}
	\quad\text{(by (\ref{c5}))}
	\\ \notag
	&\leqslant 
	\frac{\epsilon_2\kappa}{4}\Norm{\psi}^2
	+\frac{\epsilon_2K_{13}}{\kappa}\VNorm{\varphi}^2
	\quad \text{(Young)}\quad K_{13}:=c_L^2|\Omega|^{1/2}\lambda_1^{-1/2}\tilde C_4\rho^2
\end{align}
\begin{align}\label{phi2-NL-ineq1}
	|b_0(\varphi,&w_1,A_0\varphi)|
	\leqslant 
	\Norm{\varphi}_{L^\infty}\Norm{w_1}|A_0\varphi|
	\quad\text{(H\"{o}lder)}
	\\ \notag
	&\leqslant 
	c_A|\varphi|^{1/2}\Norm{\varphi}_{H^2}^{1/2}
	\Norm{w_1}|A_0\varphi|
	\quad \text{(2D Agmon)}
	\\ \notag
	&\leqslant c_Ac_E
	|\varphi|^{1/2}\bigg(
	\frac{1}{|\Omega|^{1/2}}|\varphi|^{1/2}
	+|A_0\varphi|^{1/2}
	\bigg)
	\Norm{w_1}|A_0\varphi|
	\quad \text{(by (\ref{elliptic-reg}))} 
	\\ \notag
	&\leqslant
	\frac{c_Ac_E}{|\Omega|^{1/2}} 
	|\varphi| \cdot|A_0\varphi|\tilde C_1^{1/2}\rho
	+c_Ac_E|\varphi|^{1/2}|A_0\varphi|^{3/2}\tilde C_1^{1/2}\rho
	\quad \text{ (by (\ref{wVbnd}))}\\ \notag
	&\leqslant 
	\frac{\nu}{4}|A_0\varphi|^2+\frac{K_{14}}{\nu}|\varphi|^2
	\quad \text{(Young)}
\end{align}
where
\begin{align*}
	K_{14}=2\hat{K_1}^2+\frac{54}{\nu^3}\hat{K_2}^4,\quad
	\hat{K_1}=\frac{c_Ac_E\tilde C_1^{1/2}\rho}{|\Omega|^{1/2}},
	\quad
	\hat{K_2}=c_Ac_E\tilde C_1^{1/2}\rho;
\end{align*}

\begin{align}\label{phi2-NL-ineq2}
	|b_0(w_2,&\varphi,A_0\varphi)|
	\leqslant 
	\Norm{w_2}_{L^4}\Norm{\nabla\varphi}_{L^4}\Abs{A_0\varphi}
	\quad\text{(H\"{o}lder)}\\ \notag
	&\leqslant 
	c_L|w_2|^{1/2}\Norm{w_2}_{H^1}^{1/2}
	|\nabla\varphi|^{1/2}\Norm{\nabla\varphi}_{H^1}^{1/2}|A_0\varphi|
	\quad \text{(Ladyzhenskaya)}\\ \notag
	&\leqslant 
	c_L|\Omega|^{1/4}\Norm{w_2}_{V_0}
	\|\varphi\|^{1/2}\Norm{\varphi}_{H^2}^{1/2}|A_0\varphi|\\ \notag
	&\leqslant 
	c_Lc_E|\Omega|^{1/4}\tilde C_1^{1/2}\rho\Norm{\varphi}^{1/2}
	\bigg(
	\frac{1}{|\Omega|^{1/2}}|\varphi|^{1/2}+|A_0\varphi|^{1/2}
	\bigg)
	|A_0\varphi|
	\quad \text{(by (\ref{elliptic-reg}))} 
	\\ \notag
	&\leqslant 
	c_Lc_E|\Omega|^{1/4}\tilde C_1^{1/2}\rho\Norm{\varphi}^{1/2}
	\bigg(
	\frac{|\Omega|^{1/4}}{|\Omega|^{1/2}}\VNorm{\varphi}^{1/2}+|A_0\varphi|^{1/2}
	\bigg)
	|A_0\varphi|
	\\ \notag
	&\leqslant 
	\frac{\nu}{4}|A_0\varphi|^2+\frac{K_{15}}{\nu}\VNorm{\varphi}^2
	\quad \text{(Young)}
\end{align}
where
\begin{align*}
	K_{15}=2\hat{K_3}^2+\frac{54}{\nu^3}\hat{K_4}^4,\quad
	\hat{K_3}=c_Ec_L\tilde C_1^{1/2}\rho,\quad
	\hat{K_4}=c_Ec_L|\Omega|^{1/4}\tilde C_1^{1/2}\rho.
\end{align*}

Combining \eqref{phi-eq1}--\eqref{phi2-NL-ineq2}, we have
\begin{align*}
	&\ \ \ \ \frac12\dt(\epsilon_1|\varphi|^2+\Norm{\varphi}^2+\epsilon_2|\psi|^2)
	+\epsilon_1\nu\Norm{\varphi}^2
	+\nu|A_0\varphi|^2
	+\epsilon_2\kappa\Norm{\psi}^2\\ \notag
	&\leqslant
	\VNorm{\varphi}^2\bigg[
	\frac12\mup-\frac34\mup
	+\frac{2g^2\lambda_1^{-1}}{\kappa\epsilon_2|\Omega|}
	+\frac{2g^2}{\kappa\epsilon_2}
	+\frac{\tilde K_1^2\epsilon_2}{\kappa l^2}\\ \notag
	&\qquad \qquad\qquad\qquad\qquad\qquad
	+\epsilon_1c_L\tilde C_1^{1/2}\rho|\Omega|^{1/2}
	+\frac{\epsilon_2K_{13}}{\kappa}
	+\frac{K_{15}}{\nu}
	+\frac{K_{14}|\Omega|}{\nu}
	\bigg]\\ \notag
	&\quad 
	+\mup\VNorm{\gamma}^2
	+\frac34\nu|A_0\varphi|^2
	+\frac34\kappa\epsilon_2\Norm{\psi}^2.
\end{align*}
It follows that
\begin{align*}
	\frac12\dt(\VNorm{\varphi}^2+\epsilon_2|\psi|^2)
	+\VNorm{\varphi}^2\bigg(
	\frac14\mup-K_{16}
	\bigg)
	+\frac14\epsilon_2\kappa\Norm{\psi}^2
	+\frac{\nu}{4}|A_0\varphi|^2
	\leqslant
	\mup\VNorm{\gamma}^2
\end{align*}
where
\begin{align}\label{k-16}
	K_{16}:=\frac{2g^2\lambda_1^{-1}}{\kappa\epsilon_2|\Omega|}
	+\frac{2g^2}{\kappa\epsilon_2}
	+\frac{\tilde K_1^2\epsilon_2}{\kappa l^2}
	+\epsilon_1c_L\tilde C_1^{1/2}\rho|\Omega|^{1/2}
	+\frac{\epsilon_2K_{13}}{\kappa}
	+\frac{K_{15}}{\nu}
	+\frac{K_{14}|\Omega|}{\nu}.
\end{align}
By \eqref{mu3-sf}, we have
\begin{align}\label{ineq_Aphi_sf}
	\dt(\VNorm{\varphi}^2+\epsilon_2|\psi|^2)
	+(\VNorm{\varphi}^2+\epsilon_2|\psi|^2)\frac{\kappa\lambda_1}{4}
	+\frac14\epsilon_2\kappa\Norm{\psi}^2
	+\frac{\nu}{2}|A_0\varphi|^2
	\leqslant
	2\mup\Norm{\gamma}_X^2\nu^2\lambda_1.
\end{align}
Dropping $\frac14\epsilon_2\kappa\Norm{\psi}^2+\frac{\nu}{2}|A_0\varphi|^2$ on the left and using the Gronwall inequality, we conclude that
\begin{align}\label{phi-bnd}
	\VNorm{\varphi}^2
	+\epsilon_2|\psi|^2
	\leqslant 
	\mu C_6\Norm{\gamma}_X^2,\quad 
	C_6:=\frac{8\lambda_1\nu^3}{\kappa},
\end{align}
and in particular, 
\begin{align}
	\VNorm{\varphi}^2
	\leqslant \mu C_6\Norm{\gamma}_X^2.
\end{align}

\subsubsection{Bound for $\intave \Norm{\psi}^2$ and $\intave  |A_0\varphi|^2$ by $\NormX{\gamma}^2$}
Using the inequality \eqref{ineq_Aphi_sf} and proceeding as in the no-slip case, we get 
\begin{align}\label{ave_bnds_sf}
	\epsilon_2\kappa\intave \Norm{\psi(\tau)}^2\,d\tau
	+{\nu}\intave  |A_0\varphi(\tau)|^2\,d\tau
	\leqslant
	(8\mup\nu^2\lambda_1T
	+4\mu C_6)
	\Norm{\gamma}_X^2\,.
\end{align}

\subsubsection{Bound for $\Norm{\psi}^2$ and $\intave  |A_1\psi|^2$ by $\NormX{\gamma}^2$}
Proceeding as in the no-slip case, we get \eqref{A1psi}:
\begin{align*}
	\frac12\dt   \Norm{\psi}^2 
	+\frac{\kappa}{2}\Abs{A_1\psi}^2
	\leqslant	
	\left(
	\frac{2c_L^2}{\kappa}\Norm{w_2}^2
	+
	\frac{c_L^4\nu^2}{2\kappa}\Norm{\eta_1}^2  |A_1\eta_1|^2
	\right)
	\Norm{\psi}^2
	+
	\frac{1}{2\nu} \Norm{\varphi}^2
	+\frac{1}{\kappa l^2}|\varphi|^2\,.
\end{align*}
Using the bounds \eqref{wVbnd}, \eqref{c5} and \eqref{Aeta_sf}, we have

\begin{align}\label{gbnd_sf}
	\intave g(s)\,ds
	\leqslant
	\frac{4c_L^2}{\kappa} \cdot \tilde{C}_1\rho^2T
	+
	\frac{c_L^4\nu^2}{\kappa^2}
	\tilde{C}_4\rho^2
	\left[
	\tilde C_4\rho^2
	+T\bigg({2\tilde K_3}\rho^4
	\tilde C_4\rho^2
	+\tilde K_4\rho^2
	\bigg)
	\right]=:a_1\,.
\end{align}
By \eqref{phi-bnd} and \eqref{u-Poin}, we have
\begin{align}\label{hbnd_sf}
	\intave h(s)\,ds\leqslant
	T\left(
	\frac{1}{\nu}+\frac{2|\Omega|}{\kappa l^2}
	\right)
	\mu{C}_6\NormX{\gamma}^2=:\tilde{K}_{11}\NormX{\gamma}^2
	\,.
\end{align}

Applying Lemma \ref{Gronwall1} with \eqref{gbnd_sf}, \eqref{hbnd_sf} and \eqref{ave_bnds_sf} yields
\begin{align*}
	\sup_{t\in\R}\Norm{\psi(t)}^2
	\leqslant
	e^{a_1}\left[
	\tilde{K}_{11}
	+
	\frac{1}{\kappa\epsilon_2T}(8\mup\nu^2\lambda_1T
	+4\mu C_6)
	\right]
	\NormX{\gamma}^2=:\tilde{K}_{12}\NormX{\gamma}^2\,.
\end{align*}

By integrating \eqref{A1psi} from $t$ to $t+T$ and using \eqref{gbnd_sf} and \eqref{hbnd_sf}, we get
\begin{align*}
	\kappa\intave |A_1\psi(\tau)|^2\,d\tau
	\leqslant 
	(\tilde{K}_{12}+a_1\tilde{K}_{12}+\tilde{K}_{11})\NormX{\gamma}^2\,.
\end{align*}

%%%%%%%%%%%%%%%%%%%%%%%%%%%%%%%%%%%%%%%%%%%%%%%%%%%%%%%%
%%%%%%%%%%%%%%%%%%%%%%%%%%%%%%%%%%%%%%%%%%%%%%%%%%%%%%%%
%%%%%%%%%%%%%%%%%%%%%%%%%%%%%%%%%%%%%%%%%%%%%%%%%%%%%%%%
%% Proof of Prop 3.3
%\input{CDetFormAttrConnection.tex}
\section{Proof of Proposition \ref{prop-detForm-RB}}\label{pf2}
Let $\delta = w-u$ and $\xi = \eta-\theta$. Taking the difference of the RB system (\ref{eq-benardfn0}) and the auxiliary equations (\ref{detform-RB}), we have
\begin{subequations}
	\begin{align*}
		\frac{d\delta}{dt}+\nu A_0\delta+B_0(w,w)-B_0(u,u)
		&=\PL(g\xi \eb_2)-\mup (\Iht \delta),\\
		\frac{d\xi}{dt}+\kappa A_1\xi+B_1(w,\eta)-B_1(u,\theta)
		&=\frac{\delta\cdot \eb_2}{l}.
	\end{align*}
\end{subequations}
Applying the (essentially) same calculation in Section \ref{varphibnd},
%	     \eqref{Lipbnd} (\eqref{t2Lipbnd} for the Type II interpolants), 
we conclude that
\begin{align*}
	\Norm{\delta(t)}^2=|\xi(t)|^2=0,\quad\forall \, t\in\R,
\end{align*} 
which completes the proof.

%%%%%%%%%%%%%%%%%%%%%%%%%%%%%%%%%%%%%%%%%%%%%%%%%%%%%%%%
%%%%%%%%%%%%%%%%%%%%%%%%%%%%%%%%%%%%%%%%%%%%%%%%%%%%%%%%
%%%%%%%%%%%%%%%%%%%%%%%%%%%%%%%%%%%%%%%%%%%%%%%%%%%%%%%%
%\input{CAppend.tex}
\section{Appendix}
Let $\Tc(t;x)=\eta(t;x)+(1-\frac{x_2}{l})$ where $x=(x_1,x_2)\in \Omega$. Observe that for a given smooth enough $w$ with $\nabla\cdot w=0$,  $\Tc$ satisfies, on $[-\kt,T_{**})$,
\begin{align}\label{Tc}
	\frac{\partial \Tc}{\partial t} - \kappa \Delta\Tc +(w\cdot \nabla)\Tc=0,\\ 
	\Tc(-\kt;x_1,x_2)=1-\frac{x_2}{l}.
\end{align}
with boundary conditions
\begin{align*}
	\textrm{in the $x_2$-variable: }
	%	&\Tc(t;x_1,0)=1$, $\Tc(t;x_1,l)=0
	&\Tc=0\ \text{at $x_2=0$ and $x_2=l$,}
	\\
	\textrm{in the $x_1$-variable: }
	&\Tc\ \textrm{ is of periodic }L.
\end{align*}
% $\Tc(t;x_1,0)=1$, $\Tc(t;x_1,l)=0$ 

Observe  that
$
0\leqslant \Tc(-\kt;x)\leqslant 1,
$
and thus
\begin{align*}
	\Tc_-(\kt;x)=0,\quad (\Tc-1)_+(\kt;x)=0,
\end{align*}
where we denote for any real number $M$,
$M_+=\max(M,0)$ and  $M_-=\max(-M,0).$

Note that $\Tct:=\Tc_-$ satisfies \eqref{Tc} a.e. and also the boundary conditions. 
%$\Tc_-=0$ on $x_2=0,l$ and 
%\begin{align*}
%	\nabla \Tc_-=\begin{cases}
%	-\nabla \Tc, & \Tc\leqslant 0;\\
%				   0,& \Tc >0.
%	\end{cases}
%\end{align*}
The chain rule and integration by parts yield 
\begin{align*}
	\int_\Omega ((w\cdot \nabla)\Tct)\Tct\,dx
	=\sum_{i,j}\int_\Omega w_i(\partial_i\Tct_j)\Tct_j\,dx
	=\sum_{i,j}\int_\Omega w_i \partial_i\frac{(\Tct_j)^2}{2}\,dx
	=-\sum_{j}\int_\Omega (\nabla\cdot w)\frac{(\Tct_j)^2}{2}\, dx
	=0,
\end{align*}
where the boundary term vanishes due to the boundary conditions. 
Hence, multiplying \eqref{Tc} by $\Tc_-$ and integrating over $\Omega$, we obtain
\begin{align*}
	\frac12\dt |\Tc_-(t)|^2+\kappa|\nabla \Tc_-(t)|^2=0,
\end{align*}
which implies that 
\begin{align*}
	|\Tc_-(t)|^2\leqslant|\Tc_-(-\kt)|^2=0 \quad \textrm{for } t\in[-\kt,T_{**}). 
\end{align*}
It follows that $\Tc_-(t)=0$ and thus $\Tc(t)\geqslant 0$.

We now show that $\Tc\leqslant 1$. Observe that
\begin{align*}
	\frac{\partial}{\partial t}(\Tc-1)-\kappa\Delta(\Tc-1)+(w\cdot\nabla)(\Tc-1)=0.
\end{align*}
Proceeding similarly as above, we obtain, 
\begin{align*}
	\frac12\dt|(\Tc-1)_+|^2+\kappa|\nabla(\Tc-1)_+|^2=0,
\end{align*}
which implies that 
\begin{align*}
	|(\Tc-1)_+(t)|^2\leqslant|(\Tc-1)_+(-\kt)|^2=0 \quad \textrm{for } t\in[-\kt,T_{**}),
\end{align*}
and thus $\Tc(t)\leqslant 1$.

We conclude that 
\begin{align*}
	0\leqslant \Tc(t;x)\leqslant 1,\quad \textrm{a.e. } x\in \Omega,\ \  t\in[-\kt,T_{**}),
\end{align*}
which implies that
\begin{align*}
	|\eta(t,x)|\leqslant 1 + \sup_{x\in\Omega}|1-\frac{x_2}{l}|\leqslant 2,
\end{align*}
and thus
\begin{align}
	\Norm{\eta(t)}_{L^2(\Omega)}\leqslant 2|\Omega|,\quad 
	\forall\  t\in[-\kt,T_{**}).
\end{align}

%%%%%%%%%%%%%%%%%%%%%%%%%%%%%%%%%%%%%%%%%%%%%%%%%%%%%%%%
%%%%%%%%%%%%%%%%%%%%%%%%%%%%%%%%%%%%%%%%%%%%%%%%%%%%%%%%
%%%%%%%%%%%%%%%%%%%%%%%%%%%%%%%%%%%%%%%%%%%%%%%%%%%%%%%%
%\input{acknowledgement.tex}
\section{Acknowledgments}   
The work of Y. Cao was supported in part by National Science Foundation grant DMS-1418911, that of M.S. Jolly by  NSF grant DMS-1818754.
The work of E.S. Titi was supported
in part by the Einstein Visiting
Fellow Program, and by the John Simon Guggenheim Memorial Foundation.
%Y. Cao also acknowledges the hospitality of the Department of Mathematics at Texax A \& M University where part of this work was done. 

%%%%%%%%%%%%%%%%%%%%%%%%%%%%%%%%%%%%%%%%%%%%%%%%%%%%%%%%
%%%%%%%%%%%%%%%%%%%%%%%%%%%%%%%%%%%%%%%%%%%%%%%%%%%%%%%%
%%%%%%%%%%%%%%%%%%%%%%%%%%%%%%%%%%%%%%%%%%%%%%%%%%%%%%%%
\bibliographystyle{abbrv}
\bibliography{CJTref}

\begin{thebibliography}{10}

\bibitem{azouani2014continuous}
A.~Azouani, E.~Olson, and E.~S. Titi.
\newblock Continuous data assimilation using general interpolant observables.
\newblock {\em J. Nonlinear Sci.}, 24(2):277--304, 2014.

\bibitem{Azouani2013feedback}
A.~Azouani and E.~S. Titi.
\newblock Feedback control of nonlinear dissipative systems by finite
  determining parameters---a reaction-diffusion paradigm.
\newblock {\em Evol. Equ. Control Theory}, 3(4):579--594, 2014.

\bibitem{Bai2017DetForm}
L.~Bai and M.~Yang.
\newblock A determining form for a nonlocal system.
\newblock {\em Adv. Nonlinear Stud.}, 17(4):705--713, 2017.

\bibitem{Biswas2018Down}
A.~Biswas, C.~Foias, C.~F. Mondaini, and E.~S. Titi.
\newblock Downscaling data assimilation algorithm with applications to
  statistical solutions of the {N}avier--{S}tokes equations.
\newblock {\em Ann. Inst. H. Poincar\'{e} Anal. Non Lin\'{e}aire},
  36(2):295--326, 2019.

\bibitem{Brezis1980Nonlinear}
H.~Br\'{e}zis and T.~Gallouet.
\newblock Nonlinear {S}chr\"{o}dinger evolution equations.
\newblock {\em Nonlinear Anal.}, 4(4):677--681, 1980.

\bibitem{CaoThesis}
Y.~Cao.
\newblock {\em Determining form and data assimilation algorithm for the 2D
  {R}ayleigh-{B}\'enard problem}.
\newblock PhD thesis, Indiana University, 2019.

\bibitem{Cao2018Algebraic}
Y.~Cao, M.~S. Jolly, E.~S. Titi, and J.~P. Whitehead.
\newblock Algebraic bounds on the {R}ayleigh-{B}\'enard attractor.
\newblock arXiv:1905.01399 [math.AP], 2019.

\bibitem{Celik2018Spectral}
E.~Celik, E.~Olson, and E.~S. Titi.
\newblock Spectral {F}iltering of {I}nterpolant {O}bservables for a
  {D}iscrete-in-{T}ime {D}ownscaling {D}ata {A}ssimilation {A}lgorithm.
\newblock {\em SIAM J. Appl. Dyn. Syst.}, 18(2):1118--1142, 2019.

\bibitem{Cockburn1995determining}
B.~Cockburn, D.~A. Jones, and E.~S. Titi.
\newblock Determining degrees of freedom for nonlinear dissipative equations.
\newblock {\em C. R. Acad. Sci. Paris S\'er. I Math.}, 321(5):563--568, 1995.

\bibitem{constantin1988navier}
P.~Constantin and C.~Foias.
\newblock {\em Navier-{S}tokes equations}.
\newblock Chicago Lectures in Mathematics. University of Chicago Press,
  Chicago, IL, 1988.

\bibitem{constantin2012integral}
P.~Constantin, C.~Foias, B.~Nicolaenko, and R.~Temam.
\newblock {\em Integral manifolds and inertial manifolds for dissipative
  partial differential equations}, volume~70.
\newblock Springer Science \& Business Media, 2012.

\bibitem{Farhat2015continuous}
A.~Farhat, M.~S. Jolly, and E.~S. Titi.
\newblock Continuous data assimilation for the 2{D} {B}\'enard convection
  through velocity measurements alone.
\newblock {\em Phys. D}, 303:59--66, 2015.

\bibitem{Farhat2017}
A.~Farhat, E.~Lunasin, and E.~S. Titi.
\newblock Continuous data assimilation for a 2{D} {B}\'enard convection system
  through horizontal velocity measurements alone.
\newblock {\em J. Nonlinear Sci.}, 27(3):1065--1087, 2017.

\bibitem{Foiac-s1967Sur}
C.~Foia\c{s} and G.~Prodi.
\newblock Sur le comportement global des solutions non-stationnaires des
  \'equations de {N}avier-{S}tokes en dimension {$2$}.
\newblock {\em Rend. Sem. Mat. Univ. Padova}, 39:1--34, 1967.

\bibitem{foias2012determining}
C.~Foias, M.~S. Jolly, R.~Kravchenko, and E.~S. Titi.
\newblock A determining form for the two-dimensional {N}avier-{S}tokes
  equations: the {F}ourier modes case.
\newblock {\em J. Math. Phys.}, 53(11):115623, 30, 2012.

\bibitem{foias2014unified}
C.~Foias, M.~S. Jolly, R.~Kravchenko, and E.~S. Titi.
\newblock A unified approach to determining forms for the 2{D}
  {N}avier-{S}tokes equations---the general interpolants case.
\newblock {\em Russian Mathematical Surveys}, 69(2):359, 2014.

\bibitem{Foias2017One}
C.~Foias, M.~S. Jolly, D.~Lithio, and E.~S. Titi.
\newblock One-dimensional parametric determining form for the two-dimensional
  {N}avier-{S}tokes equations.
\newblock {\em J. Nonlinear Sci.}, 27(5):1513--1529, 2017.

\bibitem{foias2001navier}
C.~Foias, O.~Manley, R.~Rosa, and R.~Temam.
\newblock {\em Navier-{S}tokes equations and turbulence}, volume~83 of {\em
  Encyclopedia of Mathematics and its Applications}.
\newblock Cambridge University Press, Cambridge, 2001.

\bibitem{foias1987attractors}
C.~Foias, O.~Manley, and R.~Temam.
\newblock Attractors for the {B}\'enard problem: existence and physical bounds
  on their fractal dimension.
\newblock {\em Nonlinear Anal.}, 11(8):939--967, 1987.

\bibitem{foiasNicolaenko1988inertial}
C.~Foias, B.~Nicolaenko, G.~R. Sell, and R.~Temam.
\newblock Inertial manifolds for the {K}uramoto-{S}ivashinsky equation and an
  estimate of their lowest dimension.
\newblock {\em J. Math. Pures Appl.}, 67(3):197--226, 1988.

\bibitem{Foias1988Inertial}
C.~Foias, G.~R. Sell, and R.~Temam.
\newblock Inertial manifolds for nonlinear evolutionary equations.
\newblock {\em J. Differential Equations}, 73(2):309--353, 1988.

\bibitem{Jolly2018DeterminingSQG}
M.~S. Jolly, V.~R. Martinez, T.~Sadigov, and E.~S. Titi.
\newblock A determining form for the subcritical surface quasi-geostrophic
  equation.
\newblock {\em J Dyn Diff Equat}, 2018.
  \url{https://doi.org/10.1007/s10884-018-9652-4}.

\bibitem{Jolly2015determiningNLS}
M.~S. Jolly, T.~Sadigov, and E.~S. Titi.
\newblock A determining form for the damped driven nonlinear {S}chr\"odinger
  equation---{F}ourier modes case.
\newblock {\em J. Differential Equations}, 258(8):2711--2744, 2015.

\bibitem{Jolly2017DeterminingKdV}
M.~S. Jolly, T.~Sadigov, and E.~S. Titi.
\newblock Determining form and data assimilation algorithm for weakly damped
  and driven {K}orteweg--de {V}ries equation---{F}ourier modes case.
\newblock {\em Nonlinear Anal. Real World Appl.}, 36:287--317, 2017.

\bibitem{Jones1993Upper}
D.~A. Jones and E.~S. Titi.
\newblock Upper bounds on the number of determining modes, nodes, and volume
  elements for the {N}avier-{S}tokes equations.
\newblock {\em Indiana Univ. Math. J.}, 42(3):875--887, 1993.

\bibitem{Paret1988Inertial}
J.~Mallet-Paret and G.~R. Sell.
\newblock Inertial manifolds for reaction diffusion equations in higher space
  dimensions.
\newblock {\em J. Amer. Math. Soc.}, 1(4):805--866, 1988.

\bibitem{temam2012infinite}
R.~Temam.
\newblock {\em Infinite-dimensional dynamical systems in mechanics and
  physics}, volume~68 of {\em Applied Mathematical Sciences}.
\newblock Springer-Verlag, New York, second edition, 1997.

\bibitem{Titi1987Criterion}
E.~S. Titi.
\newblock On a criterion for locating stable stationary solutions to the
  {N}avier-{S}tokes equations.
\newblock {\em Nonlinear Anal.}, 11(9):1085--1102, 1987.

\end{thebibliography}

\end{document}